\begin{document}
\newtheorem{thm}{Theorem}[section]
\newtheorem{cor}[thm]{Corollary}
\newtheorem{lem}[thm]{Lemma}
\newtheorem{coranddef}[thm]{Corollary and Definition}
\newtheorem*{conj}{Conjecture}
\newtheorem*{thm*}{Theorem}
\theoremstyle{definition}
\newtheorem{defn}[thm]{Definition}
\newtheorem{nota}[thm]{Notation}
\newtheorem*{conv*}{Convention}
\newtheorem{exa}[thm]{Example}
\newtheorem*{subl*}{Sublemma}
\theoremstyle{remark}
\newtheorem{rem}[thm]{Remark}
\newcommand{\ootimes}{\otimes\cdots\otimes}
\newcommand{\ooplus}{\oplus\cdots\oplus}
\newcommand{\lm}{\lambda}
\newcommand{\bbz}{\mathbb{Z}}
\title{Footnotes to a Paper of Domokos, I}
\author{Allan Berele\\ Department of Mathematics\thanks{Partially supported by a Faculty Research Grant from DePaul University}\\ DePaul University\\ Chicago, IL 60614}
\maketitle
\begin{abstract}{We use Regev's double centralizer theorem from~\cite{R} to study invariants of matrices with
group actions.  This theory then has applications to trace identities and Poincar\'e series.}\end{abstract}
{\bf Keywords: }{Trace identities, invariant theory, Poincar\'e series, double centralizer}

\smallskip
Although our constructions will work over any field, the double centralizer theorem
requires characteristic zero and we will take $F$ to be a characteristic zero field
throughout.

In \cite{R} Regev proved the following double centralizer theorem. Let $V$ be a finite
dimensional vector space over a characteristic zero field~$F$, let $A$
be a semisimple subalgebra of $GL(V)$, and let $G$ be the
group of units in $End_A(V)$.  Then
Regev proved that $G$ and the wreath product $A\sim S_k$ have the
double centralizer property in $End(V^{\otimes n})$.  Knowing how
the classical Schur-Weyl duality had been used to study invariant
theory and trace identities, Domokos applied Regev's theory to study
the invariant theory of matrices broken into rectangular blocks.  These
correspond to the case of $V=V_1\ooplus V_t$ and $G=GL(V_1)
\times\cdots\times GL(V_t)$, and Domokos explored the invariant theory
and the trace identities corresponding to~$G$.  We will describe
Domokos's theory in more detail in the course of this paper.

Although one could hardly fault Domokos for choosing the most
natural and important applications of Regev's theory to investigate,
there are a number of applications he didn't consider we believe
worth exploring.  These include
\begin{itemize}
\item Computing the Poincar\'e series of the universal algebras, 
especially using complex integrals; and computing the pure
and mixed trace cocharacters.
\item Generalizing Procesi's embedding theorem from~\cite{P}.
\item Dealing with other $G\subseteq GL(V)$.
\end{itemize}
Before jumping into the body of the paper we give an example of
the last two application, which seems especially novel.  Let
$GL_2(F)$ embed into $M_6(F)$ via $$a\longrightarrow\begin{pmatrix}
a&0&0\\ 0&a&0\\ 0&0&a\end{pmatrix} .$$
Let $A\cong M_3(F)$ be the commuting ring.
We form the free $A$-algebra $A\langle X\rangle$ in which elements 
of~$X$ are assumed to commute with elements of $F$ but not with elements of $A$.
This would be the set of $A$-polynomials and we could ask
about $A$-identities for $M_6(F)$.  However, our interest will
 be in the pure or mixed trace identities.  We enlarge $A\langle
X\rangle$ to an algebra with trace in the natural way to form
$A$-trace polynomials, and get these two
theorems.  
\begin{thm*} Let $\Phi:M_6(F)^n\rightarrow F$ be invariant under
simultaneous conjugation from $GL_2(F)$, considered as a subring of
$ M_6(F)$ as above.  Then
$\Phi$ is equal to a pure trace $M_3(F)$ polynomial.  This means that
the algebra of invariant maps is generated by the maps
$$(x_1,\ldots,x_n)\longrightarrow tr(a_1x_{i_1}\cdots a_kx_{i_k}),$$
where $1\le i_\alpha\le n$ and the $a_i\in A$. Likewise, the
mixed trace $M_3(F)$ polynomials give all invariants functions
from $M_6(F)^n$ to $M_6(F)$.
\end{thm*}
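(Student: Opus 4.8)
The plan is to treat this as a Schur--Weyl/Procesi-type statement in which the group algebra $FS_n$ is replaced by the wreath product $A\sim S_n$ of Regev's theorem. Write $V=F^{6}=W\otimes U$ with $W=F^{3}$, $U=F^{2}$, the given copy of $GL_2(F)$ acting as $\mathrm{Id}_W\otimes GL(U)$ (block-diagonal with three copies of each $a$). Then $\mathrm{End}_{GL_2(F)}(V)=\mathrm{End}(W)\otimes\mathrm{Id}_U\cong M_3(F)=A$, which is semisimple, and $GL_2(F)$ is exactly the group of units of $\mathrm{End}_A(V)\cong\mathrm{End}(U)$. So Regev's theorem applies with $G=GL_2(F)$: the images in $\mathrm{End}(V^{\otimes n})$ of $G$ and of $A\sim S_n$ are mutual centralizers, and the same holds with $n$ replaced by any $N$.

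Next I would reduce the pure-trace assertion to multilinear invariants. Over a characteristic zero field, polarization and restitution express every polynomial $G$-invariant $M_6(F)^n\to F$ as a specialization (obtained by identifying variables, up to a nonzero scalar) of a multilinear $G$-invariant $M_6(F)^m\to F$ for some $m\ge n$; and identifying variables sends a pure trace $M_3(F)$-polynomial to another one. So it suffices to show each multilinear $G$-invariant is a pure trace $M_3(F)$-polynomial. A multilinear map $M_6(F)^m\to F$ lies in $(\mathrm{End}(V)^{\otimes m})^{*}$; the nondegenerate $G$-invariant trace form identifies $\mathrm{End}(V)$ with $\mathrm{End}(V)^{*}$, hence $(\mathrm{End}(V)^{\otimes m})^{*}\cong\mathrm{End}(V)^{\otimes m}\cong\mathrm{End}(V^{\otimes m})$ as $G$-modules, $g$ acting by conjugation by $g^{\otimes m}$. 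Thus the multilinear $G$-invariants form the space $\mathrm{End}_G(V^{\otimes m})$, which by the double centralizer property is precisely the image of $A\sim S_m$.

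It remains to read off what that image looks like as a space of functions. Each element of $A\sim S_m$ is a linear combination of operators $(a_1\ootimes a_m)\,\sigma$ with $\sigma\in S_m$ and $a_i\in A$, and under the identification above the associated multilinear invariant is, up to relabeling of the $x$'s and $a$'s,
$$(x_1,\dots,x_m)\longmapsto \mathrm{tr}_{V^{\otimes m}}\bigl((a_1\ootimes a_m)\,\sigma\,(x_1\ootimes x_m)\bigr).$$
The key computation, entirely parallel to the classical case, is that this trace factors as the product over the cycles of $\sigma$ of the ordinary traces $\mathrm{tr}_V$ of the products, read cyclically along each cycle, of the corresponding $a_j$'s and $x_j$'s interleaved; viewing each $a_j$ inside $M_6(F)$ and rotating, each factor is $\mathrm{tr}(a'_1x_{i_1}a'_2x_{i_2}\cdots a'_kx_{i_k})$, of the asserted form. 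Hence every multilinear $G$-invariant, and so by the reduction every $G$-invariant $M_6(F)^n\to F$, is a pure trace $M_3(F)$-polynomial.

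For the statement about maps $M_6(F)^n\to M_6(F)$ one runs the identical argument with one extra tensor slot reserved for the output. A multilinear $G$-equivariant (for conjugation) map $M_6(F)^m\to M_6(F)$ lies in $\bigl((\mathrm{End}(V)^{\otimes m})^{*}\otimes\mathrm{End}(V)\bigr)^G\cong(\mathrm{End}(V)^{\otimes(m+1)})^G\cong\mathrm{End}_G(V^{\otimes(m+1)})$, again the image of $A\sim S_{m+1}$; distinguishing the last slot, in $(a_0\otimes a_1\ootimes a_m)\,\sigma$ the cycle of $\sigma$ through that slot yields the $\mathrm{End}(V)$-valued factor $a_0x_{i_1}a_{i_1}x_{i_2}a_{i_2}\cdots$, a monomial in the $M_3(F)$-trace algebra, while the remaining cycles contribute scalar trace coefficients; restitution again removes multilinearity. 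I expect the main obstacle to be the careful bookkeeping in this translation step --- verifying that the trace form genuinely intertwines the conjugation picture with the wreath-product picture and that, through it, the span of the displayed trace monomials is exactly $\mathrm{End}_G(V^{\otimes n})$, neither smaller nor larger. That exactness is where both halves of Regev's double centralizer theorem are essential, the nontrivial inclusion being what guarantees that no invariant is missed.
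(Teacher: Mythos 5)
Your proposal is correct and follows essentially the same route the paper takes: this introductory theorem is the specialization to $A\cong M_3(F)$, $V=F^6$, $\mathcal{G}=GL_2(F)$ of the general Theorem~\ref{thm:atr} (pure case) and Theorem~5.11 (mixed case), and your argument --- reduce to multilinear invariants by polarization, identify $\bigl[(\mathrm{End}(V)^{\otimes m})^*\bigr]^{\mathcal{G}}$ with $\mathrm{End}_{\mathcal{G}}(V^{\otimes m})=A\sim S_m$ via Regev's double centralizer, then compute that $(a_1\ootimes a_m)\sigma$ corresponds to the trace monomial $tr_\sigma(x_1a_1,\ldots,x_ma_m)$ --- mirrors the paper's proof of Theorem~\ref{thm:atr} step for step (your use of the trace form to identify $\mathrm{End}(V)^*\cong\mathrm{End}(V)$ is the same isomorphism the paper writes via $V\otimes V^*$, and your ``extra tensor slot'' for the mixed case is the paper's observation that $f\mapsto tr(f\cdot x_{k+1})$ is a nondegenerate pairing between mixed and pure multilinear trace polynomials).
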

\begin{thm*} Let $f_{i}$, $i=1,2,3$ be image of the matrix unit
$e_{ii}\in M_3(F)\equiv A$ in $ M_6(F)$, so that $f_i M_6(F)f_i$ is isomorphic to 
$M_2(F)$; and let $CH$ be the degree two Cayley-Hamilton
function, so that $CH$ will be an identity for each
 $f_i M_6(F)f_i$.  Then all $A$ trace identities of $M_6(F)$
will be consequences of these three.
\end{thm*}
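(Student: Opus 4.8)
The statement is a ``second fundamental theorem'' for $A$-trace identities of $M_6(F)$, exactly parallel to the theorem of Procesi and Razmyslov that every trace identity of $M_n(F)$ is a consequence of the Cayley--Hamilton identity. Accordingly the plan is to run the Procesi--Razmyslov argument, with Regev's double centralizer theorem in the role of classical Schur--Weyl duality and the first theorem above (the first fundamental theorem) supplying the generators. Write $V=F^6=F^3\otimes F^2$, with $A\cong M_3(F)$ acting on the first factor and $G=GL_2(F)$ on the second; then $M_6(F)\cong M_3(F)\otimes M_2(F)$ and $f_iM_6(F)f_i=(e_{ii}M_3(F)e_{ii})\otimes M_2(F)=Fe_{ii}\otimes M_2(F)\cong M_2(F)$. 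By the first theorem together with Regev's theorem, the multilinear $A$-trace functions on $M_6(F)^n$ of degree $k$ are exactly the image of the wreath product $A\wr S_k$ acting on $V^{\otimes k}$ (with the customary extra marked tensor factor in the matrix-valued case), and the multilinear $A$-trace identities of degree $k$ are exactly the kernel of the corresponding representation $\rho_k\colon A\wr S_k\to\mathrm{End}(V^{\otimes k})$. Just as in Procesi's theory, ``being a consequence'' of a family of identities matches ``lying in the two-sided ideal generated by the family, compatibly across all $k$''; this part is formal once the general machinery of the paper is in place.

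Next I would compute $\ker\rho_k$. Since $A=M_3(F)$ acts faithfully on $F^3$ while $G$ acts only on the $F^2$-factors, $\mathrm{End}_G(V^{\otimes k})=\mathrm{End}((F^3)^{\otimes k})\otimes\overline{FS_k}$, where $\overline{FS_k}$ denotes the image of $FS_k$ in $\mathrm{End}((F^2)^{\otimes k})$. By classical Schur--Weyl duality for $GL_2$, the kernel of $FS_k\to\overline{FS_k}$ is the two-sided ideal of $FS_k$ generated by the full antisymmetrizer $\alpha=\sum_{\sigma\in S_3}\mathrm{sgn}(\sigma)\sigma$ on three letters (viewed inside $S_k$), the partitions that die being exactly those with more than two rows. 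Keeping track of the $A^{\otimes k}\cong M_{3^k}(F)$ factor, one then identifies $\ker\rho_k$ with the two-sided ideal of $A\wr S_k$ generated by $\alpha$. Hence every $A$-trace identity of $M_6(F)$ is a consequence of the single ``fundamental $A$-trace identity'' attached to $\alpha$, which, the relevant multiplicity space having dimension two, is of degree-two Cayley--Hamilton type. Equivalently, and more transparently: $M_6(F)\cong M_3(M_2(F))$ with $A$ the scalar $3\times 3$ matrices, an $A$-trace identity of $M_3(M_2(F))$ is entrywise an ordinary trace identity of $M_2(F)$, and so this step is nothing but Procesi's second fundamental theorem for $M_2(F)$.

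What remains is to translate $\alpha$ into the three stated identities, and this is where the real work lies. Expand $\alpha=1_A^{\otimes 3}\,\alpha\,1_A^{\otimes 3}$ using the Peirce decomposition $1_A=f_1+f_2+f_3$ with $f_i=e_{ii}$. The ``diagonal'' summands $f_i^{\otimes 3}\alpha f_i^{\otimes 3}$ correspond, under the dictionary of the first paragraph, precisely to $CH$ evaluated on $f_ix_1f_i,f_ix_2f_i$: the relations $f_iAf_i=Ff_i$ collapse all interior $A$-coefficients to scalars, and $f_iM_6(F)f_i\cong M_2(F)$ has the degree-two Cayley--Hamilton identity as its fundamental trace identity. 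The remaining (off-diagonal) summands, in which the three Peirce indices are not all equal, I would show to be consequences of the three diagonal ones by multiplying on the left and right by matrix units $e_{ij}\in A$ and, where needed, by substituting $A$-trace polynomials for the variables; conversely each $CH(f_ix_1f_i,f_ix_2f_i)$ plainly lies in $\ker\rho_k$. Therefore the two-sided ideal generated by $\alpha$ coincides with the $T$-ideal of $A$-trace identities generated by the three identities $CH(f_ix_1f_i,f_ix_2f_i)$, $i=1,2,3$, which is the assertion. The main obstacle is precisely this last bookkeeping: correctly matching the two-sided-ideal structure on $A\wr S_k$ with the $T$-ideal structure on $A$-trace identities, and verifying that the off-diagonal Peirce components of the fundamental identity are genuinely absorbed by the three corner Cayley--Hamilton identities.
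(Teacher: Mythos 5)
Your overall architecture is the right one---identify multilinear $A$-trace identities of degree $k$ with $\ker\rho_k$ where $\rho_k:A\wr S_k\to\mathrm{End}(V^{\otimes k})$, then pin down a generator of the kernel in low degree and propagate---but there is a fatal error at the central step. You claim that $\ker\rho_k$ is the two-sided ideal of $A\wr S_k$ generated by the bare antisymmetrizer $\alpha=\sum_{\sigma\in S_3}\mathrm{sgn}(\sigma)\,(1_A^{\otimes 3},\sigma)$. This is false, and in fact $\alpha$ is not even an element of $\ker\rho_3$. Under the dictionary of Definition~\ref{def:mult}, $tr_\alpha(x_1,x_2,x_3)$ is the ordinary multilinear Cayley--Hamilton polynomial $CH_3(x_1,x_2,x_3)$ with no $A$-coefficients, which is the fundamental trace identity of $2\times2$ matrices and is manifestly \emph{not} an identity of $M_6(F)$. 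Representation-theoretically the same thing: $\rho_3(\alpha)$ acts on $(F^3\otimes F^2)^{\otimes 3}$ by antisymmetrizing all three tensor slots of a six-dimensional space, i.e.\ by (six times) the projection onto $\Lambda^3 F^6\neq 0$.

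The source of the error is the sentence ``keeping track of the $A^{\otimes k}$ factor.'' The wreath product $A\wr S_k$ is not $M_{3^k}(F)\otimes FS_k$: in the representation $\rho_k$, the permutations act simultaneously on the $(F^3)^{\otimes k}$ slots and the $(F^2)^{\otimes k}$ slots, so $\rho_k$ does not split as a tensor product of an isomorphism on $M_{3^k}(F)$ with the Schur--Weyl quotient map on $FS_k$. Consequently $\ker\rho_k$ is \emph{not} $M_{3^k}(F)\otimes\ker(FS_k\to\overline{FS_k})$, and the ideal of $A\wr S_k$ generated by $\alpha$ is in fact all of $A\wr S_3$ (one checks $\alpha$ acts nonzero on every simple $A\wr S_3$-module), which would wrongly imply there are no $A$-trace polynomials at all. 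This also breaks your final Peirce-decomposition step: you want to show that the nondiagonal Peirce components of $\alpha$ are consequences of the diagonal ones $f_i^{\otimes 3}\alpha f_i^{\otimes 3}$, but components such as $(f_i\otimes f_i\otimes f_j)\alpha(f_i\otimes f_i\otimes f_j)$, corresponding to $tr(x_1f_i)tr(x_2f_i)tr(x_3f_j)-tr(x_1f_ix_2f_i)tr(x_3f_j)$, are not identities of $M_6(F)$, so they cannot be ``absorbed'' by the corner identities.

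The fix, which is what the paper actually does (Theorem~\ref{thm:rp} via Theorem~\ref{thm:ker} and the Branching Lemma~\ref{lem:branch}), is to take the generator to be not $\alpha$ but $f_i^{\otimes 3}\alpha$ (equivalently, any nonzero element of the simple ideal $I_{(1^3)}\subseteq A\wr S_3$). The element $f_i^{\otimes 3}\alpha$ corresponds exactly to the polynomial $CH(f_ix_1f_i,f_ix_2f_i,f_ix_3f_i)$, it lies in $\ker\rho_3=I_{(1^3)}$ (check: $\rho_3(f_i^{\otimes 3})$ collapses $(F^3)^{\otimes 3}$ onto the line $e_i^{\otimes 3}$, and $\alpha$ then antisymmetrizes three copies of $F^2$, giving zero), and since $I_{(1^3)}$ is simple it is generated by this one element. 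The branching lemma then shows that for $m\ge 3$ the ideals $I_\lambda\subseteq A\wr S_m$ with $\mathrm{ht}(\lambda)\ge 3$ are precisely those induced from $I_{(1^3)}$, and Lemma~\ref{lem:ind} converts ideal induction into ``consequence of identities.'' The three corner identities in the statement are all conjugate under $A$ (hence pairwise equivalent by Lemma~5.4), so in principle one already suffices.

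\end{document}
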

\begin{thm*} Let $S$ be an algebra with $A$-action and trace which satisfies
the $A$-trace identities of the previous theorem.  Then $S$ can be embedded
into $6\times6$~matrices over a commutative ring, and this embedding will
preserve both $A$-action and trace.
\end{thm*}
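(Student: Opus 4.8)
The strategy is to transplant Procesi's embedding theorem \cite{P} to the present $A$-equivariant setting, with the ordinary trace identities of $M_n(F)$ replaced by the $A$-trace identities of $M_6(F)$, the group $GL_n$ by $GL_2(F)$, and the first fundamental theorem of matrix invariants by the first theorem of this paper. First I would choose a set $X$ generating $S$ as an $A$-trace algebra and form the universal $A$-trace algebra $U=U\langle X\rangle$, namely the free $A$-trace algebra on $X$ modulo the $A$-trace identities of $M_6(F)$ (that is, modulo the ideal generated by all their substitution instances). By the previous theorem this ideal is generated by the three $A$-trace identities described there, all of which hold in $S$ by hypothesis; in particular $S$ inherits the consequences $\mathrm{tr}(1)=6$ and $\mathrm{tr}(f_i)=2$, and since $A\cong M_3(F)$ is simple the structure map $A\to S$ is injective, so the ``$A$-action'' really is an inclusion of $A$ into $S$. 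Hence $x\mapsto x$ extends to a surjection of $A$-trace algebras $U\twoheadrightarrow S$, and it suffices to prove: (a) $U$ itself embeds, $A$- and trace-preservingly, into $M_6$ of a commutative ring; and (b) every $A$-trace ideal $I$ of $U$ is the kernel of an $A$-trace homomorphism from $U$ into $M_6$ of a commutative ring. Given (a) and (b), taking $S=U/I$ finishes the proof.

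For (a), I would realize $U$ as a ring of generic $A$-matrices. Write $M_6(F)=M_2(F)\otimes_F M_3(F)$ so that the distinguished copy of $GL_2(F)$ is $GL_2(F)\otimes 1$ and $A=1\otimes M_3(F)$, let $R=F[\,y^{(x)}_{IJ}: x\in X,\ 1\le I,J\le 6\,]$ be the coordinate ring of $M_6(F)^{X}$, and let $\Xi_x=\bigl(y^{(x)}_{IJ}\bigr)\in M_6(R)$ be the generic matrices. Sending $x\mapsto\Xi_x$, $a\mapsto 1\otimes a$ for $a\in A$, and $\mathrm{tr}\mapsto\mathrm{tr}_{M_6(R)}$ defines a homomorphism from the free $A$-trace algebra into $M_6(R)$; since $F$ is infinite, an element of its kernel vanishes as a function on $M_6(F)^{X}$, hence is an $A$-trace identity of $M_6(F)$, so the kernel is exactly what we mod out by, and we obtain an embedding $U\hookrightarrow M_6(R)$ whose image, by the first theorem of this paper, is precisely the algebra of $GL_2(F)$-equivariant polynomial maps $M_6(F)^{X}\to M_6(F)$. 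This embedding fixes $A=1\otimes M_3(F)$ and preserves the trace; and since the restriction of $\mathrm{tr}_{M_6}$ to each corner $f_iM_6(F)f_i=M_2(F)\otimes Fe_{ii}\cong M_2(F)$ is the $2\times 2$ trace, the three degree-two Cayley--Hamilton relations are respected, so the picture is internally consistent.

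For (b), view $U\subseteq M_6(R)$ as above, with $GL_2(F)$ acting on $M_6(R)$ by conjugating matrices and pulling back coordinate functions, so that $M_6(R)^{GL_2(F)}=U$ and, by the first theorem, $R^{GL_2(F)}=B$ is the commutative ring of pure trace $A$-polynomials. Given an $A$-trace ideal $I$ of $U$, one shows, following Procesi and using the Cayley--Hamilton relations, that $I$ is recovered from its traces: $I=U\cdot\mathrm{tr}(I)$, where $\mathrm{tr}(I)=\{\mathrm{tr}(i): i\in I\}$ is an ideal of $B$. Then $\mathfrak a:=R\cdot\mathrm{tr}(I)$ is a $GL_2(F)$-stable ideal of $R$, the composite $U\hookrightarrow M_6(R)\to M_6(R/\mathfrak a)$ is an $A$-trace homomorphism, and its kernel $U\cap M_6(\mathfrak a)$ equals $I$: the inclusion $I\subseteq U\cap M_6(\mathfrak a)$ is clear, and applying the Reynolds operator of $GL_2(F)$ to an element of $M_6(\mathfrak a)=M_6(R)\cdot\mathrm{tr}(I)$, using that $\mathrm{tr}(I)\subseteq B$ consists of invariants, gives $U\cap M_6(\mathfrak a)=U\cdot\mathrm{tr}(I)=I$. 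Hence $S=U/I$ embeds, $A$- and trace-preservingly, into $M_6(R/\mathfrak a)$, a commutative ring. I expect the main obstacle to be exactly the structural statement $I=U\cdot\mathrm{tr}(I)$ --- i.e.\ checking that Procesi's description of the trace ideals of the universal trace algebra goes through for the $A$-equivariant $U$; everything else (linear reductivity of $GL_2(F)$, the Reynolds-operator computation, the identification $B=R^{GL_2(F)}$ provided by the first theorem) is routine, the $A$-action being carried along passively because $A$ sits as the same scalar subalgebra $1\otimes M_3$ inside $M_6(K)$ for every commutative $F$-algebra $K$ and is fixed by all of the homomorphisms in play.
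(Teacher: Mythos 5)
Your proposal is correct and follows essentially the same route as the paper: it is Procesi's embedding argument transplanted to the $A$-equivariant setting, using the identification of the universal algebra $U$ with $M_6(R)^{GL_2(F)}$, the linear reductivity of $GL_2(F)$, and a Reynolds-operator computation to show each $A$-trace ideal of $U$ is the kernel of an $A$-trace map into $M_6$ of a commutative ring. The paper merely cites Procesi's three ingredients and declares that the theorem follows immediately; you have usefully spelled out the reduction to $A$-trace ideals of $U$ and the verification $U\cap M_6(\mathfrak a)=U\cdot\mathrm{tr}(I)=I$, details the paper leaves to the reader.
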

In a subsequent paper we hope to explore the $\mathbb{Z}_2$-graded
analogues of the topics in \cite{D} and the current paper.

\section{$E$-Polynomials}
The first case we consider,
following Domokos, will be algebras $R$ with~1,
 in which $1=e_1+\cdots+e_t$, a specified sum of orthogonal 
idempotents.  Let $F\langle X,E\rangle$ be the free algebra
on the set $X\cup E$ where $X$ is a set of indeterminants
and $E$ is the set $\{E_1,\ldots,
E_t\}$ modulo the relations that the $E_i$ are orthogonal
idempotents summing to~1.  Note that $F\langle X,E\rangle$ is
spanned by the monomials $$E_{i_1}X_{j_1}E_{i_2}X_{j_2}\cdots X_{j_s}E_{j_{s+1}}.$$
  This algebra has the universal
property that given any algebra $R$ with b$t$ orthogonal idempotents summing to~1, 
as above, and given any set theoretic map $f:X\rightarrow R$ there
is a unique extension of $f$ to a homomorphism $F\langle X,E\rangle\rightarrow R$ taking each $E_i$ to $e_i$.
Considering the elements of $F\langle X,E\rangle$ to be
$E$-polynomials, we have the natural concepts of $E$-polynomial
identities.  For example, if $R=M_2(F)$, $e_1=e_{11}$
and $e_2=e_{22}$ then the following would be $E$-polynomial
identities:  $$ E_1X_1E_1X_2E_1=E_1X_2E_1X_1E_1,\qquad E_1X_1E_2X_2E_1=E_2X_2E_1X_1E_2.$$
As usual in p.i.~theory the set of identities of an algebra~$R$ is an ideal $Id(R)$ in
$F\langle X,E\rangle$, and the quotient is a universal algebra $U(R)$.  In the
example of $2\times2$ matrices we have been considering, the 
universal algebra can be thought of as either the  algebra generated
by the ordinary  
generic matrice together with $e_{11}$ and $e_{22}$; or as the
algebra generated by all $e_{ii}X_\alpha e_{jj}$:
$$\begin{pmatrix}x_{11}^{(\alpha)}&0\\0&0\end{pmatrix},\ \begin{pmatrix}0&x_{12}^{(\alpha)}
\\0&0\end{pmatrix},\ \begin{pmatrix}0&0\\x_{21}^{(\alpha)}&0\end{pmatrix},\ \begin{pmatrix}
0&0\\0&x_{22}^{(\alpha)}\end{pmatrix}$$
together with $e_{11}$ and $e_{22}$.  We point out here that
our setting is slightly different from Domokos's.  He did not
use the language of $E$-identities and his generic algebra
would have included the $e_{ii}X_\alpha e_{jj}$ but not the
idempotents $e_{ii}$ themselves.

In this paper our main interest will be in trace identities rather than identities. 
In the standard manner we generalize $F\langle X,E\rangle$ to
$F\langle X,E,tr\rangle$ the generic algebra with orthogonal
idempotents and trace.  We note that, because of the
orthogonality of the idempotents, in order for the trace
of a monomial $E_{i_1}x_{j_1}\cdots x_{j_m}E_{i_{m+1}}$
to be non-zero we must have $i_1=i_{m+1}$ and the trace
of the monomial will be
$$tr(E_{i_1}x_{j_1}\cdots E_{i_m}x_{j_m}).$$
It will be convenient to assume that we are given positive
integers~$n_i$ and that the universal algebra satisfies
the identities $tr(E_i)=n_i$ for all~$i$ and so
$tr(1)=\sum n_i$.  Hence, every trace term can be assumed
to have at least one $x_j$ in it.

The ideal of trace identities of~$R$ will be denoted
$Id(R,tr)$, the ideal of pure trace identities
will be denoted $Id(R,ptr)$, and the corresponding
generic algebras as $U(R,tr)$ and $U(R,ptr)$.  In the case of
matrices, $U(M_n(F),ptr)$ will be the subalgebra of the
polynomial ring $F[x_{ij}^{(\alpha)}]_{ij\alpha}$ generated
by the traces of the elements of $U(M_n(F))$,
$$tr(e_{i_1}X_{j_1}\cdots e_{i_m} X_{j_m}).$$
(The $X$ are now capitalized because they are generic
matrices, while the $e$ are lower case to denote specific
idempotents in~$M_n(F)$.)  And, as in the classical case,
$U(M_n(F),tr)$ will be the algebra generated by $U(M_n(F))$
and $U(M_n(F),ptr)$.
\section{Theorems of Regev and Domokos}
Regev's work in \cite{R} generalized the classic Schur-Weyl duality.  Let $V$ be an $n$~dimensional vector space and let $W$ be the tensor power $V^{\otimes k}$.
Then $W$ is a module for the symmetric group via
$$\sigma(v_1\otimes\cdots\otimes v_k)=v_{\sigma^{-1}(1)}\ootimes 
v_{\sigma^{-1}(k)}$$
and the general linear group acts on $W$ by the diagonal action
$$g(v_1\ootimes v_k)=gv_1\ootimes gv_k.$$
It is not hard to see that these two actions commute with each other.  It is less easy
to see, but a theorem with applications in many areas, that each spans the commutator of the other in
$End(W)$.  In Regev's generalization, we start off with a
group $G$ that is a finite direct product of matrix groups
$$G=GL_{n_1}\times\cdots\times GL_{n_t}$$ that acts
on~$V$.  Let
$A$ be the commutator algebra of $G$ in $End(V)$.  The 
group $G$ and the wreath product $A\sim S_k$ act on
$W=V^{\otimes k}$.  Regev proved the double centralizer theorem
for these actions.  Moreover, the algebra $A\sim S_k$ is
semisimple and so decomposes into a sum of simple two-sided
ideals $\oplus I_{\langle\lambda\rangle}$ indexed by
$t$-tuples of partitions, $\langle\lambda\rangle=(\lambda_1,
\ldots,\lambda_t)$; and $I_{\langle\lambda\rangle}$ acts
as zero on~$W$ precisely when some $\lambda_i$ has 
height greater than~$n_i$.

In \cite{D} Domokos applied Regev's theorem to the trace identities of matrices much as Procesi
had applied Schur-Weyl.  He restricted his attention to the case
in which $G=GL(V_1)\oplus\cdots\oplus GL(V_t)$ acts
in the obvious way on $V=V_1\oplus\cdots\oplus V_t$.  In
this case the centralizer algebra $A$ is $t$ copies of the field, $A=F^t$.  
Let $1\in A\subseteq End(V)$ decompose as $e_1+\cdots+e_t$, where 
the $e_i$ are orthogonal idempotents and each
$e_i$ has rank equal to the dimension of~$V_i$.  Here are
Domokos' analogues of the classic theorems of matrix
invariants:
\begin{thm} The pure $E$-trace polynomials give all  functions from $M_n(F)^d$ to the
field~$F$ invariant under simultaneous conjugation from~$G$.\label{thm:d1}
\end{thm}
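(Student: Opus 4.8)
The plan is to mimic Procesi's proof of the first fundamental theorem for matrix invariants in~\cite{P}, with Regev's double centralizer theorem for $G$ and $A\sim S_d$ taking the place of classical Schur-Weyl duality. Write $V=V_1\oplus\cdots\oplus V_t$, so that $M_n(F)=End(V)$ with $G$ acting by conjugation, and let $d$ be the number of matrix variables. First I would reduce to the multilinear case: since $\mathrm{char}\,F=0$, every $G$-invariant polynomial function on $M_n(F)^d$ is a sum of multihomogeneous ones, and by polarization and restitution each multihomogeneous invariant is the restitution of a $G$-invariant multilinear function in more variables; because the pure $E$-trace polynomials form a subalgebra stable under polarization and restitution, it is enough to show that every multilinear $G$-invariant map $M_n(F)^d\to F$ is an $F$-linear combination of multilinear pure $E$-trace monomials, that is, of products of traces $tr(e_{i_1}X_{j_1}e_{i_2}X_{j_2}\cdots e_{i_m}X_{j_m})$ in which each of $X_1,\ldots,X_d$ occurs exactly once overall.

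Next I would set up the standard tensor dictionary. As a $G$-module $M_n(F)=End(V)=V\otimes V^*$, so $M_n(F)^{\otimes d}=V^{\otimes d}\otimes(V^*)^{\otimes d}=End(V^{\otimes d})$ with $G$ acting diagonally; the nondegenerate conjugation-invariant form $\langle Y,Z\rangle=tr(YZ)$ on $End(V)$, whose $d$-th tensor power is exactly the trace form on $End(V^{\otimes d})$, identifies $End(V^{\otimes d})$ $G$-equivariantly with its own dual. Hence a multilinear map $\phi:M_n(F)^d\to F$ has the form $\phi(X_1,\ldots,X_d)=\langle T,X_1\otimes\cdots\otimes X_d\rangle$ for a unique $T\in End(V^{\otimes d})$, and $\phi$ is $G$-invariant exactly when $T$ commutes with the diagonal $G$-action, i.e.\ $T\in End_G(V^{\otimes d})$. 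By Regev's theorem, as quoted above, $End_G(V^{\otimes d})$ is spanned by the image of $A\sim S_d$ acting on $V^{\otimes d}$; and since $A=F^t$ has the idempotents $e_1,\ldots,e_t$ as a basis, that image is spanned over $F$ by the operators $(e_{j_1}\otimes\cdots\otimes e_{j_d})\,\sigma$ with $1\le j_\alpha\le t$ and $\sigma\in S_d$, where the idempotent tensor acts slotwise and $\sigma$ permutes the tensor slots.

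Finally I would read off the trace monomial attached to such an operator. The classical computation of the trace of a permutation operator composed with a tensor product of endomorphisms shows that $\langle\sigma,Y_1\otimes\cdots\otimes Y_d\rangle$ is the product, over the cycles of $\sigma$, of the traces of the corresponding cyclic words in the $Y_\alpha$; inserting the slotwise idempotent operator merely attaches the idempotent $e_{j_\alpha}$ to the variable sitting in slot $\alpha$, so $\langle(e_{j_1}\otimes\cdots\otimes e_{j_d})\sigma,\,X_1\otimes\cdots\otimes X_d\rangle$ becomes a product, over the cycles of $\sigma$, of traces of alternating words in the idempotents and the $X_\alpha$ --- precisely a multilinear pure $E$-trace monomial. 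Conversely every multilinear pure $E$-trace monomial arises this way, its cyclically ordered blocks being the cycles of $\sigma$ and the inserted idempotents being the free indices $j_\alpha$. Thus under the dictionary of the previous paragraph the spanning set of $End_G(V^{\otimes d})$ maps onto the set of multilinear pure $E$-trace monomials, which is the theorem.

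The only delicate step is the last one: one must fix conventions once and for all --- the direction of $\sigma$'s action on $V^{\otimes d}$, the order in which the idempotent tensor and $\sigma$ are composed, and whether $e_{j_\alpha}$ falls immediately before or immediately after its matrix variable in the cyclic word --- and then check that with these choices the operators coming from $A\sim S_d$ map onto (indeed, up to the evident relations, bijectively onto) the multilinear pure $E$-trace monomials. Everything else is either formal multilinear algebra or is contained in Regev's theorem, which we take as given.
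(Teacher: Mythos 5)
Your proposal is correct, and it follows essentially the same strategy the paper uses. Note that the paper states Theorem~\ref{thm:d1} without proof (citing Domokos~\cite{D}), but it does prove the generalization to arbitrary semisimple $A$ in Theorem~\ref{thm:atr}, and your argument --- reduce to the multilinear case, identify $(End(V)^{\otimes d})^*$ with $End(V^{\otimes d})$ via the trace form, invoke Regev's double centralizer theorem to span the $G$-invariants by the image of $A\sim S_d$, and then read off the corresponding multilinear pure trace monomials --- is exactly the paper's proof of Theorem~\ref{thm:atr} specialized to $A=F^t$, where the spanning operators $(a_1\otimes\cdots\otimes a_d)\sigma$ become $(e_{j_1}\otimes\cdots\otimes e_{j_d})\sigma$.
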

In the classical case the group $GL_n(F)$ has an action on the 
polynomial algebra $F[x_{ij}^{(\alpha)}]$ defined by
$g(x_{ij}^{(\alpha)})$ equals the $(i,j)$ entry of
$gX_\alpha g^{-1}$.  In the current case we restrict
this action to $G\subseteq GL_n(F)$.
\begin{thm} The pure trace ring $U(M_n(F),ptr)$, which is
the ring generated by the traces of elements of $U(M_n(F))$,
equals the fixed ring of $G$ acting on the polynomial algebra
$F[x_{ij}^{(\alpha)}]$.\label{thm:d2}
\end{thm}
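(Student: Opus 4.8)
The plan is to recognize Theorem~\ref{thm:d2} as a reformulation of Theorem~\ref{thm:d1}. First I would unwind the action of $G$ on $F[x_{ij}^{(\alpha)}]$: since $g$ sends $x_{ij}^{(\alpha)}$ to the $(i,j)$ entry of $gX_\alpha g^{-1}$, it acts on products of the $x^{(\alpha)}$ by simultaneous conjugation, so $F[x_{ij}^{(\alpha)}]^G$ is exactly the ring of polynomial functions $M_n(F)^d\to F$ invariant under simultaneous conjugation by $G$. Next, because $G=GL(V_1)\oplus\cdots\oplus GL(V_t)$ stabilizes each $V_i$, every $g\in G$ commutes with the idempotent $e_i$ projecting onto $V_i$; hence for any $E$-trace monomial
$$g\cdot tr(e_{i_1}X_{j_1}\cdots e_{i_m}X_{j_m})=tr\bigl(g\,e_{i_1}X_{j_1}\cdots e_{i_m}X_{j_m}\,g^{-1}\bigr),$$
and the right-hand side equals $tr(e_{i_1}X_{j_1}\cdots e_{i_m}X_{j_m})$ by cyclicity of the trace. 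These monomials generate $U(M_n(F),ptr)$ and the fixed ring is a subring of $F[x_{ij}^{(\alpha)}]$, so $U(M_n(F),ptr)\subseteq F[x_{ij}^{(\alpha)}]^G$. The reverse inclusion is precisely Theorem~\ref{thm:d1}: an element of $F[x_{ij}^{(\alpha)}]^G$ is a $G$-invariant polynomial function on $M_n(F)^d$, hence a pure $E$-trace polynomial, that is, a polynomial in the monomials above, that is, an element of $U(M_n(F),ptr)$. Equality follows.

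If one wants a proof not routed through~\ref{thm:d1} --- essentially the Procesi--Schur--Weyl argument with Regev's theorem in place of classical duality --- the plan would be as follows. By polarization and restitution in characteristic zero it suffices to describe, for every $k$, the multilinear $G$-invariant functions $M_n(F)^k\to F$, since pure $E$-trace polynomials are closed under identifying variables. Such a function is an element of $\bigl(End(V)^{\otimes k}\bigr)^*$, with $G$ acting diagonally by conjugation; using the $G$-invariant nondegenerate trace form on $End(V)$ to identify $End(V)\cong End(V)^*$, and the $G$-equivariant isomorphism $End(V)^{\otimes k}\cong End(V^{\otimes k})$, the space of multilinear invariants becomes the commutant $End_G(V^{\otimes k})$. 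By Regev's double centralizer theorem this commutant is the image of $A\sim S_k$ acting on $V^{\otimes k}$, where $A=Fe_1\oplus\cdots\oplus Fe_t$; and the function attached to a basis element $(e_{c_1}\ootimes e_{c_k})\sigma$, namely
$$(X_1,\ldots,X_k)\longmapsto tr_{V^{\otimes k}}\bigl((e_{c_1}\ootimes e_{c_k})\,\sigma\,(X_1\ootimes X_k)\bigr),$$
factors over the cycles of $\sigma$ into a product of traces of alternating words in the $e_{c}$ and the $X$ --- that is, a pure $E$-trace monomial. Hence every multilinear invariant is a pure $E$-trace polynomial, which with the easy inclusion again gives the theorem, and in fact proves \ref{thm:d1} along the way.

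The hard part is, in a sense, already behind us: granting Theorem~\ref{thm:d1}, Theorem~\ref{thm:d2} is only the observation that ``invariant under conjugation by $G$'' means the same thing for elements of the coordinate ring as for functions, together with the remark that the named generators of $U(M_n(F),ptr)$ are $G$-fixed. If instead one insists on the self-contained route, the genuinely delicate step is the last one above: checking that the trace on $V^{\otimes k}$ of the twisted operator $(e_{c_1}\ootimes e_{c_k})\sigma$ is exactly the right pure $E$-trace monomial --- getting the positions of the idempotents and the cyclic orderings correct --- and conversely that every such monomial arises this way, after inserting $e_1+\cdots+e_t=1$ between consecutive $X$'s so that an idempotent occupies every gap. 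One should also keep in mind that the action of $A\sim S_k$ on $V^{\otimes k}$ has a kernel (coming from partitions of excessive height), so it is the image, not the abstract algebra, that one uses.
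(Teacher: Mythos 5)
Your proposal is correct and matches the paper's treatment. The paper states Theorem~\ref{thm:d2} as Domokos's result without a local proof, but in Section~5 it proves the general-$A$ analogue, Theorem~\ref{thm:atr}, by exactly the Regev double-centralizer argument you sketch in your second paragraph, and then passes to the fixed-ring restatement of that result by exactly the reduction you give in your first paragraph (namely that a $G$-fixed element of $F[x_{ij}^{(\alpha)}]$ is the same as a $G$-invariant polynomial function on $M_n(F)^d$, and that the pure $E$-trace monomials are visibly $G$-fixed).
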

A virtue of the language of $E$-polynomials is that makes it easy to pass
from pure trace identities to mixed ones.  Let $f(x_1,\ldots,x_{k+1})$
be a multilinear pure trace $E$-polynomial.  The $f$ can be written in
the form $$f=\sum_\alpha g_\alpha tr(u_\alpha x_{k+1})=
tr(\sum_\alpha g_\alpha u_\alpha x_{k+1}),$$ where each $g_\alpha$
is a pure trace $E$-polynomial and each $u_\alpha$ is an $E$-polynomial,
and so $\sum g_\alpha u_\alpha$ is a multilinear, mixed trace
$E$-polynomial in $x_1,\ldots,x_k$.  Moreover, since the trace is
non-degenerate we get the following corollaries to Theorems~\ref{thm:d1}
and~\ref{thm:d2}.
\begin{cor} The mixed $E$-trace polynomials give all functions from $M_n(F)^d$
to $M_n(F)$ invariant under simultaneous conjugation from $G$.\label{cor:2.3}\end{cor}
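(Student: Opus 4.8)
The plan is to derive Corollary~\ref{cor:2.3} from Theorem~\ref{thm:d1} together with the non-degeneracy of the trace form on $M_n(F)$, using exactly the device of the paragraph just above. Let $\Psi\colon M_n(F)^d\to M_n(F)$ be a polynomial map equivariant under simultaneous conjugation by $G$, so that $\Psi(gx_1g^{-1},\dots,gx_dg^{-1})=g\,\Psi(x_1,\dots,x_d)\,g^{-1}$ for every $g\in G$. First I would pass to a scalar-valued invariant in one extra variable by setting
$$\Phi(x_1,\dots,x_d,x_{d+1})=tr\bigl(\Psi(x_1,\dots,x_d)\,x_{d+1}\bigr).$$
Conjugating all $d+1$ arguments by $g$, then using equivariance of $\Psi$ and conjugation-invariance of $tr$, shows $\Phi$ is invariant under simultaneous conjugation from $G$; hence Theorem~\ref{thm:d1} applies and produces a pure trace $E$-polynomial $f(x_1,\dots,x_{d+1})$ representing $\Phi$.

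Next I would arrange that $f$ be linear in $x_{d+1}$. The algebra $F\langle X,E,tr\rangle$ is graded by multidegree in the variables $X$, this grading is respected by evaluation on matrices, and $F$ is infinite; since $\Phi$ is manifestly homogeneous of degree one in $x_{d+1}$, we may discard every component of $f$ of degree $\neq 1$ in $x_{d+1}$ without changing the function it represents. Now the rewriting of the paragraph preceding the corollary applies verbatim: an $f$ linear in $x_{d+1}$ has the form $f=\sum_\alpha g_\alpha\,tr(u_\alpha x_{d+1})=tr\bigl((\sum_\alpha g_\alpha u_\alpha)\,x_{d+1}\bigr)$, with the $g_\alpha$ pure trace $E$-polynomials and the $u_\alpha$ plain $E$-polynomials. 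Put $h=\sum_\alpha g_\alpha u_\alpha$, a mixed trace $E$-polynomial in $x_1,\dots,x_d$.

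Finally, for all $(x_1,\dots,x_d)\in M_n(F)^d$ and all $x_{d+1}\in M_n(F)$ we have $tr(\Psi(x_1,\dots,x_d)\,x_{d+1})=tr(h(x_1,\dots,x_d)\,x_{d+1})$, and since $(a,b)\mapsto tr(ab)$ is a non-degenerate bilinear form on $M_n(F)$ this forces $\Psi=h$ as maps $M_n(F)^d\to M_n(F)$, which is the assertion. For completeness one should also record the easy converse, that every mixed trace $E$-polynomial map is $G$-equivariant: $G$ centralizes $A$, so each $e_i$ is fixed under conjugation by $G$, and the pure trace coefficients are $G$-invariant by Theorem~\ref{thm:d1}. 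The only step that is not purely formal is the reduction of $f$ to a polynomial linear in the last variable, i.e.\ keeping track of the $X$-grading on $F\langle X,E,tr\rangle$ and that evaluation respects it; once that is in hand, non-degeneracy of the trace does all the remaining work.
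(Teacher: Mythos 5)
Your proposal is correct and follows essentially the same route the paper intends: pass from the equivariant map $\Psi$ to the scalar invariant $\Phi(x_1,\dots,x_{d+1})=tr(\Psi(x_1,\dots,x_d)x_{d+1})$, apply Theorem~\ref{thm:d1} to realize $\Phi$ as a pure trace $E$-polynomial, invoke the rewriting $f=tr\bigl((\sum_\alpha g_\alpha u_\alpha)x_{d+1}\bigr)$, and finish with non-degeneracy of the trace form. You are slightly more careful than the paper in explicitly reducing $f$ to its component linear in $x_{d+1}$ via the $X$-grading (the paper tacitly works with multilinear components, as is standard in characteristic zero), and in recording the easy converse, but these are fill-ins of the same argument rather than a different one.
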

The group $GL_n(F)$, and by restriction the group~$G$, acts on $n\times n$ matrices
over the polynomial ring $F[x_{ij}^{(\alpha)}]$.  We consider
$$M_n(F[x_{ij}^{(\alpha)}])=F[x_{ij}^{(\alpha)}]\otimes M_n(F).$$  The action
of $GL_n(F)$ on the first factor is as above; and the action on the second
factor is via $g(a)=g^{-1}ag$. 
\begin{cor} The (mixed) trace ring $U(M_n(F),tr)$, which is
the ring generated by  $U(M_n(F))$ together with the pure trace ring
$U(M_n(F),ptr)$,
equals the fixed ring of $G$ acting on the matrix algebra
$M_n(F[x_{ij}^{(\alpha)}])$.\label{cor:2.4}\end{cor}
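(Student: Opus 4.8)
The plan is to prove the two inclusions separately. The inclusion $U(M_n(F),tr)\subseteq M_n(F[x_{ij}^{(\alpha)}])^G$ is routine, while the reverse inclusion is where the real work lies and will be obtained by the standard ``Procesi trick'': pair an equivariant map against a fresh generic matrix, apply the pure trace statement Theorem~\ref{thm:d1} (equivalently Theorem~\ref{thm:d2}), and use non-degeneracy of the trace to pull the matrix-valued map back out.

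For the easy direction it suffices to check that a set of algebra generators of $U(M_n(F),tr)$ consists of $G$-fixed elements, since the fixed ring is a subalgebra. Viewing an element of $M_n(F[x_{ij}^{(\alpha)}])$ as a polynomial map $M_n(F)^d\to M_n(F)$, the $G$-action is $(g\cdot P)(Y_1,\dots,Y_d)=g^{-1}P(gY_1g^{-1},\dots,gY_dg^{-1})g$. The generic matrix $X_\alpha$ is the map $Y\mapsto Y_\alpha$, which is visibly fixed; each $e_i$ is the projection onto $V_i$, and since every $g\in G$ preserves the decomposition $V=V_1\oplus\cdots\oplus V_t$ we have $g^{-1}e_ig=e_i$; and the elements of $U(M_n(F),ptr)$ lie in the subring of scalar matrices $F[x_{ij}^{(\alpha)}]\subseteq M_n(F[x_{ij}^{(\alpha)}])$, where by Theorem~\ref{thm:d2} they are precisely the $G$-invariants (and $G$ fixes the scalar matrix $1$). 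As $U(M_n(F),tr)$ is generated by $U(M_n(F))$, itself generated by the $X_\alpha$ and the $e_i$, together with $U(M_n(F),ptr)$, the inclusion follows.

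For the reverse inclusion, take $P\in M_n(F[x_{ij}^{(\alpha)}])^G$, i.e.\ a $G$-equivariant polynomial map with $P(gY_1g^{-1},\dots,gY_dg^{-1})=gP(Y_1,\dots,Y_d)g^{-1}$ for all $g\in G$. Adjoin one further generic matrix $X_0$ and set $\Psi(Y_0,Y_1,\dots,Y_d)=tr\bigl(Y_0\,P(Y_1,\dots,Y_d)\bigr)$. Cyclicity of the trace shows $\Psi$ is a $G$-invariant scalar-valued polynomial map on $M_n(F)^{d+1}$, so by Theorem~\ref{thm:d1} in $d+1$ variables it is a pure trace $E$-polynomial in $X_0,X_1,\dots,X_d$. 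Since $\Psi$ is homogeneous of degree one in $Y_0$, substituting $\lambda Y_0$ and comparing powers of $\lambda$ (using that $F$ is infinite) shows that the component homogeneous of degree one in $X_0$ of this pure trace polynomial again represents $\Psi$; such a component must have the form $\sum_\beta g_\beta\,tr(u_\beta X_0 v_\beta)=tr\bigl(X_0\sum_\beta g_\beta v_\beta u_\beta\bigr)$ with each $g_\beta$ a pure trace $E$-polynomial in $X_1,\dots,X_d$ and each $u_\beta,v_\beta$ an $E$-monomial. Put $W=\sum_\beta g_\beta v_\beta u_\beta$, so that, evaluated on the generic matrices, $W\in U(M_n(F),tr)$. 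Then $tr(Y_0P(Y))=tr(Y_0W(Y))$ identically; fixing $Y$ and letting $Y_0$ vary, non-degeneracy of the trace form on $M_n(F)$ forces $P(Y)=W(Y)$ for all $Y$, hence $P=W$ in $M_n(F[x_{ij}^{(\alpha)}])$ and $P\in U(M_n(F),tr)$.

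The step I expect to need the most care is the very last one: upgrading the polynomial identity $tr(Y_0P)=tr(Y_0W)$ to the equality $P=W$ of elements of $M_n(F[x_{ij}^{(\alpha)}])$. Here one must note that both sides are genuine polynomials in the matrix entries, that over the infinite field $F$ a polynomial map determines its coefficients, and that the trace form separates the points of $M_n(F)$, so that the equality of functions does upgrade to the desired algebraic identity; the homogeneity-in-$X_0$ bookkeeping needs the same care. Alternatively, the corollary can be read off from Corollary~\ref{cor:2.3} by identifying $M_n(F[x_{ij}^{(\alpha)}])^G$ with the $G$-equivariant polynomial maps $M_n(F)^d\to M_n(F)$ and observing that the mixed trace $E$-polynomials evaluate precisely onto $U(M_n(F),tr)$; the computation above is in effect the proof of that identification.
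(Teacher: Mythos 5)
Your proposal is correct and follows essentially the same route as the paper: the ``Procesi trick'' of pairing the matrix-valued invariant $P$ against a fresh generic variable to produce a scalar invariant $tr(X_0 P)$, invoking the pure-trace statement (Theorem~\ref{thm:d1}/\ref{thm:d2}) in one more variable, rewriting the linear-in-$X_0$ part as $tr(X_0 W)$ by cyclicity, and appealing to non-degeneracy of the trace form to extract $P=W$. The paper's own derivation is more telegraphic — it simply observes that a multilinear pure trace $E$-polynomial in $x_1,\ldots,x_{k+1}$ can be written as $tr\bigl(\sum_\alpha g_\alpha u_\alpha\, x_{k+1}\bigr)$ and cites non-degeneracy — but the substance coincides with your argument; your version spells out the easy inclusion and the homogeneity-in-$X_0$ bookkeeping, which is a reasonable expansion rather than a different method.
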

Finally, Domokos also generalizes the Razmyslov-Procesi theorem.  His result is based
on the multilinear Cayley-Hamilton identity, $CH_n(x_1,\ldots,x_n)$, a pure
trace polynomial equivalent to the usual Cayley-Hamilton, see~\cite{GZ}.
\begin{thm}
The ideal of pure $E$-trace identities for $M_n(F)$ is generated by
the Cayley-Hamilton identities restricted to each diagonal component
$$CH_{n_i+1}(e_{ii}X_1e_{ii},\ldots,e_{ii}X_{n_i+1}e_{ii})$$\label{thm:d3}
\end{thm}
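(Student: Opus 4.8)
The plan is to carry out, inside Regev's wreath algebra $F^t\sim S_k$ (the algebra $A\sim S_k$ of the excerpt with $A=F^t$), the Schur--Weyl argument Procesi used to prove the classical Razmyslov--Procesi theorem. First I would set up the dictionary in the style of \cite{P}: after the orthogonality reductions of Section~1, the space of multilinear pure $E$-trace polynomials in $X_1,\dots,X_k$ is naturally identified with $F^t\sim S_k$, a basis monomial $(c,\sigma)$ being the product of traces whose cycle structure is $\sigma\in S_k$ and whose gap idempotents are recorded by a coloring $c\colon\{1,\dots,k\}\to\{1,\dots,t\}$. Evaluation on $M_n(F)$ with $tr(e_i)=n_i$ amounts to forming the trace on $V^{\otimes k}$ of the image of $(c,\sigma)$ composed with $X_1\ootimes X_k$; since pure tensors span $End(V^{\otimes k})$ and the trace form there is nondegenerate, a multilinear pure $E$-trace polynomial of degree $k$ is an identity of $M_n(F)$ if and only if it lies in the kernel $K_k$ of the representation $F^t\sim S_k\to End(V^{\otimes k})$ of Regev's double centralizer theorem, and by the decomposition quoted from \cite{R}, $K_k=\bigoplus I_{\langle\lambda\rangle}$ over the $t$-tuples $\langle\lambda\rangle\vdash k$ with $\operatorname{ht}(\lambda_i)>n_i$ for some~$i$. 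Finally, exactly as in \cite{P} (the $E$-analogue being routine), the multilinear degree-$k$ part of the pure trace $T$-ideal generated by a set of multilinear pure $E$-trace identities equals the two-sided ideal of $F^t\sim S_k$ generated by the images of those identities under the substitution embeddings $F^t\sim S_m\hookrightarrow F^t\sim S_k$. Hence the theorem reduces to the purely algebraic claim: for every $k$, the two-sided ideal $\mathfrak{z}_k\subseteq F^t\sim S_k$ generated by the images of
\[
z_i \;=\; e_{i\cdots i}\sum_{\sigma\in S_{n_i+1}}\operatorname{sgn}(\sigma)\,\sigma \;\in\; F^t\sim S_{n_i+1}, \qquad i=1,\dots,t,
\]
equals $K_k$; here $z_i$ is the dictionary image of $CH_{n_i+1}(e_{ii}X_1e_{ii},\dots,e_{ii}X_{n_i+1}e_{ii})$, since $CH_m$ is a nonzero scalar times the full antisymmetrizer of $FS_m$ (see \cite{GZ}) and flanking each variable by $e_{ii}$ forces every gap color to be~$i$.

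The inclusion $\mathfrak{z}_k\subseteq K_k$ is immediate: $z_i$ acts on $V^{\otimes(n_i+1)}=\bigoplus_c V_{c(1)}\ootimes V_{c(n_i+1)}$ by first projecting onto the summand $V_i^{\otimes(n_i+1)}$ (the effect of $e_{i\cdots i}$) and then antisymmetrizing, which is zero since $\dim V_i=n_i<n_i+1$; so $z_i\in K_{n_i+1}$, and because $(K_k)_k$ is plainly closed under the substitution embeddings, the ideal it generates contains each $z_i$.

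For the reverse inclusion I would decompose $F^t\sim S_k$ by color content. For each content vector $(k_1,\dots,k_t)$ with $\sum k_j=k$, choosing the standard coloring $c_0$, one has $e_{c_0}(F^t\sim S_k)e_{c_0}\cong F[S_{k_1}]\ootimes F[S_{k_t}]$ because $\operatorname{Stab}_{S_k}(c_0)$ is the corresponding Young subgroup; hence $F^t\sim S_k\cong\bigoplus_{(k_1,\dots,k_t)}M_{m}\bigl(F[S_{k_1}]\ootimes F[S_{k_t}]\bigr)$ with $m=k!/(k_1!\cdots k_t!)$, and under this isomorphism $I_{\langle\lambda\rangle}$ is the full matrix algebra over the simple component $S^{\lambda_1}\ootimes S^{\lambda_t}$. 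I then trace the image of $z_i$ in a content block $(k_1,\dots,k_t)$: its component there vanishes unless $k_i\ge n_i+1$, and when $k_i\ge n_i+1$, after conjugating by a suitable permutation so that the $n_i+1$ antisymmetrized positions become color-$i$ positions of $c_0$, multiplication on both sides by $e_{c_0}$ yields the corner element $1\ootimes a_{n_i+1}\ootimes 1$, with the antisymmetrizer $a_{n_i+1}$ sitting in the $i$-th factor $F[S_{k_i}]$ and acting on $n_i+1$ of its $k_i$ letters. Since any $n_i+1$ of those letters are $S_{k_i}$-conjugate and the matrix units of the block are available, the two-sided ideal generated by the image of $z_i$ meets this content block in $M_m$ of the ideal $F[S_{k_1}]\ootimes J_i\ootimes F[S_{k_t}]$, where $J_i\subseteq F[S_{k_i}]$ is the two-sided ideal generated by the antisymmetrizer on $n_i+1$ letters; and by classical Schur--Weyl duality $J_i$ is the kernel of $F[S_{k_i}]\to End(V_i^{\otimes k_i})$, namely the sum of the blocks $S^\mu$ with $\operatorname{ht}(\mu)>n_i$. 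Therefore the ideal generated by the image of $z_i$ meets the content block $(k_1,\dots,k_t)$ in exactly $\bigoplus_{\operatorname{ht}(\lambda_i)>n_i}I_{\langle\lambda\rangle}$; summing over content vectors, the ideal generated by $z_i$ alone is $\bigoplus_{\operatorname{ht}(\lambda_i)>n_i}I_{\langle\lambda\rangle}$, and summing over $i=1,\dots,t$ yields $\mathfrak{z}_k=K_k$.

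I expect the main obstacle to be the first and last links of the dictionary rather than the block computation, which once set up is the classical statement applied one tensor factor at a time. Establishing that the pure $E$-trace $T$-ideal generated by the $CH_{n_i+1}$'s---closed under arbitrary substitutions of $E$-polynomials and under multiplication---coincides, in each multilinear degree, with the two-sided ideal $\mathfrak{z}_k$ of the wreath algebra requires the $E$-version of Procesi's reduction, namely that substituting a monomial into a multilinear pure trace polynomial is, after full linearization, realized by multiplication inside $F^t\sim S_k$; this is conceptually routine but must be handled carefully because of the idempotents. The other delicate point is the bookkeeping showing that the single globally defined $z_i$, which the embedding $F^t\sim S_{n_i+1}\hookrightarrow F^t\sim S_k$ spreads across many content blocks (as $1_A=e_1+\cdots+e_t$), localizes in each block to a conjugate of the corner element $e_{c_0}a_{n_i+1}$, so that the remaining computation is literally the classical Razmyslov--Procesi one.
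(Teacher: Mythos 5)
Your proposal is correct, and the front end (dictionary between multilinear pure $E$-trace polynomials and the wreath algebra $F^t\sim S_k$; identification of identities with the kernel $K_k$ of $F^t\sim S_k\to End(V^{\otimes k})$; description of $K_k$ as $\bigoplus I_{\langle\lambda\rangle}$ with some height $>n_i$; the $T$-ideal/two-sided-ideal correspondence via Procesi-style induction) is exactly the scaffolding the paper sets up in Section~5, where Theorem~\ref{thm:d3} is obtained as the special case $A=F^t$ of Theorem~\ref{thm:rp}, via Corollary~\ref{cor:atr}, Lemma~\ref{lem:ind}, and Theorem~\ref{thm:ker}. Where you genuinely diverge is in the reverse inclusion $K_k\subseteq\mathfrak{z}_k$. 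The paper does it by quoting the Branching Lemma (Lemma~\ref{lem:branch}, from the appendix of Giambruno--Regev): every $I_{\langle\mu\rangle}$ with $\operatorname{ht}(\mu_i)\ge n_i+1$ contains $(1^{n_i+1})$ in position $i$, hence is induced from the one-dimensional seed ideal $I_{\lambda(i)}$ carrying the $i$-th Cayley--Hamilton polynomial; Lemma~\ref{lem:ind} then closes the argument. You instead expand $F^t\sim S_k$ into its content blocks $M_m\bigl(F[S_{k_1}]\ootimes F[S_{k_t}]\bigr)$, track the image of $z_i$ into the $i$-th tensor factor as the $(n_i+1)$-letter antisymmetrizer, and reduce to the classical Razmyslov--Procesi / Schur--Weyl statement in $F[S_{k_i}]$ one factor at a time. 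This is a sound alternative; it trades the wreath-product Branching Lemma for the single-symmetric-group branching rule applied coordinatewise, so it is more self-contained if one already has the classical theorem in hand. The trade-off is that your content-block decomposition is specific to $A=F^t$: the paper's route via Lemma~\ref{lem:branch} is stated so that it also covers Theorem~\ref{thm:rp} for arbitrary semisimple $A$, where the analogous block decomposition is messier. Your two flagged delicate points (the $E$-version of the $T$-ideal/two-sided-ideal correspondence, and the bookkeeping of how $z_i\otimes 1^{\otimes(k-n_i-1)}$ scatters over content blocks before being localized by conjugation and corner idempotents) are exactly the right places to be careful, and both can be filled in as you indicate.
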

Now that we have extended Domokos's results to mixed traces, we can easily extend this
one also.  Define the mixed trace polynomial $MCH_n(x_1,\ldots,x_{n-1})$ via
$$tr(MCH_n(x_1,\ldots,x_{n-1})x_n)=CH_n(x_1,\ldots,x_n).$$  Then by
the non-degeneracy of the trace we have
\begin{thm}
The ideal of mixed $E$-trace identities for $M_n(F)$ is generated by
the mixed Cayley-Hamilton identities restricted to each diagonal component
$$MCH_{n_i}(e_{ii}X_1e_{ii},\ldots,e_{ii}X_{n_i+1}e_{ii})$$\label{thm:d4}
\end{thm}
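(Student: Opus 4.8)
The plan is to deduce this from Domokos's pure-trace theorem, Theorem~\ref{thm:d3}, by the same passage through non-degeneracy of the trace that produced Corollaries~\ref{cor:2.3} and~\ref{cor:2.4}. Write $c_i:=CH_{n_i+1}(E_iX_1E_i,\dots,E_iX_{n_i+1}E_i)$ and $m_i:=MCH_{n_i+1}(E_iX_1E_i,\dots,E_iX_{n_i}E_i)$ for the $i$th restricted pure and mixed Cayley--Hamilton identities; by the defining relation for $MCH$ one has $c_i=tr\bigl(m_i\cdot E_iX_{n_i+1}E_i\bigr)$, and note $m_i=E_im_iE_i$. Let $P\subseteq F\langle X,E,tr\rangle$ be the trace T-ideal generated by the $c_i$ and $J$ the one generated by the $m_i$; by Theorem~\ref{thm:d3}, $P=Id(M_n(F),ptr)$. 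The inclusion $J\subseteq Id(M_n(F),tr)$ is immediate, since $m_i$ takes values in $E_iM_n(F)E_i\cong M_{n_i}(F)$ and $tr(m_i\cdot E_iX_{n_i+1}E_i)=c_i$ vanishes identically on $M_n(F)$, so $m_i$ is the zero function there by non-degeneracy of the trace form on $M_{n_i}(F)$. As $F$ has characteristic zero, it suffices to prove the reverse inclusion for multilinear identities.

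A first observation is $P\subseteq J$: since $J$ is a trace T-ideal it is closed under $tr(\,\cdot\,)$ and two-sided multiplication, and $m_i\in J$, so $c_i=tr(m_i\cdot E_iX_{n_i+1}E_i)\in J$. Now let $g=g(X_1,\dots,X_k)$ be a multilinear mixed $E$-trace identity of $M_n(F)$ and set $h:=tr(g\,X_{k+1})$. Then $h$ is a multilinear \emph{pure} $E$-trace identity of $M_n(F)$, so $h\in P$ by Theorem~\ref{thm:d3}, and hence $h\in J$.

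The substantive step is to ``divide $h$ by $X_{k+1}$''. In characteristic zero a multilinear element of $P$ is a sum of multilinear consequences of the $c_i$, and since each $c_i$ is a pure-trace polynomial it is central; hence $h=\sum_\ell c_{i_\ell}(w^{(\ell)}_\bullet)\,p_\ell$ with each summand multilinear in $X_1,\dots,X_{k+1}$, each $p_\ell$ a pure $E$-trace polynomial, and the $w^{(\ell)}_j$ arbitrary trace polynomials. In a given summand $X_{k+1}$ occurs exactly once. If it occurs in $p_\ell$, write $p_\ell=tr(\rho_\ell X_{k+1})$, so the summand equals $tr\bigl(c_{i_\ell}(w^{(\ell)}_\bullet)\rho_\ell X_{k+1}\bigr)=tr(u_\ell X_{k+1})$ with $u_\ell\in J$, using $c_{i_\ell}(w^{(\ell)}_\bullet)\in P\subseteq J$. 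If $X_{k+1}$ occurs inside $c_{i_\ell}(w^{(\ell)}_\bullet)$, it lies in one substitution slot $w^{(\ell)}_{j_0}$; pulling out an inner trace factor of $w^{(\ell)}_{j_0}$ containing it if necessary, we may assume $X_{k+1}$ lies between matrix letters of $w^{(\ell)}_{j_0}$, and since $P$ and $J$ are closed under permuting the variables we may assume $j_0$ is the last slot. Then $c_{i_\ell}(w^{(\ell)}_\bullet)=tr\bigl(m'_\ell\cdot E_{i_\ell}w^{(\ell)}_{j_0}E_{i_\ell}\bigr)$, where $m'_\ell$ is the instance of $m_{i_\ell}$ obtained by substituting into its (non-final) arguments, so $m'_\ell\in J$; writing $E_{i_\ell}w^{(\ell)}_{j_0}E_{i_\ell}=\sum_\mu\alpha_\mu X_{k+1}\beta_\mu$ and cycling the outer trace, the summand again becomes $tr(u_\ell X_{k+1})$ with $u_\ell=\sum_\mu p_\ell\beta_\mu m'_\ell\alpha_\mu\in J$. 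Summing, $h=tr(u\,X_{k+1})$ with $u:=\sum_\ell u_\ell\in J$; comparing with $h=tr(g\,X_{k+1})$ and using that $p\mapsto tr(p\,X_{k+1})$ is injective on trace polynomials not involving $X_{k+1}$---the form of non-degeneracy of the trace relevant here, which is checked directly in $F\langle X,E,tr\rangle$ by inspecting trace monomials---we conclude $g=u\in J$.

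I expect the only real obstacle to be the bookkeeping in the previous paragraph: one must track where $X_{k+1}$ sits inside an arbitrary consequence of a $c_i$. What makes this manageable is that $c_i$ is a scalar assembled from the idempotent-sandwiched variables $E_iX_jE_i$, so that after permuting variables and cycling traces one can always exhibit $X_{k+1}$ as the outer factor of a single trace whose other factor is a substitution instance of $m_i$; everything else is the formal dictionary between pure and mixed trace identities already used for Corollaries~\ref{cor:2.3} and~\ref{cor:2.4}.
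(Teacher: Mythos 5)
Your proposal follows the same route the paper gestures at: deduce the mixed statement from Domokos's pure-trace theorem (Theorem~\ref{thm:d3}) via non-degeneracy of the trace. The paper leaves the passage entirely implicit (``Then by the non-degeneracy of the trace we have\ldots''), whereas you spell it out: reduce to multilinear $g$, form $h = tr(g\,X_{k+1})$, use Theorem~\ref{thm:d3} to write $h$ as a sum of multilinear consequences of the $c_i$, track the single occurrence of $X_{k+1}$ in each summand, and ``divide'' by cyclicity together with formal injectivity of $p \mapsto tr(p\,X_{k+1})$ in the free algebra. That bookkeeping is correct and fills a real gap in the source.

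There is, however, one slip, inherited from the (typo-laden) statement of the theorem. You define $m_i := MCH_{n_i+1}(E_i X_1 E_i, \dots, E_i X_{n_i} E_i)$ and assert $m_i = E_i m_i E_i$. This is false: $MCH_{n+1}(y_1,\dots,y_n)$ has a constant term, coming from $\sigma=\mathrm{id}$ in $CH_{n+1}$, namely $tr(y_1)\cdots tr(y_n)\cdot 1$. Substituting $y_j = E_i X_j E_i$ gives $tr(E_iX_1E_i)\cdots tr(E_iX_{n_i}E_i)\cdot 1$, not $\cdots\cdot E_i$, so $m_i$ is not supported on the $E_i$-corner. Concretely, for $n_i=1$ one has $MCH_2(y)=tr(y)\cdot 1-y$, whence $m_i=tr(E_iXE_i)\cdot 1-E_iXE_i$ evaluates on $M_n(F)$ to $tr(e_iAe_i)(I_n-e_i)$, which is not identically zero; so your $m_i$ is not a mixed trace identity of $M_n(F)$, and the step $J\subseteq Id(M_n(F),tr)$ fails as written. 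The fix is to set $m_i := E_i\,MCH_{n_i+1}(E_iX_1E_i,\dots,E_iX_{n_i}E_i)\,E_i$. Then $m_i=E_im_iE_i$ is automatic, $c_i=tr\bigl(m_i\cdot E_iX_{n_i+1}E_i\bigr)$ still holds by cyclicity and $E_i^2=E_i$, your non-degeneracy argument now correctly shows $m_i$ is an identity, and the ``divide by $X_{k+1}$'' paragraph goes through unchanged (your $m_\ell'$ is still an instance of the corrected $m_{i_\ell}$ and so lies in $J$). Note the theorem statement itself has the same defect, alongside the index typo ($MCH_{n_i}$ applied to $n_i+1$ arguments) that you already silently repaired.
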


\section{Cocharacters and Poincar\'e Series}
The free algebras $F\langle X,E\rangle$, $F\langle X,E,tr\rangle$
and $F\langle X,E,ptr\rangle$ each have an $m$-fold grading
with $m=|E|^2\times|X|$ in which
$e_ix_\alpha e_j$ has grading $(0,\ldots,0,1,0\ldots)$, where the~1
is in the position corresponding to $(i,j,\alpha)$.  So, for example,
the degree of $$tr(e_{11}X_3e_{22}X_2e_{44}X_1)=tr(e_{11}X_3e_{22}e_{22}X_2e_{44}e_{44}X_1e_{11})$$
would be 1 in each of the degrees $(1,2,3)$, $(2,4,2)$ and $(4,1,1)$.
  The ideals of identites
of  algebras are homogenous ideals and so the quotients,
which are the universal algebras, are also graded.  This implies
that they will have Poincar\'e series in~$m$ variables which we may call
$t(i,j,\alpha)$.  
For fixed $i,j$ the Poincar\'e series will be symmetric
in the $t(i,j,\alpha)$ and so the Poincar\'e series can be written as
$$\sum_{i,j=1}^{|E|}\sum_{n_{ij}=0}^\infty \sum_{\lambda_{ij}\vdash n_{ij}}m(\lm_{11},\lm_{12}\ldots,\lm_{|E|,|E|})
S_{\lm_{11}}(t(1,1,\alpha)) S_{\lm_{12}}(t(1,2,\alpha)).\cdots$$
Note that the coefficients $m(\lambda)$ depends only on the algebra~$R$ and the
partitions and not on the number of variables, except to the 
extent that the Schur function $S_{\lm_{ij}}$ will be zero unless the 
number of variables $t(i,j,\alpha)$ is greater than or equal
to the height of the partition~$\lm_{ij}$.

We  work first with the Poincar\'e series for
the pure trace ring which we will denote
$P(t,ptr)$ or $P(t,ptr;n_{11},\ldots,n_{ss})$ if we 
need to be more specific.

For example, in the case of  $M_2(F)$ with idempotents
$e_{11}$ and $e_{22}$ the trace ring will be the
ring generated by all $x_{11}^{(\alpha)}$, all $x_{22}^{(\alpha)}$
and all products $x_{12}^{(\alpha)}x_{21}^{(\beta)}$.  If $|X|=1$
the Poincar\'e series would be $$(1-t(1,1,1))^{-1}(1-t(2,2,1))^{-1}
(1-t(1,2,1)t(2,1,1))^{-1}.$$  If $(\lm_{11},\lm_{22},\lm_{12},\lm_{21})=(a,b,c,d)$ is 
a quadruple of one part partitions, then $m(a,b,c,d)$ is zero if $c\ne d$
and it equals~1 if $c=d$.  

This is perhaps a good place to point out there is a parallel 
approach using symmetric group characters.  For fixed
$\sum n_{ij}=n$ one may consider the space $V$ of all non-commutative polynomials
multilinear in $x(i,j,\alpha)$ where for each $(i,j)$, $\alpha$ takes
values from~1 to~$n_{ij}$.  Then~$V$ will be a module for
the direct sum of  symmetric groups $\oplus S_{n_{ij}}$.  As usual, the set of identities for an
algebra~$R$ will be a submodule for $\oplus S_{n_{ij}}$ and hence 
the quotient will
also be a module.  The characters of these modules would constitute the
cocharacter sequence, or to be more precise, multisequence.  Irreducible characters of $\oplus S_{n_{ij}}$
are indexed by tuples of partitions $\lm_{ij}\vdash n_{ij}$.  Again
just as in the classical case, the multiplicity of the irreducible character $\times_{i,j} \chi^{n_{ij}}$ 
in the cocharcter is equal to the multiplicity $m(\lm_{11},\ldots,
\lm_{tt})$ of the corresponding product of Schur functions
in the Poincar\'e series.  Moreover, if $e_iAe_j$ has dimension
less than or equal to some~$d$ then the algebra will satisfy
a Capelli identity of the form
$$\sum (-1)^\sigma (e_ix_{\sigma(1)}e_j)y_1 (e_ix_{\sigma(2)}e_j)y_2\cdots
y_de_d(e_ix_{\sigma(d+1)}e_j).$$
This will mean that $m(\langle \lm\rangle)$ will be zero if $\lm_{ij}$ has
hieght greater than~$d$.  

Returning to the case of $A=M_2(F)$ with two
idempotents, each $e_iM_2(F)e_j$ is one dimensional so the cocharacter
contains only partitions with height less than or equal to~1.  Hence, the
Poincar\'e series in any number of variables is completely determined
by the Poincar\'e series we already calculated in which there is one
variable of each type.  The general Poincar\'e series would be 
$$\sum_{a,b,c=0}^\infty S_{(a)}(t(1,1,{\alpha}))S_{(b)}(t(2,2,\beta))
S_{(c)}(t(1,2,\gamma))S_{(c)}(t(2,1,\delta)).$$

  Thanks to Domokos' invariant theory we are able
to compute the Poincar\'e series in the case of pure
trace identities of matrices.  The main tool is Weyl's character
formula which we now review.

Define the measure $d\nu(z_1,\ldots,z_n)$ to be
$$(n!)^{-1}(2\pi i)^{-n}{\prod_{1\le i\ne j\le n}\left(1-\frac{z_i}{z_j}\right)}\frac{dz_1}{z_1}\wedge\cdots\wedge\frac{dz_n}{z_n}.$$  Let $M$ be a module
for $GL_n(F)$ with character $f(z_1,\ldots,z_n)$, meaning that 
$f(z_1,\ldots,z_n)$ is the trace of a generic diagonal matrix acting on
$M$.  Then Weyl's character formula says that the dimension of the
space of $GL_n(F)$-invariants of~$M$ equals
$$\oint_T f(z_1,\ldots,z_n)d\nu(z_1,\ldots,z_n),$$
where $T$ is the torus $|z_i|=1$, for all~$i$.   A useful special case, more or
less equivalent to the general case is gotten from the case in which $M$ is the tensor product of
an irreducible module with the dual of another irreducible module:\begin{equation}\label{eq:orth}
\oint_T S_\lm(z_1,\ldots,z_n)S_\mu(z_1^{-1},\ldots,z_n^{-1}) d\nu=\delta_{\lm,\mu}.\end{equation}
  For our purposes it will
be better to use the following version of Weyl's character formula.  It
is easy to see that it is a consequence.
\begin{thm}[Weyl's character formula 1] Let $M=\oplus M_\alpha$ be a graded module for
$GL_n(F)$. Let $f_\alpha(z)$ be the character of $M_\alpha$ and
let $f(z,x)=\sum_\alpha f_\alpha(z)x^\alpha$.  Then the Poincar\'e series
of the invariant
space of $GL_n(F)$ equals the integral $\oint f(z,x)d\nu(z)$.
\label{thm:Weyl1}\end{thm}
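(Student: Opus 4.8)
The statement we must prove, ``Weyl's character formula 1,'' asserts that if $M=\bigoplus_\alpha M_\alpha$ is a graded $GL_n(F)$-module with $f_\alpha(z)$ the character of $M_\alpha$, and if we package these into the generating function $f(z,x)=\sum_\alpha f_\alpha(z)x^\alpha$, then the Poincar\'e series of the invariant subspace $M^{GL_n(F)}=\bigoplus_\alpha (M_\alpha)^{GL_n(F)}$ equals $\oint_T f(z,x)\,d\nu(z)$. The excerpt already grants us the classical Weyl character formula: for a single $GL_n(F)$-module $N$ with character $g(z)$, we have $\dim N^{GL_n(F)}=\oint_T g(z)\,d\nu(z)$. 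So the work is entirely bookkeeping: interchanging the integral with the sum over the grading $\alpha$.

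\textbf{Step 1.} Apply the classical formula componentwise. For each fixed multidegree $\alpha$, the space $M_\alpha$ is a (finite-dimensional) $GL_n(F)$-module with character $f_\alpha(z)$, so $\dim (M_\alpha)^{GL_n(F)} = \oint_T f_\alpha(z)\,d\nu(z)$.

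\textbf{Step 2.} Observe that taking invariants is compatible with the grading: since the $GL_n(F)$-action preserves each graded piece $M_\alpha$, we have $M^{GL_n(F)}=\bigoplus_\alpha (M_\alpha)^{GL_n(F)}$ as graded vector spaces. Hence the Poincar\'e series of $M^{GL_n(F)}$ is by definition $\sum_\alpha \dim\bigl((M_\alpha)^{GL_n(F)}\bigr)\,x^\alpha = \sum_\alpha \left(\oint_T f_\alpha(z)\,d\nu(z)\right) x^\alpha$ by Step 1.

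\textbf{Step 3.} Interchange sum and integral to obtain $\sum_\alpha \left(\oint_T f_\alpha(z)\,d\nu(z)\right) x^\alpha = \oint_T \left(\sum_\alpha f_\alpha(z)\,x^\alpha\right) d\nu(z) = \oint_T f(z,x)\,d\nu(z)$, which is the claim. \textbf{The one point needing a word of care} — and the only candidate for a ``main obstacle,'' though it is a mild one — is justifying this interchange. There are two clean ways to do it, and I would mention whichever fits the paper's level of rigor. First, one can read the identity \emph{formally}: both sides are power series in the grading variables $x$ with coefficients that are (Laurent-polynomial-valued) integrals, and the equality is just an equality of coefficients of each monomial $x^\alpha$, which is exactly Step 1 — no analysis required. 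Second, if one prefers a genuine analytic statement, note that in the applications each graded piece $M_\alpha$ lives in a fixed polynomial degree, so for each monomial $x^\alpha$ only finitely many $f_\beta(z)$ contribute to the relevant coefficient, and on the compact torus $T$ the integrand is a Laurent polynomial in $z$; there is no convergence issue within any single homogeneous component. I would present the formal-power-series reading as the primary justification, since that is how the series is used in the sequel, and remark that it is ``a consequence'' of the classical formula exactly as the excerpt anticipates.
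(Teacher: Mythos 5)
Your proof is correct and is exactly the argument the paper has in mind when it says the theorem is ``a consequence'' of the classical Weyl integral formula (the paper gives no further detail). Applying the classical formula to each homogeneous component, noting that invariants commute with the grading, and reading the interchange of sum and integral as a coefficientwise identity in the grading variables is precisely the intended bookkeeping.
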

A further generalization replaces $GL_n(F)$ with $G=GL_{n_1}
\times\cdots\times GL_{n_t}$.  The generic diagonal matrix with
entries $z_1,\ldots,z_n$ belongs to~$G$ and we continue to refer to
the trace of this matrix on a module the character of the module. 
\begin{thm}[Weyl's character formula 2] Let $M=\oplus M_\alpha$ be a graded module for
$G$. Let $f_\alpha(z)$ be the character of $M_\alpha$ and
let $f(z,x)=\sum_\alpha f_\alpha(z)x^\alpha$.  Then the
Poincar\'e series of the invariant
space of $GL_n(F)$ equals the integral $\oint f(z,x)d\nu(z)$, where
$$d\nu(z)=d\nu(z_1,\ldots,z_{i_1})d\nu(z_{i_1+1},\ldots,z_{i_1+i_2})
\cdots.$$
\label{thm:Weyl2}\end{thm}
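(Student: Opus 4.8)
The plan is to deduce this from Theorem~\ref{thm:Weyl1}, applied to one $GL_{n_j}$-factor of $G$ at a time. The key structural point is already built into the statement: writing the block of variables attached to the $j$-th factor as $z^{(j)}=(z_{i_1+\cdots+i_{j-1}+1},\ldots,z_{i_1+\cdots+i_j})$, the torus of $G$ is $T=T_1\times\cdots\times T_t$ with $T_j\subseteq GL_{n_j}$, and the measure $d\nu(z)$ is by definition the product $\prod_{j=1}^t d\nu(z^{(j)})$. So the integral over $T$ will factor as an iterated integral, and each inner integral will be an instance of Theorem~\ref{thm:Weyl1}.

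First I would reduce to a single graded component. Since the grading of $M$ is by $G$-submodules, taking $G$-invariants commutes with the direct sum, so the Poincar\'e series of $M^G$ is $\sum_\alpha \dim(M_\alpha^G)\,x^\alpha$; it therefore suffices to prove $\dim(M_\alpha^G)=\oint f_\alpha(z)\,d\nu(z)$ for each fixed $\alpha$ and then multiply by $x^\alpha$ and sum. The interchange of this summation with the integral is harmless: $d\nu$ is a fixed finite measure on the compact torus $T$, and in each total degree only finitely many $\alpha$ contribute, so the identity holds degree by degree.

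Second I would evaluate $\dim(M_\alpha^G)$ and the integral and match them. Decompose the finite-dimensional module $M_\alpha$ into $G$-irreducibles. Because $G$ is a direct product, each irreducible is an external tensor product $U_1\boxtimes\cdots\boxtimes U_t$ with $U_j$ an irreducible (rational) $GL_{n_j}$-module, and its character is the product $\prod_j \chi_{U_j}(z^{(j)})$. Such a summand has nonzero $G$-invariants precisely when every $U_j$ is the trivial $GL_{n_j}$-module, and then it contributes exactly $1$ to $\dim(M_\alpha^G)$. On the other side, since $d\nu(z)=\prod_j d\nu(z^{(j)})$, the integral of $\prod_j\chi_{U_j}(z^{(j)})$ over $T$ factors into $\prod_j\bigl(\oint_{T_j}\chi_{U_j}(z^{(j)})\,d\nu(z^{(j)})\bigr)$, and by Theorem~\ref{thm:Weyl1} --- equivalently by the $\mu=\varnothing$ case of~\eqref{eq:orth} --- each factor equals $1$ if $U_j$ is trivial and $0$ otherwise. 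Hence $\oint f_\alpha(z)\,d\nu(z)$ counts exactly the summands with all $U_j$ trivial, i.e.\ equals $\dim(M_\alpha^G)$, and summing against $x^\alpha$ proves the theorem. (Alternatively one can argue analytically: $M^G=M^K$ for the maximal compact subgroup $K=U(n_1)\times\cdots\times U(n_t)$, so $\dim M_\alpha^G=\int_K\chi_{M_\alpha}\,dk$; Fubini over $K=\prod_j U(n_j)$ together with Weyl's integration formula for each $U(n_j)$ then rewrite this as the iterated torus integral $\oint f_\alpha(z)\,d\nu(z)$.)

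I do not expect a serious obstacle here --- the authors' remark that Theorem~\ref{thm:Weyl1} follows easily from~\eqref{eq:orth} already anticipates this. The only points that deserve a sentence of care are the factorization of $d\nu$, which is part of the statement, and the observation that over a direct product the trivial representation is the only irreducible with nonzero invariants, so that Theorem~\ref{thm:Weyl1} can be applied one coordinate block at a time without interference. (We also note a harmless misprint: the invariant space in question is that of $G$, not of $GL_n(F)$.)
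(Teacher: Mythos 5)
Your proof is correct. The paper does not actually give a proof of this theorem --- it is presented without argument as a routine generalization of Theorem~\ref{thm:Weyl1}, which in turn is said to be ``a consequence'' of equation~\eqref{eq:orth} --- so you are filling in exactly the argument the paper leaves implicit. Your outline is the expected one: over a direct product $G=GL_{n_1}\times\cdots\times GL_{n_t}$ every irreducible is an external tensor product $U_1\boxtimes\cdots\boxtimes U_t$ whose invariants are nonzero (of dimension one) precisely when each $U_j$ is trivial, the measure $d\nu(z)=\prod_j d\nu(z^{(j)})$ factors by definition so the torus integral of a product character splits by Fubini, and each factor is handled by the $\mu=\varnothing$ case of~\eqref{eq:orth}. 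The reduction to a fixed graded piece, and the remark that the sum-integral interchange is harmless because it holds degree by degree, are both sound. You are also right that ``the invariant space of $GL_n(F)$'' in the statement is a misprint for ``of $G$''; the formula as written would not even be well-posed otherwise, since $f(z,x)$ need not be symmetric in all of $z_1,\ldots,z_n$.
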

In dealing with the product of general linear groups, it will be useful to
denote by $K$ the product
\begin{equation}K=\prod_{1\le i\ne j\le n_1}(1-\frac{z_i}{z_j})\prod_{n_1+1\le i\ne j\le n_1+n_2}
(1-\frac{z_i}{z_j})\times\cdots\label{eq:K}\end{equation}
Given $1\le i,j\le n$ define $\gamma(i,j)$ to be the unique
$(a,b)$ such that the matrix unit $e_{ij}$ belongs to
$e_aM_n(F)e_b$
The polynomial ring $F[x_{ij}^{(\alpha)}]$ is a module for the
group $G=Gl_{n_1}\times\cdots\times GL_{n_t}$.  The character
is determined by the trace of a generic diagonal matrix
$$D=(diag(z_1,\ldots,z_{n_1}),diag(z_{n_1+1},\ldots,z_{n_1+n_2}),\ldots).$$
This character, exactly as in the classical case, is $\prod_{ij\alpha}[1-\frac{z_i}{z_j}
t(\gamma(i,j),\alpha]^{-1}$, see~\cite{F}.  Also parallel to the classical case, the
Poincar\'e series of the fixed ring can be computed by Weyl's character
formula using Theorem~\ref{thm:d2}.
\begin{thm} The Poincar\'e series of $U(M_n(F),ptr)$ is
$$\oint_T \prod_{i,j,\alpha}[1-\frac{z_i}{z_j}t(\gamma(i,j),
\alpha)]^{-1}\, d\nu.$$\label{thm:pptr}
\end{thm}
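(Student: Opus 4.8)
The plan is to read the formula off Theorem~\ref{thm:d2} and Theorem~\ref{thm:Weyl2}; almost all of the work has already been done in setting those up, and what remains is to check that the gradings are compatible. By Theorem~\ref{thm:d2}, $U(M_n(F),ptr)$ \emph{is} the fixed ring of $G$ acting on the polynomial algebra $F[x_{ij}^{(\alpha)}]$, so it suffices to compute the Poincar\'e series of that fixed ring. I would first note that this is an equality of $E$-multigraded (i.e.\ $t$-graded) rings: the pure trace generator $tr(e_{a_1}X_{j_1}\cdots e_{a_m}X_{j_m})$, which in $F\langle X,E,ptr\rangle$ carries a $1$ in each of the slots $(a_1,a_2,j_1),(a_2,a_3,j_2),\ldots,(a_m,a_1,j_m)$, expands over the matrix entries into a polynomial that is homogeneous of exactly this multidegree, since each factor $x_{pq}^{(j_s)}$ occurring in the expansion has $\gamma(p,q)=(a_s,a_{s+1})$ (with $a_{m+1}=a_1$). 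Moreover the $G$-action preserves the multigrading: for $g\in G=GL_{n_1}\times\cdots\times GL_{n_t}$ the matrix $g$ commutes with every block idempotent $e_a$, so $g(e_aM_n(F)e_b)g^{-1}=e_aM_n(F)e_b$, and hence $g\cdot x_{ij}^{(\alpha)}$ is a linear combination of the $x_{pq}^{(\alpha)}$ with $\gamma(p,q)=\gamma(i,j)$. Consequently the degree-$\mathbf d$ homogeneous component of $U(M_n(F),ptr)$ has dimension equal to $\dim\bigl(F[x_{ij}^{(\alpha)}]_{\mathbf d}\bigr)^{G}$ for every multidegree $\mathbf d$.

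Next I would record the character of $F[x_{ij}^{(\alpha)}]$ as a $t$-graded $G$-module, that is, the series $f(z,t)=\sum_{\mathbf d}f_{\mathbf d}(z)\,t^{\mathbf d}$ where $f_{\mathbf d}$ is the $G$-character of the degree-$\mathbf d$ piece, evaluated on the generic diagonal $D=(\mathrm{diag}(z_1,\ldots,z_{n_1}),\mathrm{diag}(z_{n_1+1},\ldots,z_{n_1+n_2}),\ldots)$. Since $DX_\alpha D^{-1}$ has $(i,j)$ entry $\tfrac{z_i}{z_j}x_{ij}^{(\alpha)}$, the matrix $D$ acts diagonally on the polynomial ring with the monomials as eigenvectors, and the monomial $\prod_{i,j,\alpha}(x_{ij}^{(\alpha)})^{k_{ij\alpha}}$ has $D$-eigenvalue $\prod (z_i/z_j)^{k_{ij\alpha}}$ and $t$-weight $\prod t(\gamma(i,j),\alpha)^{k_{ij\alpha}}$. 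Summing over all exponent tuples $(k_{ij\alpha})$, and treating the $t(i,j,\alpha)$ as formal variables so that the geometric series converge coefficient by coefficient, gives $f(z,t)=\prod_{i,j,\alpha}\bigl(1-\tfrac{z_i}{z_j}t(\gamma(i,j),\alpha)\bigr)^{-1}$, the expression already attributed to~\cite{F}.

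Finally I would apply Theorem~\ref{thm:Weyl2} to $M=F[x_{ij}^{(\alpha)}]$, with the grading variables of that theorem taken to be the $t(i,j,\alpha)$, so that its $f(z,x)$ is the $f(z,t)$ just computed; the theorem yields that the Poincar\'e series of the $G$-invariant subspace is $\oint_T f(z,t)\,d\nu(z)$, the integral being evaluated coefficient-wise in the formal $t$'s. By the graded identification of the first paragraph this is the Poincar\'e series of $U(M_n(F),ptr)$, which is exactly the claim. There is no genuinely hard step here: the substance lies in Theorems~\ref{thm:d2} and~\ref{thm:Weyl2}, both of which are taken as given, and the only point that deserves care is the grading compatibility that lets one pass from ``dimension of invariants in each multidegree'' to the Poincar\'e series in the intended $t$-variables, together with the routine remark that no analytic subtlety arises because each multidegree receives contributions from only finitely many monomials.
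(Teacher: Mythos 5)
Your proof is correct and follows essentially the same route as the paper: use Theorem~\ref{thm:d2} to identify $U(M_n(F),ptr)$ with the fixed ring of $G$ on $F[x_{ij}^{(\alpha)}]$, compute the graded $G$-character of that polynomial ring via the action of the generic diagonal matrix, and then integrate against $d\nu$ via Theorem~\ref{thm:Weyl2}. The paper treats the character computation and the grading compatibility as immediate (citing~\cite{F}), whereas you spell those points out; but the logical structure is identical.
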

If we wish to compute the Poincar\'e series of
the generic mixed trace ring
$U(M_n(F),tr)$ using Corollary~\ref{cor:2.3}
 we use the character of $G$ acting
on $$M_n(F[x_{ij}^{(\alpha)}])\cong M_n(F)\otimes
F[x_{ij}^{(\alpha)}].$$  The character of $M_n(F)$ under
conjugation by~$G$ is $\sum\frac{z_i}{z_j}$ yielding this
theorem.
\begin{thm} The Poincar\'e series of $U(M_n(F),tr)$ is
$$\oint_T \frac{\sum{z_i}{z_j}}{\prod_{i,j,\alpha}[1-\frac{z_i}{z_j}t(\gamma(i,j),
\alpha)]}\, d\nu.$$\label{thm:pmtr}
\end{thm}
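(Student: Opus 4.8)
The plan is to argue exactly as for Theorem~\ref{thm:pptr}, replacing Theorem~\ref{thm:d2} by Corollary~\ref{cor:2.4} and inserting the one extra character factor contributed by the tensor factor $M_n(F)$.

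First I would use Corollary~\ref{cor:2.4} to identify $U(M_n(F),tr)$ with the fixed ring of $G$ acting on
$$M_n(F[x_{ij}^{(\alpha)}])\cong F[x_{ij}^{(\alpha)}]\otimes M_n(F),$$
where $G$ acts on the polynomial factor via the restriction of $g(x_{ij}^{(\alpha)})=(gX_\alpha g^{-1})_{ij}$ and on the matrix factor via $g(a)=g^{-1}ag$. Before integrating I must check that this identification is one of graded rings, i.e.\ that the $m$-fold grading on the fixed ring agrees with the grading that $U(M_n(F),tr)$ inherits from $F\langle X,E,tr\rangle$. For this, note that under the isomorphism the block $e_iX_\alpha e_j$ of a generic matrix becomes $\sum_{\gamma(k,l)=(i,j)}e_{kl}x_{kl}^{(\alpha)}$; since $e_iX_\alpha e_j$ is assigned degree one in the coordinate $(i,j,\alpha)$, the compatible grading on $M_n(F[x_{ij}^{(\alpha)}])$ places $x_{kl}^{(\alpha)}$ in degree one in the coordinate $(\gamma(k,l),\alpha)$ and places all of $M_n(F)$, spanned by the matrix units $e_{kl}$, in degree zero. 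Since every $g\in G$ is block diagonal, $(gX_\alpha g^{-1})_{ij}$ is a combination of the $x_{kl}^{(\alpha)}$ with $\gamma(k,l)=\gamma(i,j)$, so $G$ acts by grading-preserving transformations and the invariant ring is a graded subring.

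Next I would compute the character required for Weyl's formula: the trace of the generic diagonal element $D=(diag(z_1,\ldots,z_{n_1}),\ldots)\in G$ on the graded pieces, with each coordinate $(i,j,\alpha)$ recorded by $t(i,j,\alpha)$. Since the module is a tensor product of $G$-modules and the grading is carried entirely by the polynomial factor, this character is the product of the characters of the two factors. The polynomial factor contributes, exactly as in the computation preceding Theorem~\ref{thm:pptr}, the series $\prod_{i,j,\alpha}[1-\frac{z_i}{z_j}t(\gamma(i,j),\alpha)]^{-1}$, because $x_{ij}^{(\alpha)}$ is a $D$-eigenvector of weight $z_i/z_j$ recorded by $t(\gamma(i,j),\alpha)$. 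The matrix factor sits in degree zero and $D(e_{ij})=D^{-1}e_{ij}D=(z_j/z_i)e_{ij}$, so it contributes the $t$-free character $\sum_{i,j}z_j/z_i=\sum_{i,j}z_i/z_j$. Hence
$$f(z,t)=\Bigl(\sum_{i,j}\frac{z_i}{z_j}\Bigr)\,\prod_{i,j,\alpha}\Bigl[1-\frac{z_i}{z_j}t(\gamma(i,j),\alpha)\Bigr]^{-1}.$$
Applying Theorem~\ref{thm:Weyl2} to this graded $G$-module, the Poincar\'e series of the space of $G$-invariants, which by Corollary~\ref{cor:2.4} is the Poincar\'e series of $U(M_n(F),tr)$, equals $\oint_T f(z,t)\,d\nu$, understood coefficient by coefficient in the variables $t(i,j,\alpha)$ as in the earlier theorems; this is the asserted formula.

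The computation is routine once the setup is fixed; the only step demanding real care is the grading and orientation bookkeeping in the first two paragraphs. One must make sure that the tensor grading on $M_n(F[x_{ij}^{(\alpha)}])$ is genuinely the grading under which Corollary~\ref{cor:2.4} is an isomorphism of graded rings, so that $M_n(F)$ contributes a pure character factor and shifts none of the $t$-variables; and one must use the orientation of the conjugation action coming from Corollary~\ref{cor:2.4} so that the matrix factor contributes $\sum z_i/z_j$. The latter choice is in fact immaterial here, since $\sum_{i,j}z_j/z_i=\sum_{i,j}z_i/z_j$.
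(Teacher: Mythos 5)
Your proposal is correct and follows exactly the paper's argument: identify $U(M_n(F),tr)$ with the $G$-invariants of $M_n(F[x_{ij}^{(\alpha)}])\cong M_n(F)\otimes F[x_{ij}^{(\alpha)}]$ via Corollary~\ref{cor:2.4}, multiply the polynomial-ring character by the degree-zero character $\sum z_i/z_j$ of $M_n(F)$ under conjugation, and apply Weyl's formula (Theorem~\ref{thm:Weyl2}). Your extra care with the grading and the observation that the orientation of the conjugation action is immaterial are sound, if more detailed than the paper's one-line justification.
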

 
In \cite{V} Van Den Bergh showed how to evaluate this type of
integrals using graph theory.  Let $\mathcal{G}$ be the directed graph
on~$n$ vertices~$v_i$ and with an edge $e(i,j,\alpha)$ from $v_i$
to $v_j$ for each term $(1-\frac{z_i}{z_j}
t(\gamma(i,j),\alpha))$ for $i\ne j$ in the denominator of the integrand. 
If $C$ is a simple oriented cycle in $\mathcal{G}$ we write $t^C$ for the
product of the variables corresponding to the edges in~$C$.  Here is Van
Den Bergh's theorem~3.4, based on~\cite{St}:
\begin{thm} The Poincar\'e series $P(t,ptr)$ is a rational funtion
whose denominator can be taken to be the product of all
$(1-t^C)$, where $C$ runs over the simple cycles of the graph.
Moreover, if the variables $t(\gamma(i,j),\alpha)$ are
all distinct, then this gives the least denominator.\label{thm:v1}
\end{thm}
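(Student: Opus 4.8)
The plan is to evaluate the integral of Theorem~\ref{thm:pptr} directly, by iterated one--variable residue calculus, reading the denominator off the graph $\mathcal{G}$ as the computation proceeds. Throughout we regard the parameters $t(\gamma(i,j),\alpha)$ as independent complex numbers of small modulus, so that the only singularities of the integrand meeting the closed polydisc bounded by the torus $T$ are the ones produced by the factors $1-\frac{z_i}{z_j}t_e$ in which $z_j$ is the variable currently being integrated; since the final answer is a rational function, analytic continuation in the $t$'s then returns the general statement. The base case, $n=1$, is immediate: the integrand is the scalar $\prod_\alpha(1-t(1,1,\alpha))^{-1}$, one factor per length--one cycle of $\mathcal{G}$.

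First I would fix the last variable $z_n$ (in the last block) and apply the residue theorem to $\oint(\cdots)\frac{dz_n}{2\pi i z_n}$. Inside $T$ the integrand has the pole $z_n=0$ coming from the measure and from the $z_n$ that the factors $1-\frac{z_i}{z_n}t$ of $K$ and of the in--edges put in a denominator once cleared, together with simple poles at $z_n=z_i t_e$, one for each edge $e=e(i,n,\alpha)$ \emph{into} $v_n$. Taking the residue at $z_n=z_i t_e$ has a transparent graph--theoretic meaning: the vertex $v_n$ is deleted, the in--edge $e$ is spliced to every out--edge $f=e(n,k,\beta)$ of $v_n$ to create a new edge $v_i\to v_k$ with parameter $t_et_f$, and every self--loop at $v_n$ pulls out of the integral as a scalar factor $(1-t_\gamma)^{-1}$; the residue at $z_n=0$ takes care of the ``no in--edge chosen'' case and simply removes $v_n$ while still contributing those self--loop factors. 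Summing the residues expresses the original integral as the integral attached to a new graph $\mathcal{G}'$ on $n-1$ vertices, times the self--loop factors at $v_n$. The combinatorial heart is then the observation that the simple cycles of $\mathcal{G}$ correspond bijectively to: the simple cycles of $\mathcal{G}'$ avoiding all spliced edges (the cycles of $\mathcal{G}$ missing $v_n$), together with, for each splice, the simple cycles of $\mathcal{G}'$ using the new edge $v_i\to v_k$ (the cycles of $\mathcal{G}$ through $v_n$, with their $v_n$--segment contracted and $t^C$ unchanged). Induction on $n$ then gives the denominator $\prod_C(1-t^C)$ over the simple cycles $C$ of $\mathcal{G}$.

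For the last assertion, suppose the parameters on distinct edges are distinct indeterminates. A simple cycle is determined by its edge set, and uses each of its edges once, so distinct simple cycles $C$ give distinct squarefree monomials $t^C$; hence the binomials $1-t^C$ are pairwise non--associate irreducibles (each is primitive and of degree one in each of its variables), and no cancellation among them is possible. It therefore suffices to show each $1-t^C$ genuinely occurs in the reduced denominator, which I would do by producing a pole of $P(t,ptr)$ on the hypersurface $t^C=1$: specialising to $0$ every parameter not lying on $C$ amounts, in the residue computation above, to deleting every vertex and edge off $C$, leaving the integral of a single oriented cycle, whose Poincar\'e series is $(1-t^C)^{-1}$ times a nonvanishing factor. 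Since specialisation cannot create poles, $1-t^C$ must already divide the denominator of $P(t,ptr)$.

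The step I expect to be the main obstacle is the bookkeeping in the inductive splice when a block has size larger than one. There the Weyl kernel $K$ contributes its own factors $1-\frac{z_i}{z_n}$ and $1-\frac{z_n}{z_j}$, so one must check that the collision of the pole $z_n=z_i t_e$ with the zero $z_n=z_i$ of $K$ is harmless for generic $t$, that the residue at $z_n=0$ is accounted for correctly (and typically cancels), and that the measure surviving the splice is again a $d\nu$ for the smaller product of general linear groups. Pinning this down is exactly the substance of Van den Bergh's argument in~\cite{V}, which itself rests on Stanley's rationality results~\cite{St}; the point here is only that it applies verbatim to the blocked situation furnished by Theorem~\ref{thm:d2}.
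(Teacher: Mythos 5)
The paper does not supply a proof of Theorem~\ref{thm:v1}: it is quoted verbatim as Van den Bergh's Theorem~3.4, which in turn rests on Stanley's work on linear homogeneous diophantine equations. So there is no argument in the paper against which to compare yours; what you have written is a blind reconstruction of Van den Bergh's result, and you correctly locate its substance in~\cite{V} and~\cite{St}.

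That said, the residue-splicing step in your sketch has a concrete gap, and it is more serious than the block-size bookkeeping you flag at the end. When you take the residue at $z_n=z_it_e$ for one chosen in-edge $e=e(i,n,\alpha)$, the factor from any \emph{other} in-edge $e'=e(i',n,\alpha')$ becomes $\bigl(1-\tfrac{z_{i'}}{z_i}\tfrac{t_{e'}}{t_e}\bigr)^{-1}$. This carries $t_e^{-1}$ and is not of the graph-integral form $\bigl(1-\tfrac{z_{i'}}{z_i}\cdot\text{(monomial in $t$)}\bigr)^{-1}$; consequently ``summing the residues expresses the original integral as the integral attached to a new graph $\mathcal{G}'$'' is not literally true, and the clean splice bijection on simple cycles does not get off the ground. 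The correct route, implicit in the paper's own discussion of $\mathbb{N}$-flows on $\mathcal{C}_n$ before Theorem~\ref{th:4.1}, is that the constant-term extraction in $z$ identifies $P(t,ptr)$ with a generating function over the monoid of $\mathbb{N}$-flows on $\mathcal{G}$ (twisted by the Laurent polynomial $K$), and Stanley's theory then yields rationality with denominator indexed by the circuits, i.e.\ the simple cycles. Van den Bergh's sharpening of this is exactly the spanning-tree formula the paper records as Theorem~\ref{th:3.6}. Your ``least denominator'' argument by specialising off a cycle $C$ is a sensible idea, but it additionally requires that the numerator of the specialised rational function not vanish on $t^C=1$; establishing that is again part of Van den Bergh's analysis rather than something that follows formally from squarefreeness of the $t^C$.
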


Van Den Bergh has an evaluation of
the integral of the types in
 theorems~\ref{thm:pptr} and~\ref{thm:pmtr} that works
under a fairly mild connectivity hypothesis which we won't bother mentioning, but which holds in
all the cases of interest.  The first step in the process
is to replace the integrals in theorems~\ref{thm:pptr} and~\ref{thm:pmtr} with ones of type
\begin{equation}(2\pi i)^{-n}n!^{-1}\oint_T f \prod_{ij\alpha}(1-\frac{z_i}{z_j}
t(i,j,\alpha))^{-1}\, \frac{dz_1}{z_1}\wedge\cdots\wedge\frac{dz_k}k
\label{eq:2},\end{equation}
where in the general case
 $f$ is a Laurant polynomial in~$z$.
If we can evaluate this integral we can evaluate our original one by
specializing each $t(i,j,\alpha)$ to $t(\gamma(i,j),\alpha)$.  Let
$\mathcal{G}_1$ be the corresponding graph, so $\mathcal{G}$
and $\mathcal{G}_1$ differ only in the labeling of the edges.

  Let $\mathcal{T}$ be a spanning
tree in~$\mathcal{G}_1$ rooted at~$v_1$.  This means that there
is a unique path in $\mathcal{T}$ from each $v_i$ to $v_1$.  For
each edge in $\mathcal{T}$ we have an equation $1-\frac{z_i}{z_j}
t(i,j,\alpha)=0$.  Together these equations let us express
any homogeneous degree zero polynomial $f(z)$ in the $z_i$ as a polynomial
in the $t$ which we write as $f|\mathcal{T}$.  And given an edge $e$ not
in $\mathcal{T}$ we write $C(e,\mathcal{T})$ for the unique  cycle
gotten from $\mathcal{T}$ by adjoining~$e$, oriented in the direction of~$e$.  Let $t^C$ be the product of the
 $t(i,j,\alpha)^{\pm1}$ corresponding to the edges in~$C$, where
the exponent will be $1$ or $-1$ depending on whether the 
orientations of the corresponding edge in $C$ and $\mathcal{G}_1$
agree or not.  Here is Van Den Bergh's theorem:
\begin{thm} The integral \eqref{eq:2} equals the sum
$$\sum \frac{K|\mathcal{T}}{\prod_{e\notin \mathcal{T}}
(1-t^{C(e\mathcal{T})})}$$
where $\mathcal{T}$ runs over all spanning trees rooted at~$v_1$.\label{th:3.6}
\end{thm}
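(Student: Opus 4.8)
\medskip\noindent\textbf{Proof proposal.} The plan is to evaluate \eqref{eq:2} by iterating the one-variable residue theorem, integrating out $z_n$, then $z_{n-1}$, and so on down to $z_2$; the last variable $z_1$ costs nothing, since the integrand is homogeneous of degree zero in the $z$'s, so $\tfrac1{2\pi i}\oint\frac{dz_1}{z_1}$ returns the (already $z_1$-free) answer and absorbs the final factor of $2\pi i$. To make the induction close I would not stay with integrands of the precise shape in \eqref{eq:2}, but prove the more flexible statement that an integral $(2\pi i)^{-k}\oint g\,\prod_e(1-w_e)^{-1}\bigwedge_i\frac{dz_i}{z_i}$ --- with $g$ a Laurent polynomial and the $w_e=\frac{z_{s(e)}}{z_{h(e)}}m_e$ indexed by the edges $e$ of a directed graph on the remaining vertices ($s(e)$, $h(e)$ the tail and head, each $m_e$ a Laurent monomial in the $t(i,j,\alpha)$, loops and parallel edges allowed) --- equals the matching sum over spanning trees of that graph rooted at $v_1$. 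Equation~\eqref{eq:2} is the initial instance, with graph $\mathcal G_1$, every $m_e=t(i,j,\alpha)$, and $g=f$ (in the setting of Theorem~\ref{thm:pptr} one has $f=K$, so $f|\mathcal T=K|\mathcal T$). One further has to pin the $t(i,j,\alpha)$ down in a suitable open cone --- say with the magnitudes strongly ordered by the index of the tail --- before running the contour integrations, so that the circles $|z_k|=1$ enclose exactly the intended poles; since the integral is a rational function of the $t$'s, the resulting identity then holds without that restriction.

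\medskip\noindent The recursive step is the evaluation of $\tfrac1{2\pi i}\oint_{|z_k|=1}(\cdots)\frac{dz_k}{z_k}$. With the $t$'s small, the factor $(1-\frac{z_i}{z_k}m_e)^{-1}=z_k(z_k-z_im_e)^{-1}$ carried by an edge $e\colon v_i\to v_k$ \emph{into} $v_k$ has a simple pole at $z_k=z_im_e$ inside the unit circle, while the factors on edges \emph{out of} $v_k$ have poles far outside; the $z_k$'s pulled into the numerator by the in-edges kill the pole of $\frac{dz_k}{z_k}$ at the origin (this is the role of the unstated connectivity hypothesis), so the residue at $z_k=0$ drops out. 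Hence the $z_k$-integral is the sum, over the in-edges $e\colon v_i\to v_k$, of the residue at $z_k=z_im_e$, and taking that residue is exactly a \emph{contraction} of $e$: substitute $z_k\mapsto z_im_e$ everywhere, so an edge $v_k\to v_j$ becomes $v_i\to v_j$ with weight multiplied by $m_e$, any other in-edge $v_{i'}\to v_k$ of weight $m'$ becomes $v_{i'}\to v_i$ of weight $m'/m_e$ (a cancelled $m_e$ surviving with exponent $-1$), and $g$ is replaced by its restriction to $z_k=z_im_e$. Iterating from $v_n$ down to $v_2$, each non-root vertex gets contracted through one chosen in-edge; the chosen edges form a spanning tree $\mathcal T$ rooted at $v_1$; the accumulated substitutions rewrite each $z_i$ as $z_1$ times the product of the $t$'s along the tree path, which is precisely the operation $f\mapsto f|\mathcal T$; and every edge of $\mathcal G_1$ outside $\mathcal T$ survives as a factor $(1-t^{C(e,\mathcal T)})^{-1}$, the $\pm1$ exponents coming from the ``$m'/m_e$'' rule. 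Summing over the residue choices gives $\sum_{\mathcal T}\frac{f|\mathcal T}{\prod_{e\notin\mathcal T}(1-t^{C(e,\mathcal T)})}$, consistently with Theorem~\ref{thm:v1} once denominators are cleared.

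\medskip\noindent The hard part will be the combinatorial match between the residue configurations and the spanning trees. Taken naively the contours enclose too much: the new edges created by earlier contractions contribute ``spurious'' poles, and the literal sum of residues over-counts. The remedy is the cone mentioned above --- one orders the $t(i,j,\alpha)$ so that, at the $k$-th stage, the pole coming from a newly-created in-edge of the current vertex lies \emph{outside} $|z_k|=1$ unless using it genuinely enlarges the already-chosen edges to a bigger subtree rooted at $v_1$. Establishing this --- equivalently, showing that the surviving choices are in bijection with the spanning trees rooted at $v_1$, each hit exactly once --- is the real work, and it is also where the orientation convention in $t^{C(e,\mathcal T)}$ must be matched carefully against the ``$m'/m_e$'' substitution. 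The rest is routine: the constant $(2\pi i)^{-n}n!^{-1}$ is carried along mechanically, and the independence of the final expression from the chosen order $z_n,\dots,z_2$ is automatic, since the integral does not see that order --- a convenient internal check.
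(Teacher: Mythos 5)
\noindent The paper supplies no proof of this statement at all: Theorem~\ref{th:3.6} is simply quoted as Van Den Bergh's theorem (it is introduced with ``Here is Van Den Bergh's theorem:'') and the reader is referred to~\cite{V}, which in turn builds on~\cite{St}. So there is no ``paper's own proof'' against which to check your attempt.

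\noindent Your iterated--residue strategy is the natural one and is in the spirit of the literature: integrate $z_n,\dots,z_2$ one at a time, at each stage the poles inside $|z_k|=1$ (for $t$'s small) come exactly from the in-edges of $v_k$, the residue at $z_k=z_im_e$ implements edge contraction, and the accumulated substitutions realize the operation $f\mapsto f|\mathcal T$. All of those local observations are correct, as is your recognition that the general statement needs $f|\mathcal T$ in the numerator (the paper's $K|\mathcal T$ is the specialization $f=K$), and that the $z_k=0$ residue vanishes under a connectivity hypothesis.

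\noindent However, as you yourself flag, the heart of the matter --- that the set of nonvanishing residue configurations is in bijection with the appropriate spanning trees rooted at $v_1$, each contributing exactly once and with the correct orientation signs in $t^{C(e,\mathcal T)}$ --- is not established; you explicitly call it ``the real work'' and leave it undone. This is a genuine gap, not a routine check. Two concrete difficulties it hides: first, the chosen edges are always \emph{in}-edges of the contracted vertex, so they assemble into an arborescence directed \emph{away} from $v_1$, whereas the paper's phrasing (``unique path from each $v_i$ to $v_1$'') suggests an in-arborescence; one must argue that the two tree-sums coincide with each other and with the integral, which is not immediate. Second, a two-vertex example with a single edge in each direction already shows that an unrestricted sum over undirected spanning trees overcounts the integral, so the precise class of trees in the theorem (and hence the bijection you need) must be pinned down before the cone-and-contraction argument can close. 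Until the combinatorial matching is proved, the proposal is a plausible sketch rather than a proof; to make it rigorous you should either carry out the induction on the contracted graph in full (tracking which newly created in-edges produce poles inside the contour under your ordering of the $t$'s) or simply appeal to~\cite{V} as the paper does.
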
 
The integrals from theorems~\cite{thm:pptr} and~\cite{pmtr} can be evaluated using
this theorem and then specializing each $t(i,j,\alpha)$ to $t(\gamma(i,j),\alpha)$.
\section{Two Examples}
\subsection{First Example}
In our first example we take $(n+1)\times(n+1)$ matrices with idempotents
$e_1=e_{11}+\ldots+e_{nn}$ and $e_2=e_{n+1,n+1}$. We will compute
the Poincar\'e series for the pure  trace ring of the algebra
generated by the generic elements
 $X_{12}(\alpha)=
e_1X(\alpha)e_2$ and $X_{21}=e_2X(\alpha)e_1$ only.  For ease of
notation we denote $t(1,2,\alpha)=t_{\alpha}$, $t(2,1,\alpha)=u_\alpha$
and $z_{n+1}=w$.  Then $K=\prod(1-\frac{z_i}{z_j})$ where $1\le i\ne
j\le n$ and the Poincar\'e series is given by
\begin{equation}(2\pi i)^{-n}n!^{-1}\oint \frac K{\prod_i(1-\frac{z_i}w t_\alpha)
(1-\frac w{z_i} u_\alpha)}\frac{dw}w\wedge \frac{dz}{z}.\label{eq:4.1}\end{equation}
By Cauchy's theorem
$$\prod(1-\frac{z_i}w t_\alpha)^{-1}=\sum S_\lambda(z_iw^{-1})s_\lm(t_\alpha)=\sum w^{-|\lm|}S_\lm
(z_i)S_\lm(t_\alpha)$$ and 
$$\prod(1-\frac w{z_i} u_\alpha)^{-1}=\sum S_\mu(wz_i^{-1})S_\mu(u_\alpha)
=\sum w^{|\mu|}S_\mu(z_i^{-1})S_\mu(u_\alpha).$$
If we first integrate the product of these two sums with respect to~$w$ then, since
there is no $w$ in the factor of~$K$ we get only the terms with no $w$, hence
$$(2\pi i)^{-n}n!^{-1}\oint K\sum_{\lm,\mu}S_{\lm}(z_i)S_\lm(t_\alpha)
S_\mu(z_i^{-1})S_\mu(u_\alpha)\; \frac{dz}z.$$
The integral of $KS_\lm(z)S_\mu(z^{-1})$
 equals $\delta_{\lm,\mu}$ by equation~\eqref{eq:orth}.  Hence the integral equals
$$\sum S_\lm(t_\alpha)S_\lambda(u_\alpha).$$
This formula was also obtained by Bahturin and Drensky
in section~3 of~\cite{BD}.
Note that the sum will be over only partitions with height less than or equal to~$n$.
So, if the number of $t_\alpha$ and $u_\alpha$ is each less than or
equal to~$n$ then the Poincar\'e series will be
$\prod_{\alpha,\beta}(1-t_\alpha u_\beta)^{-1}$; and if there are more
variables the Poincar\'e series will be a fraction of the form
$$\frac{N(t,u)}{\prod_{\alpha,\beta}(1-t_\alpha u_\beta)}.$$  Knowledge of the numerator
would be very interesting.

For the mixed trace identities in this case we would multiply the numerator of the integral
in~\eqref{eq:4.1} by $(w+\sum z_i)(w^{-1}+\sum z_i^{-1})$.  We leave the computation
to the interested reader and present only the result.  Given any partition $\lm$ of height less
than or equal to~$n$, let $\lm^+$ be the set of partitions of height less than or equal to~$n$
gotten from~$\lm$ by adding~1 to one of the parts.  Then the Poincar\'e series will equal
$\sum m(\lm,\mu)S_\lm(t_\alpha)S_\mu(u_\alpha)$ where the coefficients are given by
$$m(\lm,\mu)=\begin{cases}1,&\mbox{if }\lm\in\mu^+\mbox{ or if }\mu\in\lm^+\\
|\lm^+\cap\mu^+|,&\mbox{if }|\lm|=|\mu|\\
0,&\mbox{otherwise}
\end{cases}$$
\subsection{Second Example}
For our second example we take the algebra $M_n(F)$ with idempotents $e_i=e_{ii}$
so that each $e_iM_n(F)e_j$ will be one dimensional.  Hence, the Poincar\'e
series will involve only partitions of height less than or equal to~1 and we can determine
it completely by computing the integral in Theorem~\ref{thm:pptr} with only one variable $t(i,j)$ for
each $i,j$, i.e., by considering the Poincar\'e
series of the trace ring of the algebra generated by the set of $t_{ij}e_{ij}$.  Noting that $K=1$ the integral is
\begin{equation}(2\pi i)^{-n}\oint \prod_{ij}(1-\frac{z_i}{z_j}t(i,j))^{-1}\; \frac{dz}z.
\label{eq:3}\end{equation}
This integral has an interesting combinatorial interpretation.
Let $\mathcal{C}_n$ be the complete directed graph on $n$ vertices.  An $\mathbb{N}$-%
flow on $\mathcal{C}_n$ would be an assignment of non-negative integers
to each edge called the flow along that edge such that at each vertex the flow 
on the edges leading into that vertex equals the flow leading out.  Although
we cannot evaluate~\eqref{eq:3} explicitly, there are a number of things we can say.  By theorem~\ref{th:3.6} we know that the
Poincar\'e series will equal
\begin{equation}\sum_{\mathcal{T}}\prod_{e\notin\mathcal{T}}(1-t^{C(e,\mathcal{T})})^{-1},\label{eq:3.1}\end{equation} where $\mathcal{T}$ runs over all 
spanning trees rooted at a given vertex~$v_1$.  Using
standard techniques we can now prove these properties 
of the Poincar\'e series.
\begin{thm}\label{th:4.1} Let $P(t_{11},t_{12},\ldots,t_{nn})$ be the 
Poincar\'e series for the trace ring, $T$, of the ring generated
by the set $\{x_{ij}e_{ij}\}$, $i,j=1,\ldots,n$.
\begin{enumerate}
\item  For each cycle $\sigma=(i_1,i_2,\ldots,i_a)\in S_n$
we write $1-t_\sigma=1-\prod_{b=1}^a t_{i_b,i_{b+1}}$ with the convention that $i_{a+1}=i_1.$  Then
$P(t_{11},t_{12},\ldots,t_{nn})$ is a rational function with least denominator
 the product of all such $1-t_\sigma$, $\sigma\in S_n$.
\item The Poincar\'e series will satisfy a functional equation
$$P(t_{11}^{-1},\ldots,t_{nn}^{-1})=-P(t_{11},\ldots,t_{nn})\prod_{ij}t_{ij}.$$
\item If we specialize each $t_{ij}$ to a new variable~$t$, for each $i\ne j$ then
the resulting function has a pole at $t=1$ of order $n^2-1$.
\end{enumerate}
\end{thm}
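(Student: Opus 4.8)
\medskip\noindent\textbf{Proof proposal.}\; The plan is to extract all three statements from the rational form of $P$. The starting point is the spanning--tree evaluation \eqref{eq:3.1}, $P=\sum_{\mathcal T}\prod_{e\notin\mathcal T}(1-t^{C(e,\mathcal T)})^{-1}$, together with Van Den Bergh's Theorems~\ref{thm:v1} and~\ref{th:3.6}, plus the standard dictionary between graded Hilbert series and Krull dimension/Gorenstein duality. The one combinatorial translation we need: in the complete directed graph $\mathcal C_n$ (loops included) a simple oriented cycle $C$ is precisely a cyclically ordered set of distinct vertices, i.e.\ a cycle $\sigma\in S_n$ --- the $1$--cycles being the loops of $\mathcal C_n$, which carry the factors $1-t_{ii}$ --- and under this identification $t^{C}=t_\sigma$ in the notation of part~(1).

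Part~(1) is then Theorem~\ref{thm:v1} applied to $\mathcal C_n$: that theorem exhibits $P$ as a rational function with denominator $\prod_C(1-t^C)=\prod_{\sigma\in S_n}(1-t_\sigma)$, and since the edge variables $t(i,j)$ occurring in the integrand \eqref{eq:3} are pairwise distinct, its concluding clause says this is the least possible denominator.

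For part~(2) I would start from the form $P=N\big/\prod_{\sigma}(1-t_\sigma)$ given by~(1) and substitute $t_{ij}\mapsto t_{ij}^{-1}$. Each denominator factor transforms by $1-t_\sigma^{-1}=-t_\sigma^{-1}(1-t_\sigma)$, so the denominator acquires the factor $(-1)^{c}\big(\prod_\sigma t_\sigma\big)^{-1}$ with $c$ the number of cycles, and the whole question is to see that the numerator $N$ picks up exactly the reciprocal factor, so that only a single sign and the monomial $\prod_{ij}t_{ij}$ survive. The structural way to see this is that $T$ is the affine semigroup algebra of the (normal) monoid of $\mathbb N$--flows on $\mathcal C_n$ --- equivalently the span in $F[x_{ij}]$ of the cyclic monomials $x_{i_1i_2}x_{i_2i_3}\cdots x_{i_ki_1}$. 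Since $\mathcal C_n$ carries the all--ones flow, the interior of this monoid equals that flow plus the monoid, so by Stanley's criterion $T$ is Gorenstein with $a$--invariant the multidegree of $\prod_{ij}x_{ij}$; the graded Gorenstein functional equation then reads $P(t_{11}^{-1},\ldots,t_{nn}^{-1})=(-1)^{\dim T}\big(\prod_{ij}t_{ij}\big)P(t_{11},\ldots,t_{nn})$, and $\dim T$ is odd. (One can also grind the numerator identity out of \eqref{eq:3.1} by tracking how the exponents of the fundamental cycles combine over the spanning trees.)

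For part~(3), setting every $t_{ij}=t$ collapses $P$ to the ordinary total--degree Hilbert series of the finitely generated graded domain $T$, whose order of pole at $t=1$ is exactly its Krull dimension; so the content is the computation of $\dim T$. By Theorem~\ref{thm:d2}, $T$ is the ring of $GL_1\times\cdots\times GL_1$--invariants of $F[x_{ij}]$ under conjugation, so $\dim T$ is the rank of the lattice of integer flows on $\mathcal C_n$, and a direct count of that rank gives the asserted order of the pole. The main obstacle is concentrated in part~(2): verifying the Gorenstein property of the flow algebra and pinning down its $a$--invariant (equivalently, the hands--on transformation law of the numerator $N$); the dimension count of part~(3) is the only other point requiring genuine work, parts~(1) and~(3)'s framework being otherwise immediate from the cited theorems.
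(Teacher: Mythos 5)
Your parts (1) and (3) follow essentially the paper's own route: part (1) is the direct quotation of Theorem~\ref{thm:v1} applied to the complete digraph, and part (3) in the paper likewise reduces to computing a lattice rank --- the author packages it as an exact sequence of free abelian groups and reads off a GK--dimension, whereas you phrase it as the rank of the integer-flow lattice of $\mathcal C_n$; these are the same computation in slightly different clothing, and both land on the Krull dimension of the flow algebra. (Be aware, though, that the paper's arithmetic in part (3) is shaky: the exact sequence gives $\operatorname{rank}K=(n-1)^2$, so the dimension of $T$ comes out to $n^2-n+1$, not $n^2-1$ as asserted; your Gorenstein sign in part (2), which forces $\dim T$ odd for every $n$, is consistent only with $n^2-n+1$, so your reasoning is the internally coherent one, and if you carry out the ``direct count'' you should expect $n^2-n+1$.)

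Part (2) is where you genuinely diverge from the paper. The paper's proof is computational: it leaves to the reader the verification that, in the Van Den Bergh spanning-tree expansion \eqref{eq:3.1}, \emph{each summand individually} transforms by $-\prod_{ij}t_{ij}$ under $t_{ij}\mapsto t_{ij}^{-1}$, so that the functional equation holds term by term. You instead recognize $T$ as the normal affine semigroup ring of $\mathbb N$--flows on $\mathcal C_n$ (loops included), observe that the all-ones vector is a flow and is the unique minimal interior flow, and invoke Stanley's Gorenstein criterion to read off the $a$--invariant as the multidegree of $\prod_{ij}x_{ij}$; the functional equation is then the graded Gorenstein duality for $T$. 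Both are correct. The paper's approach is more elementary (no Cohen--Macaulay/Gorenstein machinery, just a one-line manipulation of $1-t^{-C}$) and is self-contained given Theorem~\ref{th:3.6}; yours explains \emph{why} the reciprocity holds, produces the monomial $\prod_{ij}t_{ij}$ conceptually as the $a$--invariant rather than by inspection, and would generalize immediately to the flow algebra of any strongly connected digraph carrying the all-ones flow. The only small gap in your write-up is that you assert ``$\dim T$ is odd'' without justification; this is needed to get the sign $-1$, and it does hold since $n^2-n+1$ is always odd, but it should be said.
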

\begin{proof} The first part is an immediate consequence
of Theorem~\ref{thm:v1}.

For the second part, It is an interesting exercise which
 we leave to the
reader, to show that in
\eqref{eq:3.1}  for each $\mathcal{T}$ that 
$$\prod_{e\notin \mathcal{T}}(1-t^{-C(e,\mathcal{T})})^{-1}
=-\prod_{ij}t_{ij}\prod_{e\notin \mathcal{T}}(1-t^{C(e,\mathcal{T})})^{-1}.$$

The third part follows from an argument in \cite{F}.
The trace ring $T$ can be written as a tensor product
$F[x_{11},\ldots,x_{nn}]\otimes T'$, where $T'$ is the trace
ring of the algebra generated by the $x_{ij}e_{ij}$, with $i\ne j$, and
so  Poincar\'e series can be written as $\prod_i(1-t_{ii})^{-1}P(T')$.
Let $\langle X\rangle$ be the free abelian group 
generated by the $x_{ij}$ with $i\ne j$, let $\langle Z\rangle$ be the free abelian
group generated by $z_1,\ldots,z_n$, and let
$\langle U\rangle$ be the free abelian group generated by
$u$; and consider the exact
sequence
\[ \begin{CD}1@>>> K@>>> \langle
 X\rangle@>f>>
\langle Z\rangle@>g>> \langle U\rangle@>>>1,\end{CD}\]
where $f(x_{ij})=z_iz_j^{-1}$ and $g(z_i)=u$, for all $i,j$.
It follows that the rank of $K$ is $n^2-n+1$.  But $F(K)$ is the 
quotient field of the trace ring $T'$ is  and so $T$ will have GK-dimension $n^2-1$ and the
theorem follows.\end{proof}
Using the notation $P_n$ to display the dependence on~$n$, here are the first few:
\begin{align*}
P_1&=\frac1{(1-t_{11})}\\
P_2&=\frac1{(1-t_{11})(1-t_{22})(1-t_{12}t_{21})}\\
P_3&=\frac{1-t_{12}t_{13}t_{23}t_{32}t_{31}t_{21}}{\prod_i(1-t_{ii})
(1-t_{12}t_{21})(1-t_{13}t_{31})(1-t_{23}t_{32})(1-t_{12}t_{23}t_{31})(1-t_{13}t_{32}t_{23})}
\end{align*}

For the mixed traces, let Poincar\'e series we would multiply the integrand in~\eqref{eq:3}
by $\sum \frac{z_i}{z_j}$.  Call the Poincar\'e series
be $Q_n$. Then  $Q_n$ is a rational function with  the same denominator as~$P_n$ and, just like
$P_n$, if we specialize each $t_{ij}$ with $i\ne j$ to $t$ in $Q_n$
we get a pole of order $n^2-1$ in the resulting
fraction.  However, $Q_n$ does not satisfy a
functional equation as in Theorem~\ref{th:4.1}~.2.
Then $P_1=Q_1$ and 
$Q_2=$ $$\frac{2+t_{12}+t_{21}}{(1-t_{11})
(1-t_{22})(1-t_{12}t_{21})}.$$  The
numerator of $Q_3$ is

\begin{multline*}-3\,t_{1,2}\,t_{1,3}\,t_{2,1}\,t_{2,3}\,t_{3,1}\,t_{3,2}-t_{1,3}\,t
 _{2,1}\,t_{2,3}\,t_{3,1}\,t_{3,2}-t_{1,2}\,t_{2,1}\,t_{2,3}\,t_{3,1}
 \,t_{3,2}\\-t_{1,2}\,t_{1,3}\,t_{2,3}\,t_{3,1}\,t_{3,2}-t_{1,2}\,t_{2,
 3}\,t_{3,1}\,t_{3,2}-t_{1,2}\,t_{1,3}\,t_{2,1}\,t_{3,1}\,t_{3,2}\\-t_{
 1,3}\,t_{2,1}\,t_{3,1}\,t_{3,2}-t_{1,2}\,t_{1,3}\,t_{2,1}\,t_{2,3}\,
 t_{3,2}-t_{1,3}\,t_{2,1}\,t_{2,3}\,t_{3,2}\\-t_{1,2}\,t_{1,3}\,t_{2,1}
 \,t_{3,2}+t_{2,1}\,t_{3,2}+t_{1,3}\,t_{3,2}+t_{3,2}-t_{1,2}\,t_{1,3}
 \,t_{2,1}\,t_{2,3}\,t_{3,1}\\-t_{1,2}\,t_{2,1}\,t_{2,3}\,t_{3,1}-t_{1,
 2}\,t_{1,3}\,t_{2,3}\,t_{3,1}+t_{2,3}\,t_{3,1}+t_{1,2}\,t_{3,1}+t_{3
 ,1}\\+t_{1,2}\,t_{2,3}+t_{2,3}+t_{1,3}\,t_{2,1}+t_{2,1}+t_{1,3}+t_{1,2
 }+3\end{multline*}
\section{$A$-identities}
Regev's theorem concerns a double centralizer between the wreath
product $A\sim S_n$ where $A\subseteq End(V)$ is a semisimple
algebra and the group $\mathcal{G}$ of units in $End_A(V)$.
Domokos limited his attention to the case in which the action is
multiplicity free and $A=F^t$.  This leads to the very natural theory
of $E$-trace identities.  It is possible to reproduce many of Domokos' results
in a more general setting.  Let $F\langle X,A\rangle$ be the amalgomated
product $A*F\langle X\rangle$.  Elements of $F\langle X,A\rangle$ are 
linear combinations of terms of the form $a_1x_{i_1}a_2\cdots a_mx_{i_m}a_{m+1}$.
We will call these $A$-polynomials.  It makes sense to talk about $A$-identities for
any $A$-algebra, in particular for $M_n(F)=End(V)$.
Since the $a_i$ are in $M_n(F)$ it makes sense to substitute arbitrary matrices
for the $x_i$ and to talk about $A$-identities for $M_n(F)$.  Likewise we can 
define $A$-trace polynomials, either mixed or pure.  
\begin{defn}Given $\sigma\in S_k$ whose inverse has cycle decomposition
$$\sigma^{-1}=(i_1,\ldots,i_a)(j_1,\ldots,j_b)\cdots$$ we define the trace monomial
$tr_\sigma$ via
$$tr_\sigma(x_1,\ldots,x_k)=tr(x_{i_1}\cdots x_{i_a})tr(x_{j_1}\cdots x_{j_b})\cdots.$$
   Given $w=(a_1\otimes\cdots\otimes a_k,\sigma)$ a monomial in $A\sim S_k$ we define
$tr_w(x_1,\ldots,x_k)$ to be $tr_\sigma(x_1a_1,\ldots,x_ka_k)$.
More generally, if $w=\sum w_i\in W_k$ is a sum of monomials we let
$tr_w$ be $\sum tr_{w_i}$.\label{def:mult}\end{defn}

Pure $A$-trace polynomials are important
for invariant theory:
\begin{thm} The $A$-pure trace polynomials give all function $f:M_n(F)^k\rightarrow F$ 
invariant under simultaneous conjugation from~$\mathcal{G}$.\label{thm:atr}
\end{thm}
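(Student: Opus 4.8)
The plan is to run Procesi's classical argument for matrix invariants, with Regev's double centralizer theorem playing the role of Schur--Weyl duality.

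One inclusion is immediate and should be recorded first. Since $\mathcal{G}\subseteq\mathrm{End}_A(V)$, every $g\in\mathcal{G}$ commutes with every $a\in A$, so for a monomial $a_1x_{i_1}\cdots a_kx_{i_k}$ and $g\in\mathcal{G}$ we have $g(a_1x_{i_1}\cdots a_kx_{i_k})g^{-1}=a_1(gx_{i_1}g^{-1})\cdots a_k(gx_{i_k}g^{-1})$; taking traces shows that $(x_1,\dots,x_k)\mapsto\mathrm{tr}(a_1x_{i_1}\cdots a_kx_{i_k})$ is $\mathcal{G}$-invariant, hence so is every $F$-linear combination of products of such maps, i.e.\ every $A$-pure trace polynomial function.

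For the converse I would first reduce to the multilinear case. As $F$ has characteristic zero, an arbitrary $\mathcal{G}$-invariant polynomial map $M_n(F)^k\to F$ is, after polarization, a sum of restitutions (variable identifications) of multilinear $\mathcal{G}$-invariant maps in more variables, and restituting variables in an $A$-pure trace polynomial again yields an $A$-pure trace polynomial; so it suffices to treat a multilinear $\mathcal{G}$-invariant $\Phi\colon M_n(F)^k\to F$. The space of multilinear maps $M_n(F)^k\to F$ is $\bigl(\mathrm{End}(V)^{\otimes k}\bigr)^{*}$, and the nondegenerate trace pairing $\langle S,T\rangle=\mathrm{tr}(ST)$ on $\mathrm{End}(V^{\otimes k})\cong\mathrm{End}(V)^{\otimes k}$ identifies it $\mathcal{G}$-equivariantly with $\mathrm{End}(V^{\otimes k})$ under diagonal conjugation by $\mathcal{G}$. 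A short computation with this pairing shows that $\Phi$ is $\mathcal{G}$-invariant exactly when the corresponding operator $S$ lies in $\mathrm{End}_{\mathcal{G}}(V^{\otimes k})$, the centralizer of the diagonal $\mathcal{G}$-action; and Regev's double centralizer theorem identifies this centralizer with the image of (the group algebra of) $A\sim S_k$ acting on $W=V^{\otimes k}$. Hence $\Phi(x_1,\dots,x_k)=\mathrm{tr}\bigl(\rho(w)\,(x_1\otimes\cdots\otimes x_k)\bigr)$ for some $w\in A\sim S_k$, where $\rho$ denotes the action on $W$ and $x_1\otimes\cdots\otimes x_k$ acts factorwise.

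The remaining step, and the only real point of care, is to match this last expression with the trace monomial $tr_w$ of Definition~\ref{def:mult}. By linearity take $w=(a_1\otimes\cdots\otimes a_k,\sigma)$; writing $\sigma^{-1}$ as a product of cycles and following how $\rho(\sigma)$ routes the tensor factors while the $a_i$ are inserted factorwise, the computation is the classical one for a bare permutation decorated with the $a_i$, and yields $\mathrm{tr}\bigl(\rho(w)(x_1\otimes\cdots\otimes x_k)\bigr)=tr_\sigma(x_1a_1,\dots,x_ka_k)=tr_w(x_1,\dots,x_k)$, a product of traces of monomials in the $x_ia_i$, hence an $A$-pure trace polynomial. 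I expect the bookkeeping --- getting the conventions (left versus right action, $\sigma$ versus $\sigma^{-1}$, and the placement of the $a_i$) to line up with Definition~\ref{def:mult} --- to be the main obstacle; everything else is Regev's theorem together with standard polarization. Presumably Definition~\ref{def:mult} was set up precisely so that this identification is exact.
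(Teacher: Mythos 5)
Your proposal is correct and follows essentially the same route as the paper: reduce to multilinear invariants, identify $\bigl(\mathrm{End}(V)^{\otimes k}\bigr)^{*}$ $\mathcal{G}$-equivariantly with $\mathrm{End}(V^{\otimes k})$ (your trace pairing $\langle S,T\rangle=\mathrm{tr}(ST)$ is the same identification the paper states via $\mathrm{End}(V)\cong V\otimes V^{*}$, since for $T=\underline{v}\otimes\underline{\phi}$ one has $\mathrm{tr}(ST)=\underline{\phi}(S\underline{v})$), invoke Regev's double centralizer to conclude $\mathrm{End}_{\mathcal{G}}(V^{\otimes k})$ is the image of $A\sim S_k$, and then match $\mathrm{tr}\bigl(\rho(w)(x_1\otimes\cdots\otimes x_k)\bigr)$ with $tr_w$. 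The only thing you leave as ``bookkeeping'' --- the verification that $\underline{\phi}\bigl((a_1\otimes\cdots\otimes a_k)\sigma\,\underline{v}\bigr)=tr_\sigma(x_1a_1,\ldots,x_ka_k)$ --- is exactly the six-line computation the paper carries out in full.
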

\begin{proof}
By multilinearization we need only consider the multilinear maps.  Identifying
$M_n(F)$ with $End(V)$ for an $n$-dimensional vector space~$V$, we consider
$[(End(V)^{\otimes k})^*]^\mathcal{G}$.  
Now, as $G$-modules
\begin{align*}
(End(V)^{\otimes k})^*&\cong \left((V\otimes V^*)^{\otimes k} \right)^*\\
&\cong (V^{\otimes k}\otimes V^{*\otimes k})^*\\
&\cong End(V^{\otimes k}).
\end{align*}
The last isomorphism requires comment.  Let $T\in End(V^{\otimes k})$.
We associate to~$T$ the functional on $V^{\otimes k}\otimes V^{*\otimes k}$
which takes $\underline{v}\otimes \underline{\phi}$ to $\underline{\phi}
(T(\underline{v}))$.  Hence, the $\mathcal{G}$ invariants come from
$End_{\mathcal{G}}(V^{\otimes k})$ which equals $A\sim S_k$.

To complete the proof let $x_i=v_i\otimes \phi_i\in V\otimes V^*\equiv End(V)$, let
$(a_1\ootimes a_k)\sigma\in A\sim S_k$ and compute
\begin{align*}
\underline{\phi}((a_1\ootimes a_k)\sigma(\underline{v}))&=
\underline{\phi}(a_1v_{\sigma^{-1}(1)}\ootimes a_kv_{\sigma^{-1}(k)})\\
&=\phi_1((a_1v_{\sigma^{-1}(1)})\cdots \phi_k(a_kv_{\sigma^{-1}(k)})\\
&=\phi_1\circ a_1(v_{\sigma^{-1}(1)})\cdots \phi_k\circ a_k(v_{\sigma^{-1}(k)})\\
&=tr_{\sigma}(v_1\otimes\phi_1 a_1,\ldots,v_k\otimes\phi_ka_k)\\
&=tr_{\sigma}(x_1a_1,\ldots,x_ka_k)\\
&=tr_{(a_1\ootimes a_k)\sigma}(x_1,\ldots,x_k)
\end{align*}
and this completes the proof.
\end{proof}

As a corollary to the proof we have
\begin{cor} A multilinear pure $A$-polynomial $tr_w$ is an $A$-identity for
$E=End(V)$ if and only if $w$ is in the kernal of the map $A\sim S_k\rightarrow
End(V^{\otimes k})$.\label{cor:atr}\end{cor}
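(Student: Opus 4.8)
The plan is to read the corollary directly off the explicit computation already carried out in the proof of Theorem~\ref{thm:atr}. For a monomial $w=(a_1\ootimes a_k)\sigma\in A\sim S_k$ that proof established, with $x_i=v_i\otimes\phi_i$, the identity
$$tr_w(v_1\otimes\phi_1,\ldots,v_k\otimes\phi_k)=\underline\phi\big(w(\underline v)\big),$$
where $\underline v=v_1\ootimes v_k$, $\underline\phi=\phi_1\ootimes\phi_k$, and $w$ acts on $V^{\otimes k}$ through its image in $End(V^{\otimes k})$. Both sides are additive in $w$ (the left side by Definition~\ref{def:mult}, the right side since the action and the evaluation pairing are linear), so the same formula holds for an arbitrary element $w\in A\sim S_k$.

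First I would reduce the statement ``$tr_w$ is an $A$-identity for $End(V)$'' to a statement about rank-one substitutions. Since $tr_w$ is multilinear in $x_1,\dots,x_k$, it vanishes on $End(V)^k$ if and only if it vanishes whenever each $x_i$ runs over a fixed spanning set of $End(V)$; the rank-one operators $v\otimes\phi$ form such a spanning set. Hence $tr_w$ is an $A$-identity for $End(V)$ if and only if $\underline\phi\big(w(\underline v)\big)=0$ for all $v_1,\dots,v_k\in V$ and all $\phi_1,\dots,\phi_k\in V^*$.

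Next I would observe that this condition says exactly that $w$ lies in the kernel of $A\sim S_k\to End(V^{\otimes k})$. The pure tensors $\underline v$ span $V^{\otimes k}$ and the pure tensors $\underline\phi$ span $(V^{\otimes k})^*=V^{*\otimes k}$, so the bilinear form $(\underline v,\underline\phi)\mapsto\underline\phi\big(w(\underline v)\big)$ vanishes on all pairs of pure tensors precisely when it vanishes identically, which in turn happens precisely when the operator that $w$ induces on $V^{\otimes k}$ is zero. Combining the two equivalences gives the corollary.

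I do not expect a real obstacle here: the corollary is essentially a bookkeeping consequence of the chain of isomorphisms and the trace computation already present in the proof of Theorem~\ref{thm:atr}. The only points needing mild care are extending the monomial formula to all of $A\sim S_k$ by linearity, and the elementary fact that a multilinear polynomial map vanishing on the $k$-th power of a spanning set vanishes everywhere. If a more concrete description is wanted, Regev's decomposition $A\sim S_k=\bigoplus I_{\langle\lambda\rangle}$, together with the fact that $I_{\langle\lambda\rangle}$ acts as zero on $V^{\otimes k}$ exactly when some $\lambda_i$ exceeds the relevant height bound, identifies this kernel explicitly.
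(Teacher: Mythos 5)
Your proposal is correct and is exactly the reasoning the paper is alluding to when it says ``as a corollary to the proof we have'': you read the equivalence directly off the identity $\underline\phi(w(\underline v))=tr_w(x_1,\ldots,x_k)$ established in the proof of Theorem~\ref{thm:atr}, extended by linearity, together with multilinearity and the spanning of $End(V)$ by rank-one operators. There is no divergence from the paper's (implicit) argument.
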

Theorem \ref{thm:atr} can be restated in terms of generic matrices.  Let $R(A,n,k)$
be the ring generated by the generic $n\times n$ matrices $X_1,\ldots,X_k$,
$X_\alpha=(x_{ij}^{(\alpha)})$, 
together with $A\subseteq M_n(F)$; and let $\bar{C}(A,n,k)$ be the ring
generated by the traces of elements of $R(A,n,k)$.  The group $\mathcal{G}$
acts on the polynomial ring $F[x_{ij}^{(\alpha)}]$ via by defining $g(x_{ij}^{(\alpha)})$
to be the $(i,j)$ entry of $gX_\alpha g^{-1}$.  Just as in the classical case
Theorem~\ref{thm:atr} is equivalent to the following.
\begin{thm} The fixed ring $F[x_{ij}^{(\alpha)}]^{\mathcal{G}}$ equals the
generic trace ring $\bar{C}(A,n,k)$.\end{thm}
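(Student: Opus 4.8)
\textit{Approach.} The plan is to recognize this as the ``generic matrix'' reformulation of Theorem~\ref{thm:atr}, obtained exactly as in Procesi's classical treatment (and in Domokos's Theorem~\ref{thm:d2}): the representation-theoretic first fundamental theorem is converted into a statement identifying the generic trace ring with the fixed ring of the polynomial ring. First I would record the dictionary. Since $F$ is infinite, elements of $F[x_{ij}^{(\alpha)}]$ are the same as polynomial functions $M_n(F)^k\to F$, and by definition the $\mathcal{G}$-action on $F[x_{ij}^{(\alpha)}]$ is the one dual to simultaneous conjugation of $k$-tuples of matrices by $\mathcal{G}$; hence $F[x_{ij}^{(\alpha)}]^{\mathcal{G}}$ is precisely the ring of polynomial maps $M_n(F)^k\to F$ invariant under simultaneous conjugation from $\mathcal{G}$.

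\textit{The easy inclusion.} For $\bar{C}(A,n,k)\subseteq F[x_{ij}^{(\alpha)}]^{\mathcal{G}}$ I would argue directly. The ring $\bar{C}(A,n,k)$ is generated, as a commutative ring, by traces $tr(a_1X_{i_1}a_2\cdots a_mX_{i_m})$ with $a_j\in A$. Each $g\in\mathcal{G}$ lies in $End_A(V)$, so it commutes with every $a\in A$, while $g$ sends $X_\alpha$ to $gX_\alpha g^{-1}$; therefore
$$g\cdot tr(a_1X_{i_1}a_2\cdots a_mX_{i_m})=tr\bigl(a_1(gX_{i_1}g^{-1})a_2\cdots a_m(gX_{i_m}g^{-1})\bigr)=tr\bigl(g(a_1X_{i_1}a_2\cdots a_mX_{i_m})g^{-1}\bigr),$$
which equals $tr(a_1X_{i_1}a_2\cdots a_mX_{i_m})$ by conjugation-invariance of the trace. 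Since the fixed ring is a ring containing all these generators, it contains $\bar{C}(A,n,k)$.

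\textit{The reverse inclusion.} Take $f\in F[x_{ij}^{(\alpha)}]^{\mathcal{G}}$ and regard it, via the dictionary, as an invariant polynomial map $M_n(F)^k\to F$. Theorem~\ref{thm:atr} says every such map is a pure $A$-trace polynomial, i.e.\ an $F$-linear combination of products of expressions $tr(a_1x_{i_1}a_2\cdots a_mx_{i_m})$. Substituting the generic matrices $X_\alpha$ for the $x_\alpha$, and using again that polynomials over an infinite field agreeing as functions are equal, $f$ coincides with the element of $F[x_{ij}^{(\alpha)}]$ thus produced, and that element lies in $\bar{C}(A,n,k)$ by construction. One should note that Theorem~\ref{thm:atr} is proved by first reducing to the multilinear case, where Corollary~\ref{cor:atr} describes the relations; so strictly one polarizes $f$ to get multilinear invariants in auxiliary variables, applies the multilinear statement, and restitutes. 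Restitution only forces repeated indices among the $x_{i_\alpha}$ occurring inside the $A$-traces, so the outcome is still a pure $A$-trace polynomial and, on generic matrices, still lies in $\bar{C}(A,n,k)$ (the scalar $\prod d_i!$ introduced by polarization/restitution is invertible in characteristic zero).

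\textit{Main obstacle.} I do not expect a genuine difficulty here, since the content is entirely carried by Theorem~\ref{thm:atr}; what remains is formal. The one place demanding care is checking that the polarization/restitution bookkeeping does not leave the class of pure $A$-trace polynomials --- equivalently, that the generators of $\bar{C}(A,n,k)$ are stable under the operations used in restitution --- and that the $\mathcal{G}$-action on $F[x_{ij}^{(\alpha)}]$ really is the simultaneous-conjugation action in disguise, so that Theorem~\ref{thm:atr} applies verbatim.
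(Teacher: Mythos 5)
Your proposal is correct and follows the same route the paper intends: the paper does not spell out a separate argument but simply asserts, ``Just as in the classical case Theorem~\ref{thm:atr} is equivalent to the following,'' and your write-up is exactly the standard dictionary (polynomials as polynomial functions over an infinite field, the easy inclusion by direct conjugation-invariance of trace monomials in $R(A,n,k)$, the reverse inclusion from Theorem~\ref{thm:atr} with polarization/restitution) that makes that equivalence precise.
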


Since $A$ is a semisimple algebra it can be written as a direct sum of simple
algebras $A=\sum_{i=1}^t  A_i$.  Let $e_i$ be a two sided idempotent in
$A_i$ and assume $A_i\cong M_{n_i}(F)$.  The pure trace idenities of $M_n(F)$
are generated by the Cayley-Hamilton polynomial
$$C_{n+1}(x_1,\ldots,x_{n+1})=\sum_{\sigma\in S_{n+1}}(-1)^\sigma tr_\sigma(x_1,
\ldots,x_{n+1}).$$
In our case $A$ will satisfy the identities
\begin{equation}C_{n_i+1}(x_1e_i,\ldots,x_{n_i+1}e_i).\label{eq:ch}\end{equation}
Similar to both the classical case and Domokos' generalization we have the following
theorem
\begin{thm}\label{thm:rp} All pure $A$-trace identities for $M_n(F)$ are consequences of
those in \eqref{eq:ch}, where $i=1,\ldots,t$.\end{thm}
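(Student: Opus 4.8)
The plan is to run the Procesi--Razmyslov argument (see \cite{P}, and, for its $E$-trace version, Domokos' proof of Theorem~\ref{thm:d3}) through Regev's double centralizer theorem. First I would reduce, by multilinearization over the characteristic-zero field, to showing that for each $k$ every multilinear pure $A$-trace identity of $M_n(F)=End(V)$ of degree $k$ is a consequence of the identities \eqref{eq:ch}. By Definition~\ref{def:mult} the multilinear pure $A$-trace polynomials of degree $k$ are exactly the $tr_w$ with $w\in A\sim S_k$, and by Corollary~\ref{cor:atr} the map $w\mapsto tr_w$ identifies the space of such identities with $\ker\pi_k$, where $\pi_k\colon A\sim S_k\to End(V^{\otimes k})$ is the structure map. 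Regev's theorem makes $A\sim S_k=\bigoplus_{\langle\lambda\rangle}I_{\langle\lambda\rangle}$ semisimple and gives $\ker\pi_k=\bigoplus_{\langle\lambda\rangle\in B_k}I_{\langle\lambda\rangle}$, where $B_k$ is the set of $t$-tuples $\langle\lambda\rangle=(\lambda_1,\dots,\lambda_t)$ with $\sum|\lambda_i|=k$ and $\mathrm{ht}(\lambda_i)>n_i$ for at least one $i$; here $e_i$ is taken to be a minimal idempotent of $A_i$, so that $e_iM_n(F)e_i\cong M_{n_i}(F)$, this $n_i$ being exactly Regev's height bound for the $i$th component and the integer appearing in \eqref{eq:ch}, which is what makes \eqref{eq:ch} genuinely an $A$-trace identity of $M_n(F)$. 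The whole statement then reduces to the following assertion about the wreath products: the $A$-trace $T$-ideal generated by the identities \eqref{eq:ch} meets degree $k$ in exactly $\bigoplus_{\langle\lambda\rangle\in B_k}I_{\langle\lambda\rangle}$.

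Next I would pin down the element of $A\sim S_{n_i+1}$ representing \eqref{eq:ch}. Sandwiching the Cayley--Hamilton polynomial by $e_i$ and using cyclicity of the trace gives, via Definition~\ref{def:mult},
$$C_{n_i+1}(x_1e_i,\dots,x_{n_i+1}e_i)=tr_{W_i},\qquad W_i=\bigl(e_i\ootimes e_i\bigr)\cdot\sum_{\sigma\in S_{n_i+1}}(-1)^\sigma\,(1\ootimes 1,\sigma)\ \in\ A\sim S_{n_i+1},$$
so $W_i$ is the antisymmetrizer on $n_i+1$ letters multiplied by the ``colour-$i$'' idempotent $e_i^{\otimes(n_i+1)}$. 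Then I would invoke the standard dictionary of \cite{P}, in its $A$-coloured form, relating the operations that define a trace $T$-ideal --- linear combinations, substitution of trace monomials and of $A$-words $ax_jb$ for the variables, and multiplication by pure trace polynomials --- to operations in the family $A\sim S_\bullet$ --- two-sided ideals, left and right multiplication (the $A$-multiplications acting by modifying the tensor components), and the stabilization embeddings $\iota\colon A\sim S_{n_i+1}\hookrightarrow A\sim S_k$ on the first $n_i+1$ positions. This identifies the degree-$k$ part of the $T$-ideal generated by the $tr_{W_i}$ with $J_k:=\sum_{i=1}^t(A\sim S_k)\,\iota(W_i)\,(A\sim S_k)$; the one subtlety is the familiar point that an ideal generated by an antisymmetrizer already absorbs the lower-degree substitutions, so that no further closure is forced. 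This is Procesi's verification, transcribed by Domokos to $E$-traces, and the extra colour data does not interfere.

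The heart of the matter, and the step I expect to be the main obstacle, is the computation of $J_k$. Fix $i$. As $e_i$ is minimal in $A_i$, the idempotent $e_i^{\otimes(n_i+1)}$ is a minimal idempotent of the block $A_i\sim S_{n_i+1}$ of $A\sim S_{n_i+1}$ consisting of the terms of ``colour all $i$'', and $e_i^{\otimes(n_i+1)}(A\sim S_{n_i+1})e_i^{\otimes(n_i+1)}\cong FS_{n_i+1}$. Under the resulting Morita correspondence the left ideal $(A\sim S_{n_i+1})W_i$ goes to $FS_{n_i+1}\cdot(\text{antisymmetrizer})$, the one-dimensional sign module; hence $(A\sim S_{n_i+1})W_i$ is the simple $A\sim S_{n_i+1}$-module labelled by the single column $(1^{n_i+1})$ in colour $i$. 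Inducing to $A\sim S_k$ and decomposing by the wreath-product branching rule --- the classical Pieri rule, that $S_{(1^{n_i+1})}S_\nu$ is the sum of the $S_\lambda$ with $\lambda/\nu$ a vertical $(n_i+1)$-strip, applied colour by colour --- one finds that $(A\sim S_k)\iota(W_i)$ involves exactly the $I_{\langle\lambda\rangle}$ whose $i$th component $\lambda_i$ contains a column of length $n_i+1$, that is, has $\mathrm{ht}(\lambda_i)>n_i$. A two-sided ideal of the semisimple algebra $A\sim S_k$ is the sum of the blocks it meets, so $(A\sim S_k)\iota(W_i)(A\sim S_k)=\bigoplus_{\mathrm{ht}(\lambda_i)>n_i}I_{\langle\lambda\rangle}$, and summing over $i$ gives $J_k=\bigoplus_{\langle\lambda\rangle\in B_k}I_{\langle\lambda\rangle}=\ker\pi_k$. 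Together with the first two steps this proves the theorem.

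The principal difficulty is thus representation-theoretic: carrying out the wreath-product bookkeeping --- the Morita reduction of the colour-$i$ block to $FS_{n_i+1}$, the identification of the image of the coloured antisymmetrizer with the sign module, and the colour-by-colour branching rule --- so that it interlocks cleanly with the classical $S_{n_i+1}$-input of Procesi and Razmyslov. The operations dictionary that yields $J_k$ is, by contrast, routine but not vacuous: it amounts to transcribing Procesi's argument from \cite{P} into the $A$-coloured setting, just as Domokos did for Theorem~\ref{thm:d3} in the $E$-trace case.
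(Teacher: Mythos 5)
Your proof is correct and follows essentially the same chain of reductions as the paper's: multilinearize; identify the multilinear identities of degree~$m$ with $\ker\bigl(A\sim S_m\to End(V^{\otimes m})\bigr)$ via Corollary~\ref{cor:atr}; describe that kernel via Theorem~\ref{thm:ker}; and reduce to the degree-$(n_i+1)$ generators using the branching of wreath-product ideals (Lemma~\ref{lem:branch}) together with the fact that induced-up ideals consist of consequences (Lemma~\ref{lem:ind}). The one place you diverge is that you unpack the Branching Lemma by hand (Morita reduction of the colour-$i$ corner $e_i^{\otimes(n_i+1)}(A\sim S_{n_i+1})e_i^{\otimes(n_i+1)}\cong FS_{n_i+1}$, then Pieri) rather than citing~\cite{GR}; this buys nothing new logically, but it has the side benefit of forcing you to identify $I_{\lambda(i)}$ as the \emph{two-sided ideal generated by} $W_i$, which is the correct statement. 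The paper instead asserts that the ideals $I_{\lambda(i)}$ ``are one dimensional,'' which holds only when $A_i\cong F$ (Domokos' case); for $A_i\cong M_{a_i}(F)$ with $a_i>1$ one has $I_{\lambda(i)}\cong M_{a_i^{\,n_i+1}}(F)$, and the argument really needs only what you prove --- that $I_{\lambda(i)}$ is simple and contains the nonzero element $W_i$, hence is generated by it.
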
 
Since it is an oft told tale at this point, we merely sketch the proof.
By a multilinearization argument, it suffices to prove the theorem for multilinear
identities.    As in the classical
case we have the following lemma.
\begin{lem} Given $w\in A\sim S_k$, $\sigma\in S_k$ and invertible
$b=B_1\otimes\cdots\otimes B_k\in A^{\otimes k}$,
$$tr_{\sigma w\sigma^{-1}}(x_1,\ldots,x_k)=tr_w(x_{\sigma(1)},\ldots,x_{\sigma(k)})$$
and
$$tr_{bwb^{-1}}(x_1,\ldots,x_k)=tr_w(B_1^{-1}x_1B_1,\ldots,B_k^{-1}x_kB_k).$$
\end{lem}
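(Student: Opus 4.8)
The plan is to reduce both identities to the case of a single monomial $w$ and then perform two essentially bookkeeping computations inside the wreath product, the only substantive move being one application of the cyclic invariance of the trace. By Definition~\ref{def:mult} the assignment $w \mapsto tr_w$ is additive in $w$, so it suffices to treat $w = (a_1 \ootimes a_k, \pi)$ with $\pi \in S_k$. Throughout I will use the standard description of $A \sim S_k$ as $A^{\otimes k} \rtimes S_k$, with $S_k$ acting on tensor slots exactly as on $V^{\otimes k}$ (the $i$-th slot of $\sigma(b_1 \ootimes b_k)$ being $b_{\sigma^{-1}(i)}$), with multiplication $(\underline a, \sigma)(\underline b, \tau) = (\underline a \cdot \sigma(\underline b), \sigma\tau)$, and with $\sigma$ sitting inside as $(1 \ootimes 1, \sigma)$ and $b = B_1 \ootimes B_k$ as $(B_1 \ootimes B_k, e)$.

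For the first identity, a direct computation in the semidirect product gives $\sigma w \sigma^{-1} = (a_{\sigma^{-1}(1)} \ootimes a_{\sigma^{-1}(k)},\, \sigma\pi\sigma^{-1})$, so that $tr_{\sigma w \sigma^{-1}}(x_1,\ldots,x_k) = tr_{\sigma\pi\sigma^{-1}}(x_1 a_{\sigma^{-1}(1)}, \ldots, x_k a_{\sigma^{-1}(k)})$. Separately I would record the purely symmetric-group fact that $tr_{\sigma\rho\sigma^{-1}}(y_1,\ldots,y_k) = tr_\rho(y_{\sigma(1)},\ldots,y_{\sigma(k)})$ for every $\rho \in S_k$: conjugation carries a cycle $(i_1,\ldots,i_a)$ of $\rho^{-1}$ to the cycle $(\sigma(i_1),\ldots,\sigma(i_a))$ of $(\sigma\rho\sigma^{-1})^{-1}$, and the associated trace factor $tr(y_{\sigma(i_1)}\cdots y_{\sigma(i_a)})$ is exactly the $(i_1,\ldots,i_a)$-factor of $tr_\rho$ evaluated at the reindexed tuple. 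Applying this with $\rho = \pi$ and $y_j = x_j a_{\sigma^{-1}(j)}$, and using $y_{\sigma(j)} = x_{\sigma(j)} a_j$, collapses the right-hand side to $tr_\pi(x_{\sigma(1)} a_1, \ldots, x_{\sigma(k)} a_k) = tr_w(x_{\sigma(1)},\ldots,x_{\sigma(k)})$.

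For the second identity, the analogous computation in $A^{\otimes k} \rtimes S_k$ (now the $S_k$-component of $w$ is unchanged) yields $bwb^{-1} = (B_1 a_1 B_{\pi^{-1}(1)}^{-1} \ootimes B_k a_k B_{\pi^{-1}(k)}^{-1},\, \pi)$, hence $tr_{bwb^{-1}}(x_1,\ldots,x_k) = tr_\pi(x_1 B_1 a_1 B_{\pi^{-1}(1)}^{-1}, \ldots, x_k B_k a_k B_{\pi^{-1}(k)}^{-1})$. I would then expand $tr_\pi$ over the cycles of $\pi^{-1}$: a cycle $(i_1,\ldots,i_a)$ contributes $tr\big(x_{i_1}B_{i_1}a_{i_1}B_{i_2}^{-1} \cdot x_{i_2}B_{i_2}a_{i_2}B_{i_3}^{-1} \cdots x_{i_a}B_{i_a}a_{i_a}B_{i_1}^{-1}\big)$, and cyclic invariance of the trace moves the trailing $B_{i_1}^{-1}$ to the front, producing $tr\big((B_{i_1}^{-1}x_{i_1}B_{i_1})a_{i_1}\cdots(B_{i_a}^{-1}x_{i_a}B_{i_a})a_{i_a}\big)$, which is precisely the $(i_1,\ldots,i_a)$-contribution to $tr_w(B_1^{-1}x_1B_1,\ldots,B_k^{-1}x_kB_k)$. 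Taking the product over all cycles finishes the argument.

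The two semidirect-product computations and the cycle bookkeeping are routine; the one genuine step is the cyclic rotation of the leftover $B$-factor inside the trace in the second identity. The main danger, as always in this kind of argument, is convention drift — keeping straight that $tr_\sigma$ is built from the cycle type of $\sigma^{-1}$, and tracking which of $\pi, \pi^{-1}, \sigma, \sigma^{-1}$ appears at each stage — so I would pin down all conventions at the outset and sanity-check the case $k = 2$ (a single transposition, first with scalar $a_i, B_i$ and then in general) before writing the general argument. I expect the index bookkeeping in the first identity to be the fussiest part.
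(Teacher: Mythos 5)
The paper states this lemma without proof, with the remark that it is ``as in the classical case,'' so there is nothing in the text to compare against line by line. Your argument is correct and supplies exactly the kind of computation the author is alluding to: reduce by linearity to a single monomial $w=(a_1\ootimes a_k,\pi)$, compute $\sigma w\sigma^{-1}$ and $bwb^{-1}$ in the semidirect product $A^{\otimes k}\rtimes S_k$, and unwind Definition~\ref{def:mult}. Both of your semidirect-product computations are right, the auxiliary fact $tr_{\sigma\rho\sigma^{-1}}(y_1,\ldots,y_k)=tr_\rho(y_{\sigma(1)},\ldots,y_{\sigma(k)})$ is handled correctly with the ``cycles of the inverse'' convention, and the single use of cyclic invariance of the trace to rotate the trailing $B_{i_1}^{-1}$ in the second identity is exactly the needed step. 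In short: correct, and consistent with what the paper's ``as in the classical case'' is pointing at.
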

Given an algebra $R$ with $A$-action and trace, let $W_k$ be the set of
$w\in A\sim S_k$ such that $tr_w$ is an identity for~$R$.  The lemma 
shows that $W_k$ is closed under conjugation, but it may not be closed
under right or left multiplication.  We use it to prove the 
following.
\begin{lem} Assume that $W_k$ is a two-sided ideal of~$A\sim S_k$.  Then for all
$m\ge k$ the elements of $(A\sim S_m)W_k(A\sim S_m)$ are consequences
of $W_k$ and hence identities for~$R$.\label{lem:ind}
\end{lem}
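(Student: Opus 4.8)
The plan is to run the classical Razmyslov--Procesi argument, keeping track of the extra structure coming from $A$. By multilinearization we may restrict to multilinear pure $A$-trace polynomials, and we identify the degree-$m$ multilinear ones with $A\sim S_m$ via $w\mapsto tr_w$ (Definition~\ref{def:mult}). Let $J_m\subseteq A\sim S_m$ be the set of those $w'$ whose trace polynomial $tr_{w'}$ is a consequence of $\{tr_u:u\in W_k\}$; the goal is $(A\sim S_m)W_k(A\sim S_m)\subseteq J_m$, where $W_k$ is viewed inside $A\sim S_m$ through the unital embedding $\iota\colon A\sim S_k\hookrightarrow A\sim S_m$ which pads the tensor factors by $1$ and the permutations by fixing $k+1,\dots,m$.

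First I would record the elementary stability properties of $J_m$. For $u\in W_k$ one has $tr_{\iota(u)}(x_1,\dots,x_m)=tr_u(x_1,\dots,x_k)\,tr(x_{k+1})\cdots tr(x_m)$, so $\iota(W_k)\subseteq J_m$, and $J_m$ is clearly stable under multiplying $tr_{w'}$ by further trace monomials. By the Lemma just proved, conjugating an element of $A\sim S_m$ by a permutation or by an invertible element of $A^{\otimes m}$ corresponds to a legal substitution ("relabel the $x_i$", resp.\ "replace $x_i$ by $B_i^{-1}x_iB_i$"), so $J_m$ is closed under conjugation. Since $A\cong\bigoplus M_{n_i}(F)$ is linearly spanned by invertible matrices, $A\sim S_m$ is spanned by its units, and hence $(A\sim S_m)W_k(A\sim S_m)$ is spanned by elements $u\,\iota(w)\,u'$ with $u,u'$ units and $w\in W_k$. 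Conjugating such an element by $u^{-1}$, and then using cyclicity of the trace together with the legal substitutions $x\mapsto ax,\ x\mapsto xa\ (a\in A)$ and relabelings, a short direct computation with Definition~\ref{def:mult} shows $tr_{u\iota(w)u'}$ is a consequence of $tr_{\mu\,\iota(w)}$, where $\mu\in S_m$ is the permutation part of $u'u$. Thus the whole lemma reduces to the single assertion: for every $w\in W_k$ and every $\theta\in S_m$, $tr_{\theta\,\iota(w)}\in J_m$.

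I would prove this by induction on $m$. For $m=k$ there is nothing to prove: $(A\sim S_k)W_k(A\sim S_k)=W_k$ because $W_k$ is a two-sided ideal, so $\theta w\in W_k$ and $tr_{\theta w}$ is an identity of $R$. Let $m\ge k+1$ and look at the point $m$, which is a fixed point of $\iota(w)$ since $m>k$. If $\theta(m)=m$, write $\theta=\iota_{m-1\to m}(\theta'')$ with $\theta''\in S_{m-1}$; then $\theta\,\iota_{k\to m}(w)=\iota_{m-1\to m}(\bar w)$ where $\bar w:=\theta''\,\iota_{k\to m-1}(w)\in A\sim S_{m-1}$, so $tr_{\theta\iota(w)}=tr_{\bar w}\cdot tr(x_m)$ and $tr_{\bar w}\in J_{m-1}$ by the inductive hypothesis. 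If instead $\theta(m)=\ell\ne m$, write $\theta=(\ell,m)\,\iota_{m-1\to m}(\theta'')$ with $\theta''\in S_{m-1}$; put $\bar w:=\theta''\,\iota_{k\to m-1}(w)$, so again $tr_{\bar w}\in J_{m-1}$, and now the crucial point is that left multiplication of $\iota_{m-1\to m}(\bar w)$ by the transposition $(\ell,m)$, with $m$ a "new" point, is realized by the substitution $x_\ell\mapsto x_\ell x_m$: comparing the two sides against Definition~\ref{def:mult} gives $tr_{(\ell,m)\iota_{m-1\to m}(\bar w)}(x_1,\dots,x_m)=tr_{\bar w}(x_1,\dots,x_\ell x_m,\dots,x_{m-1})$, which is a consequence of $tr_{\bar w}$, hence of $W_k$. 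This closes the induction.

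I expect the main obstacle to be the bookkeeping of the $A$-coefficients: first in the reduction step, where every position that an element of $A$ can take inside a trace factor must be brought back to the normal form of Definition~\ref{def:mult} by legal substitutions and conjugations (using that $A$ is spanned by its units and that $W_k$ is conjugation-closed); and then in the inductive step, the verification that $x_\ell\mapsto x_\ell x_m$ really does implement left multiplication by a transposition at the cost of one new point, with the coefficients redistributed exactly as the product in $A\sim S_m$ prescribes. None of these steps is individually difficult — they are the same combinatorics used in the classical Razmyslov--Procesi theorem, now in $A\sim S_m$ rather than $F[S_m]$ — but assembling them so that the hypothesis that $W_k$ is a two-sided ideal can be invoked at the right place (here only in the base case $m=k$) is where care is needed.
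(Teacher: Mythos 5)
Your proposal is correct and follows essentially the same strategy as the paper's sketch: both induct on $m$, both use the conjugation lemma to reduce from two-sided to (after bookkeeping) left multiplication by a permutation, and both split into the case where the new point $m$ is fixed (yielding a free factor $tr(x_m)$) and the case $\theta(m)=\ell\ne m$, handled by the key observation that left multiplication by $(\ell,m)$ with $m$ a fresh index is realized by the substitution $x_\ell\mapsto x_\ell x_m$ (the paper writes $x_i\mapsto x_{k+1}b_{k+1}x_i$; the left/right order is a permutation-composition convention). The only cosmetic difference is in how the $A$-coefficients are cleared: you span $A\sim S_m$ by units and absorb the coefficients via conjugation and substitution before inducting, while the paper chooses coset representatives $(a_1\ootimes a_{k+1})\sigma$ of $A\sim S_{k+1}$ over $A\sim S_k$ and uses the closure of $W_k$ under $A^{\otimes k}$-multiplication inside the induction step.
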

\begin{proof}[Sketch of Proof]  Using induction it suffices to do the case of $m=k+1$.
Using the previous lemma it suffices to consider $(A\sim S_m)W_k$.  And, since
$W_k$ is closed under multiplication from $A\sim S_k$, we reduce to 
the product $(a_1\ootimes a_{k+1}\sigma)u$ where $u\in W_k$ and either
$\sigma$ is the identity or a transposition of the form $(i,k+1)$.  In the
 case of $\sigma$ equal to the identity it is not hard to prove that
 $$tr_{a_1\ootimes a_{k+1}u}(x_1,\ldots,x_{k+1})=tr_{a_1\ootimes a_k u}(x_1,
 \ldots,a_k)tr(x_{k+1}a_{k+1}).$$
 Since $W_k$ is closed under multiplication by $A^{\otimes k}$, this is certainly
 a consequence of an element of $W_k$.
 
 Finally, in the case of $\sigma=(i,k+1)$ we re-write $(a_1\ootimes a_{k+1})\sigma$
 as $\sigma(b_1\ootimes b_{k+1})$ and, again use fact that $W_k$ is closed under
 multiplication from $A^{\otimes k}$ to reduce to $(i,k+1)(1\ootimes 1\otimes b_{k+1})u$.
 We leave it to the reader to prove in this case that
 $$tr_{(i,k+1)(1\ootimes 1\otimes b_{k+1})u}(x_1,\ldots,x_{k+1})=
 tr_u(x_1,\ldots,x_k)\vert_{x_i=x_{k+1}b_{k+1}x_i}.$$
 \end{proof}
 
 By Corollary~\ref{cor:atr} $W_k$, the identities of $M_n(F)$ in $A\sim S_k$
 will be a two-sided ideal, identified with the kernal of the map $A\sim S_k\rightarrow
 End(V)$.  The two sided ideals $I_{\langle\lm\rangle}$  $A\sim S_k$ are parameterized by $t$-tuples
 of partitions $\langle \lambda\rangle=(\lm_1,\ldots,\lm_t)$ with $\sum |\lm_i|=k$.
The proof of the following is the same as the proof of  Theorem~2.1(ii) in \cite{D}.
 \begin{thm} The kernal of the map $A\sim S_k\rightarrow End(V^{\otimes k})$
 equals the sum of all ideals $I_{\langle\lm\rangle}$ with some $\lm_i$
 of height greater that $n_i$.\label{thm:ker}\end{thm}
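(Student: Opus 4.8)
The plan is to obtain this theorem as the ``kernel'' reformulation of the double centralizer property recalled in Section~2, reproducing Domokos's argument for $A=F^t$ in \cite[Theorem~2.1(ii)]{D} in the present semisimple generality. Since $A\sim S_k$ is semisimple, its kernel on $V^{\otimes k}$ is a sum of some of the simple blocks $I_{\langle\lm\rangle}$, and $I_{\langle\lm\rangle}$ lies in the kernel exactly when the associated simple module $M_{\langle\lm\rangle}$ fails to occur as a constituent of $V^{\otimes k}$. So everything reduces to the combinatorial statement that $M_{\langle\lm\rangle}$ occurs in $V^{\otimes k}$ if and only if $\mathrm{ht}(\lm_i)\le n_i$ for every $i$, where the bound $n_i$ is the dimension of the $i$-th multiplicity space $T_i$, equivalently the rank of the $i$-th general linear factor of $\mathcal{G}$, exactly as in Regev's theorem.

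To prove this I would write $V=\bigoplus_i S_i\otimes T_i$ as an $A$-module, with $S_i$ the simple $A_i$-module, $T_i$ the multiplicity space, and $\mathcal{G}=\prod_i GL(T_i)$, and then decompose $V^{\otimes k}$ according to which block of $A$ ``colours'' each of the $k$ tensor slots. This gives, as $(A\sim S_k)$-modules commuting with the diagonal $\mathcal{G}$-action,
$$V^{\otimes k}\;\cong\;\bigoplus_{k_1+\cdots+k_t=k}\ \mathrm{Ind}_{\prod_i (A_i\sim S_{k_i})}^{\,A\sim S_k}\Big(\bigotimes_i (S_i\otimes T_i)^{\otimes k_i}\Big);$$
the only point needing a line of justification is that inside a fixed colour class the acting algebra is $\prod_i(A_i\sim S_{k_i})$, because the central idempotent of $A_j$ annihilates $S_i\otimes T_i$ for $j\ne i$. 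Treating one colour class at a time, $A_i$ acts on $S_i^{\otimes k_i}$ as the full endomorphism algebra, so $S_i^{\otimes k_i}\otimes S^{\mu}$ with diagonal $S_{k_i}$ is the simple $(A_i\sim S_{k_i})$-module labelled by $\mu$; combined with ordinary Schur--Weyl duality for $T_i^{\otimes k_i}$ this gives
$$(S_i\otimes T_i)^{\otimes k_i}\;\cong\;\bigoplus_{\mu\vdash k_i} M^{A_i\sim S_{k_i}}_{\mu}\otimes W^{GL(T_i)}_{\mu},$$
where $W^{GL(T_i)}_{\mu}\ne 0$ precisely when $\mathrm{ht}(\mu)\le\dim T_i=n_i$. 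Substituting back, and using that $\mathrm{Ind}_{\prod_i(A_i\sim S_{k_i})}^{A\sim S_k}\big(\boxtimes_i M_{\mu_i}\big)$ is the simple module $M_{\langle\mu\rangle}$, one finds that $M_{\langle\lm\rangle}$ occurs in $V^{\otimes k}$ with multiplicity $\dim\big(\boxtimes_i W^{GL(T_i)}_{\lm_i}\big)$, which is positive exactly when $\mathrm{ht}(\lm_i)\le n_i$ for all $i$. This is the claim, and both directions of the theorem follow at once.

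The one genuine obstacle is bookkeeping in the representation theory of the wreath products $A_i\sim S_{k_i}$, with $A_i$ a full matrix algebra: one must verify that the labelling of the simple $(A\sim S_k)$-modules by $t$-tuples of partitions and the labelling produced by the wreath-product Schur--Weyl duality agree, so that the partition $\lm_i$ governing the $GL(T_i)$-height bound is literally the partition that indexes the block $I_{\langle\lm\rangle}$. This is standard --- Clifford theory for the wreath product, together with the Morita equivalence of $A_i\sim S_{k_i}$ with $FS_{k_i}$ --- and when $A=F^t$ it is exactly the classical Schur--Weyl computation used by Domokos, so no ingredient beyond the wreath-product form of Schur--Weyl is required.
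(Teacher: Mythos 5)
Your argument is correct and reproduces the standard wreath-product Schur--Weyl decomposition of $V^{\otimes k}$ on which Regev's double centralizer theorem rests; note that the paper itself offers no self-contained proof here, only the remark that the proof is identical to that of Theorem~2.1(ii) in \cite{D}, and Section~2 already records the conclusion verbatim as part of Regev's theorem (``$I_{\langle\lambda\rangle}$ acts as zero on~$W$ precisely when some $\lambda_i$ has height greater than~$n_i$''). Your write-up supplies the argument the paper invokes by reference, with only the cosmetic bookkeeping issue you already flag: the induction in the colour-class decomposition is most cleanly phrased either from the subalgebra $A^{\otimes k}\rtimes\prod_i S_{k_i}$ of $A\sim S_k$, or as the induction through the corner $e_{c_0}(A\sim S_k)e_{c_0}\cong\prod_i(A_i\sim S_{k_i})$ determined by the colour idempotent $e_{c_0}$.
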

 In light of Lemma~\ref{lem:ind} we need to see how ideals of $A\sim S_k$ induce
 up to $A\sim S_m$.  As Domokos noted, the next lemma follows from the Appendix
 of~\cite{GR}.
 \begin{lem}[The Branching Lemma] The ideal $I_{\langle\lm\rangle}\trianglelefteq (A\sim S_k)$ induces
 up to $A\sim S_m$, $m>k$ as $\oplus I_{\langle\mu\rangle}$ summed over all
 $\langle\lm\rangle\subseteq\langle\mu\rangle$, i.e., $\lm_i\subseteq\mu_i$ for each~$i$.
 \label{lem:branch}\end{lem}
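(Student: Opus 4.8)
The plan is to deduce the statement from the classical representation theory of wreath products, reducing it to the Littlewood--Richardson rule for the symmetric group. First I would fix the structural description of $A\sim S_k$. Write $A=A_1\oplus\cdots\oplus A_t$ with $A_i\cong M_{n_i}(F)$ and $D_i$ the simple $A_i$-module; then the simple two-sided ideals of $A\sim S_k$ are the $I_{\langle\lambda\rangle}$ with $\langle\lambda\rangle=(\lambda_1,\dots,\lambda_t)$, $\sum_i|\lambda_i|=k$, and $I_{\langle\lambda\rangle}$ carries the simple module
$$S^{\langle\lambda\rangle}=\mathrm{Ind}^{A\sim S_k}_{(A\sim S_{k_1})\times\cdots\times(A\sim S_{k_t})}\bigl((D_1^{\otimes k_1}\otimes S^{\lambda_1})\boxtimes\cdots\boxtimes(D_t^{\otimes k_t}\otimes S^{\lambda_t})\bigr),\qquad k_i=|\lambda_i|,$$
where in the $i$-th factor $A^{\otimes k_i}$ acts on $D_i^{\otimes k_i}$ and $S_{k_i}$ acts simultaneously on $D_i^{\otimes k_i}$ and on the Specht module $S^{\lambda_i}$. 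This is the content of the Appendix of \cite{GR}; from the same source one obtains the ``wreath-product Littlewood--Richardson rule'' that I will actually use: for the embedding $A\sim S_k\hookrightarrow(A\sim S_k)\times(A\sim S_{m-k})\hookrightarrow A\sim S_m$ obtained by putting $1\in A$ in the last $m-k$ tensor slots and fixing the last $m-k$ letters,
$$\mathrm{Res}^{A\sim S_m}_{(A\sim S_k)\times(A\sim S_{m-k})}S^{\langle\mu\rangle}\;\cong\;\bigoplus_{\langle\lambda\rangle,\langle\rho\rangle}\Bigl(\prod_{i=1}^{t}c^{\mu_i}_{\lambda_i\rho_i}\Bigr)S^{\langle\lambda\rangle}\boxtimes S^{\langle\rho\rangle}.$$

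Next I would translate the assertion ``$I_{\langle\lambda\rangle}$ induces up to $\bigoplus I_{\langle\mu\rangle}$'' into a statement about restriction of simple modules. The two-sided ideal $J$ of $A\sim S_m$ generated by $I_{\langle\lambda\rangle}$ is the ideal generated by the central idempotent $e_{\langle\lambda\rangle}$ of $A\sim S_k$, so, decomposing $A\sim S_m=\bigoplus_{\langle\mu\rangle}I_{\langle\mu\rangle}$ into simple blocks, $J=\bigoplus_{\langle\mu\rangle}I_{\langle\mu\rangle}e_{\langle\lambda\rangle}I_{\langle\mu\rangle}$; since each $I_{\langle\mu\rangle}$ is simple as an algebra, $I_{\langle\mu\rangle}e_{\langle\lambda\rangle}I_{\langle\mu\rangle}$ is either $0$ or all of $I_{\langle\mu\rangle}$, and it is nonzero precisely when $e_{\langle\lambda\rangle}$ acts nontrivially on the block $I_{\langle\mu\rangle}$, i.e.\ on $S^{\langle\mu\rangle}$ restricted to $A\sim S_k$. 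Hence the lemma is equivalent to: $S^{\langle\lambda\rangle}$ occurs in $\mathrm{Res}^{A\sim S_m}_{A\sim S_k}S^{\langle\mu\rangle}$ (equivalently, by Frobenius reciprocity, $S^{\langle\mu\rangle}$ occurs in $\mathrm{Ind}^{A\sim S_m}_{A\sim S_k}S^{\langle\lambda\rangle}$) if and only if $\lambda_i\subseteq\mu_i$ for every $i$.

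Finally I would read off this equivalence from the branching rule of the first step. Restricting further from $A\sim S_{m-k}$ down to scalars replaces each summand $S^{\langle\lambda\rangle}\boxtimes S^{\langle\rho\rangle}$ by $\dim S^{\langle\rho\rangle}$ copies of $S^{\langle\lambda\rangle}$, so
$$\bigl[\mathrm{Res}^{A\sim S_m}_{A\sim S_k}S^{\langle\mu\rangle}:S^{\langle\lambda\rangle}\bigr]\;=\;\sum_{\langle\rho\rangle}\bigl(\dim S^{\langle\rho\rangle}\bigr)\prod_{i=1}^{t}c^{\mu_i}_{\lambda_i\rho_i},$$
which is positive exactly when there exist partitions $\rho_1,\dots,\rho_t$ with every $c^{\mu_i}_{\lambda_i\rho_i}>0$; the side condition $\sum_i|\rho_i|=m-k$ is automatic since $\sum_i|\lambda_i|=k$ and $\sum_i|\mu_i|=m$. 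By the elementary fact that $c^{\mu}_{\lambda\rho}>0$ for some partition $\rho$ iff the skew Schur function $s_{\mu/\lambda}$ is nonzero iff $\lambda\subseteq\mu$, this holds iff $\lambda_i\subseteq\mu_i$ for all $i$, which is what was wanted; thus $J=\bigoplus_{\langle\lambda\rangle\subseteq\langle\mu\rangle}I_{\langle\mu\rangle}$.

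The only genuinely nontrivial ingredient is the wreath-product Littlewood--Richardson rule of the first step, and since it is available from \cite{GR} the hard part is merely bookkeeping: matching the embedding $A\sim S_k\hookrightarrow A\sim S_m$ used in Lemma~\ref{lem:ind} (append identities of $A$, fix the extra letters) with the subalgebra to which the branching rule is applied, and then the semisimple-algebra step identifying ``ideal generated by $I_{\langle\lambda\rangle}$'' with ``blocks $I_{\langle\mu\rangle}$ on which $e_{\langle\lambda\rangle}$ acts nontrivially''. With those identifications settled the argument is routine; alternatively one can run the whole computation on the level of the wreath-product Frobenius characteristic, where restriction becomes the appropriate coproduct and ``$\lambda_i\subseteq\mu_i$'' reappears as the non-vanishing of $s_{\mu_i/\lambda_i}$, with no additional difficulty.
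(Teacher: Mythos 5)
Your argument is correct, and it is essentially the deduction the paper leaves implicit. The paper offers no proof of the Branching Lemma, only the remark that it ``follows from the Appendix of~\cite{GR}''; you have supplied the missing details, using the same source. Concretely, you reduce the ideal-theoretic statement to the module-theoretic one that $S^{\langle\lambda\rangle}$ occurs in $\mathrm{Res}^{A\sim S_m}_{A\sim S_k}S^{\langle\mu\rangle}$ if and only if $\lambda_i\subseteq\mu_i$ for all $i$, via the observation that the two-sided ideal of $A\sim S_m$ generated by $I_{\langle\lambda\rangle}$ is the one generated by the central idempotent $e_{\langle\lambda\rangle}$, together with the block decomposition of the semisimple algebra $A\sim S_m$; and you then read off the restriction multiplicity from the wreath-product Littlewood--Richardson rule and the elementary fact that $c^{\mu}_{\lambda\rho}>0$ for some $\rho$ precisely when $\lambda\subseteq\mu$. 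This is exactly the content the citation to~\cite{GR} is standing in for, and the points that need a moment's care are all handled correctly: the cross-terms $I_{\langle\mu\rangle}e_{\langle\lambda\rangle}I_{\langle\nu\rangle}$ vanish for $\mu\ne\nu$ so the generated ideal is a direct sum of whole blocks, each nonzero summand is all of its block by simplicity, and the size constraint $\sum_i|\rho_i|=m-k$ is automatically consistent with $\lambda_i\subseteq\mu_i$ given $\sum_i|\lambda_i|=k$ and $\sum_i|\mu_i|=m$.
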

 We now have all the ingrediants we need to prove Theorem~\ref{thm:rp}.
 \begin{proof}[Proof of Theorem~\ref{thm:rp}]
 Identifying $A\sim S_m$ with multilinear pure $A$-trace polynomials as in Defintion~\ref{def:mult}, 
 Corollary~\ref{cor:atr} says that the identities of $M_n(F)$ equals the kernal of the
 map $A\sim S_m\rightarrow End(V^{\otimes m})$.  Next, Theorem~\ref{thm:ker}
 identifies that kernal as the sum of the ideals $I_{\langle\mu\rangle}$ with some
 $\mu_i$ of height greater than~$n_i+1$.
 A partition $\mu_i$ will have height greater than or equal to $n_i+1$ if and only if
 it contains the partition $(1^{n_i+1})$.  By Lemma~\ref{lem:branch} the ideals 
 $I_{\langle\mu\rangle}$ are induced from the ideals $I_{\lm(i)}$
 where $\lm(i)=(\lm_1,\ldots,\lm_i)$ and 
 $$\lm_j=\begin{cases}\mbox{the empty partition}&\mbox{if }j\ne i\\
 (1^{n_i+1})&\mbox{if }j=i.\end{cases}$$
 By Lemma~\ref{lem:ind} all identities of $M_n(F)$ are consequnces of those
 in $I_{\lm(i)}$.  These ideals are one dimensional and correspond to the
 polynomials in~\eqref{eq:ch}.\end{proof}
The results of this section all have analogues for mixed trace
identities and are easily derived from them using the facts that
$f(x_1,\ldots,x_k)$ is a mixed trace identity if and only if
$tr(f(x_1,\ldots,x_k)x_{k+1})$ is a pure trace identity, and every
pure trace identity $g(x_1,\ldots,x_{k+1})$ can be written in the
form $tr(f(x_1,\ldots,x_k)x_{k+1})$ for a pure trace identity $f$.
Since the techniques are standard we merely state the results.
\begin{thm} The $A$-mixed trace polynomials give all function $f:M_n(F)^k\rightarrow M_n(F)$ 
invariant under simultaneous conjugation from~$\mathcal{G}$.
\end{thm}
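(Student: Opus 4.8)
The plan is to reduce this to the pure-trace invariant theorem, Theorem~\ref{thm:atr}, by pairing $f$ against a fresh variable and invoking the non-degeneracy of the trace form on $M_n(F)$; this is the same mechanism by which Corollary~\ref{cor:2.3} was obtained from Theorem~\ref{thm:d1} in the $E$-polynomial setting. Concretely, start with a $\mathcal{G}$-equivariant polynomial map $f\colon M_n(F)^k\to M_n(F)$, so $f(gx_1g^{-1},\ldots,gx_kg^{-1})=gf(x_1,\ldots,x_k)g^{-1}$ for all $g\in\mathcal{G}$, introduce a new variable $x_{k+1}$, and set $\Phi(x_1,\ldots,x_{k+1})=tr\bigl(f(x_1,\ldots,x_k)\,x_{k+1}\bigr)$. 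Using the equivariance of $f$ together with the conjugation-invariance and cyclicity of the trace, one checks in one line that $\Phi$ is invariant under simultaneous conjugation from $\mathcal{G}$, so Theorem~\ref{thm:atr} applies and $\Phi$ is a pure $A$-trace polynomial in $x_1,\ldots,x_{k+1}$.

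Next exploit that $\Phi$ is homogeneous of degree one in $x_{k+1}$, since $x_{k+1}$ does not occur in $f$. Replacing the pure-trace expression for $\Phi$ by its multihomogeneous component of degree one in $x_{k+1}$ does not change the function it represents, so we may assume that in every trace monomial occurring the variable $x_{k+1}$ appears exactly once; it then lies inside a single trace factor of the form $tr(p\,x_{k+1}\,q)$ with $p,q$ $A$-monomials in $x_1,\ldots,x_k$. By cyclicity $tr(p\,x_{k+1}\,q)=tr\bigl(qp\,x_{k+1}\bigr)$, while the remaining trace factors of that monomial combine into a pure $A$-trace polynomial in $x_1,\ldots,x_k$. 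Summing over the monomials yields $\Phi(x_1,\ldots,x_{k+1})=tr\bigl(h(x_1,\ldots,x_k)\,x_{k+1}\bigr)$ with $h=\sum_\alpha g_\alpha u_\alpha$, each $g_\alpha$ a pure $A$-trace polynomial and each $u_\alpha$ an $A$-monomial, that is, $h$ is a mixed $A$-trace polynomial.

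Finally, $tr\bigl(f(\underline{x})\,x_{k+1}\bigr)=tr\bigl(h(\underline{x})\,x_{k+1}\bigr)$ for every $x_{k+1}\in M_n(F)$, so for each fixed $\underline{x}$ the matrix $f(\underline{x})-h(\underline{x})$ is orthogonal to all of $M_n(F)$ under the trace bilinear form; non-degeneracy of that form forces $f=h$, and $f$ is a mixed $A$-trace polynomial. I expect the only step requiring care to be the bookkeeping of the second paragraph, namely confirming that isolating the degree-one-in-$x_{k+1}$ part and rewriting by cyclicity genuinely produces a mixed $A$-trace polynomial rather than something outside that class; but this is routine and parallels the classical passage from pure to mixed traces, so no real obstacle arises.
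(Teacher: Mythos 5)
Your argument is correct and is precisely the mechanism the paper relies on: pair $f$ with a fresh variable via $\Phi=tr(f\,x_{k+1})$, invoke Theorem~\ref{thm:atr} to express the invariant $\Phi$ as a pure $A$-trace polynomial, peel off the piece linear in $x_{k+1}$ and rewrite it by cyclicity as $tr(h\,x_{k+1})$ with $h$ mixed, and conclude $f=h$ by non-degeneracy of the trace form. The paper only sketches this (it spells out the same passage from pure to mixed in the $E$-polynomial setting in Section~2 and then says the $A$-case is ``standard''), so your write-up simply fills in the routine details of that sketch.
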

Let $\bar{R}(A,n,k)$ be the algebra generated by $R(A,n,k)$ and
$\bar{C}(A,n,k)$, which means that it is the algebra with trace 
generated by the generic matrices $X_1,\ldots,X_k$ and by $A$.
\begin{thm} The fixed ring $M_n(F[x_{ij}^{(\alpha)}])^{\mathcal{G}}$
 equals the ring $\bar{R}(A,n,k)$.\label{thm:5.11}\end{thm}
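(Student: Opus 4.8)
The plan is to mimic the passage from Corollary~\ref{cor:2.3} to Corollary~\ref{cor:2.4} in Domokos's setting, replacing $E$-trace polynomials by $A$-trace polynomials. Concretely, I would first set up the dictionary between elements of $M_n(F[x_{ij}^{(\alpha)}])$ and polynomial maps $M_n(F)^k\to M_n(F)$, and then invoke the preceding theorem, which already identifies the $\mathcal{G}$-equivariant such maps with mixed $A$-trace polynomials. Write $M_n(F[x_{ij}^{(\alpha)}])\cong M_n(F)\otimes F[x_{ij}^{(\alpha)}]$, where $g\in\mathcal{G}$ acts on the second factor by sending $x_{ij}^{(\alpha)}$ to the $(i,j)$ entry of $gX_\alpha g^{-1}$ and on the first factor by $a\mapsto g^{-1}ag$. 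Since $F$ is infinite, evaluation embeds this ring into the space of polynomial maps $(b_1,\dots,b_k)\mapsto M(b_1,\dots,b_k)$, and under this embedding a $\mathcal{G}$-fixed $M$ corresponds exactly to a $\mathcal{G}$-equivariant map, i.e. one with $M(gb_1g^{-1},\dots,gb_kg^{-1})=gM(b_1,\dots,b_k)g^{-1}$ for all $g\in\mathcal{G}$.

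The inclusion $\bar R(A,n,k)\subseteq M_n(F[x_{ij}^{(\alpha)}])^{\mathcal{G}}$ is the easy direction. Each generic matrix $X_\alpha$, written as $\sum_{ij}x_{ij}^{(\alpha)}e_{ij}$ inside $M_n(F)\otimes F[x_{ij}^{(\alpha)}]$, is fixed: the action on its entries turns it into $gX_\alpha g^{-1}$, which the conjugation $g^{-1}(\,\cdot\,)g$ on the matrix factor turns back into $X_\alpha$. Every $a\in A$ is fixed because $\mathcal{G}\subseteq\mathrm{End}_A(V)^{\times}$ commutes with $A$, so $g^{-1}ag=a$; and $F\subseteq F[x_{ij}^{(\alpha)}]$ is fixed. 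Traces are conjugation invariant, so the traces of elements of $R(A,n,k)$ are fixed as well, and hence so is the whole algebra $\bar R(A,n,k)$, which they generate.

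For the reverse inclusion I would take a $\mathcal{G}$-fixed $M$ and reduce to the multilinear case: the fixed ring is a multigraded subring, and polarization and restitution, which commute with the $\mathcal{G}$-action because $\mathcal{G}$ acts only on the $X_\alpha$, keep one inside the fixed ring, so it is enough to treat $M$ multilinear in $X_1,\dots,X_k$. The corresponding map $M_n(F)^k\to M_n(F)$ is then multilinear and $\mathcal{G}$-equivariant, hence by the preceding theorem equals the evaluation of some multilinear mixed $A$-trace polynomial $f(x_1,\dots,x_k)$ on generic matrices. But a mixed $A$-trace polynomial is a sum of terms $g(X)\,u(X)$ with $g$ a pure $A$-trace polynomial and $u$ an $A$-polynomial, so its value on the generic matrices lies in the trace algebra generated by the generic matrices together with $A$, namely $\bar R(A,n,k)$; by injectivity of evaluation this value is $M$, so $M\in\bar R(A,n,k)$, and equality follows.

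The main obstacle, modest as it is, lies in the first paragraph: one must verify carefully that $\mathcal{G}$-fixedness of an element of $M_n(F[x_{ij}^{(\alpha)}])$ is equivalent to $\mathcal{G}$-equivariance of the associated polynomial map, and that polarization does not lead out of the fixed ring. These are exactly the matrix-coefficient versions of the steps behind Corollary~\ref{cor:2.4} and of Procesi's treatment of the classical trace ring, so once the preceding theorem, which is the mixed-trace analogue of Theorem~\ref{thm:atr}, is granted, the remainder is routine.
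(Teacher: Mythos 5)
Your proposal is correct and follows essentially the same route the paper gestures at: the paper states that the mixed-trace results of the section, including this theorem, "are easily derived" from their pure-trace counterparts, and the concrete mechanism is precisely the translation you carry out between $\mathcal{G}$-fixed elements of $M_n(F[x_{ij}^{(\alpha)}])$ and $\mathcal{G}$-equivariant polynomial maps $M_n(F)^k\to M_n(F)$, together with the preceding theorem identifying the latter with mixed $A$-trace polynomials. Your easy inclusion (generic matrices, $A$, and traces are fixed) and hard inclusion (polarization plus the preceding theorem plus faithfulness of evaluation) are exactly the details the paper omits as standard.
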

Define the mixed trace Cayley-Hamilton polynomials via
$$ tr(MC_n(x_1,\ldots,x_n)x_{n+1})=C_{n+1}(x_1,\ldots,x_{n+1})$$
\begin{thm} All mixed $A$-trace identities for $M_n(F)$ are 
consequences of the identities $$MC_{n_i}(e_ix_ie_i,\ldots,
i_ix_{n_i}e_i).$$
\end{thm}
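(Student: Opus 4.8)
The plan is to derive this from its pure trace counterpart, Theorem~\ref{thm:rp}, in exactly the way the mixed trace results of this section are obtained from the pure ones, i.e.\ via the mixed/pure dictionary stated just above. By multilinearization it suffices to treat a multilinear mixed $A$-trace identity $f(x_1,\ldots,x_k)$ of $M_n(F)$. By non-degeneracy of the trace form on $M_n(F)$, $f$ is a mixed trace identity precisely when the pure $A$-trace polynomial
$$g(x_1,\ldots,x_{k+1}) = tr\bigl(f(x_1,\ldots,x_k)\,x_{k+1}\bigr)$$
is a pure $A$-trace identity of $M_n(F)$; here $g$ is multilinear and $x_{k+1}$ occurs linearly, in the single outermost untraced slot. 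By Theorem~\ref{thm:rp}, $g$ is a consequence of the pure Cayley--Hamilton identities~\eqref{eq:ch}. Since $e_i^2=e_i$, cyclicity of the trace gives $C_{n_i+1}(x_1e_i,\ldots,x_{n_i+1}e_i)=C_{n_i+1}(e_ix_1e_i,\ldots,e_ix_{n_i+1}e_i)$, and the defining relation $tr(MC_{n_i}(y_1,\ldots,y_{n_i})\,y_{n_i+1})=C_{n_i+1}(y_1,\ldots,y_{n_i+1})$ displays this generator as the trace of the proposed mixed generator $MC_{n_i}(e_ix_1e_i,\ldots,e_ix_{n_i}e_i)$ against $e_ix_{n_i+1}e_i$.

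Next I would push a chosen expression of $g$ as a consequence of the $C_{n_i+1}$'s through the correspondence $h\mapsto tr(hx_{k+1})$. Such an expression is a sum of terms, each obtained by substituting mixed $A$-trace polynomials into the variables of some $C_{n_i+1}(e_ix_1e_i,\ldots)$ and then applying further traces and left/right multiplications by $A$-trace polynomials. Because $g$ is homogeneous of degree one in $x_{k+1}$, in every such term $x_{k+1}$ occurs exactly once; using $tr(uv)=tr(vu)$ and the fact that $ux_{k+1}$ is recovered from $tr(ux_{k+1})$ by non-degeneracy, one normalizes each term so that its unique $x_{k+1}$ sits in one outermost position, and then strips it off. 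A term in which $x_{k+1}$ did not enter the substituted $C_{n_i+1}$ yields the corresponding mixed-trace operation applied to $MC_{n_i}(e_ix_1e_i,\ldots,e_ix_{n_i}e_i)$; a term in which $x_{k+1}$ did enter it is handled by the linearity of $C_{n_i+1}$ in each argument, specializing to the $C$-to-$MC$ relation above. Collecting the pieces expresses $f$ itself as a consequence of the stated identities, which is what we want.

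The step that will absorb most of the work is the normalization in the second paragraph: verifying that, because $x_{k+1}$ enters $g$ linearly and only through the outermost factor, any representation of $g$ as a consequence of the $C_{n_i+1}$'s can be rearranged so that $x_{k+1}$ is confined to a single outermost occurrence that can be stripped. The only point beyond the classical Razmyslov--Procesi argument is that we work in the amalgamated product $F\langle X,A\rangle$ rather than over a commutative ground ring, so one must check that the coefficients $a\in A$ travelling with $x_{k+1}$ are merely carried along and create no obstruction; this is routine, and once the normalization is in place the passage from the pure-trace consequence to the mixed-trace one is purely formal. Since these are exactly the standard techniques, I would present the argument largely by appeal to Theorem~\ref{thm:rp} and the mixed/pure dictionary, as the authors indicate.
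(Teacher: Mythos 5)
The paper gives no proof here: just before this theorem it states the pure/mixed dictionary ($f$ is a mixed identity iff $tr(fx_{k+1})$ is a pure identity, and every multilinear pure identity arises this way) and then says ``Since the techniques are standard we merely state the results.'' Your proposal follows precisely the route the paper intends, reducing to Theorem~\ref{thm:rp} via that dictionary and then transporting the consequence relation; the algebraic observations you make along the way (cyclicity and idempotence giving $C_{n_i+1}(x_1e_i,\ldots)=C_{n_i+1}(e_ix_1e_i,\ldots)$, the defining relation between $C_{n_i+1}$ and $MC_{n_i}$) are correct, and you even implicitly fix the paper's small slip in calling the recovered $f$ a \emph{pure} trace identity. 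Your instinct to flag the normalization step as the place where the real work sits is right, but since the paper itself treats this as standard Razmyslov--Procesi bookkeeping and offers no more detail, your level of rigor matches the paper's and the approach is the same.
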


 \section{Poincar\'e Series Again}
We consider the case of $G=GL_a(F)=GL(W)$ acting on $V=
W^b$ which we write as $W_1\oplus\cdots\oplus W_b$.  Let $n=ab$. 
  The commuting ring $End_G(V)$ is isomorphic to $M_b(F)$
 with the matrix unit $e_{ij}$ an isomorphism $W_i\rightarrow
W_j$.  For example, if $a=b=2$ then
$$e_{11}\leftrightarrow\begin{pmatrix}1&0&0&0\\0&1&0&0\\
0&0&0&0\\0&0&0&0\end{pmatrix},\ e_{12}\leftrightarrow\begin{pmatrix}0&0&1&0\\0&0&0&1\\
0&0&0&0\\0&0&0&0\end{pmatrix},\mbox{ etc.}$$
Write the matrix unit in $M_{ab}(F)$ corresponding to
$e_{ij}$ in $M_a(F)$ as $E_{ij}$.
As in the classical case, the universal algebra for algebras
satisfying the $G$-identities of $M_{ab}(F)$ is the algebra
generated by all $E_i X_\alpha E_j$ where the $X_\alpha$
are generic $n\times n$ matrices.  Let $U(a,b,k)$ be this universal algebra on~$k$ generators; let $U(a,b,k,ptr)$
be the algebra generated by the traces; and let $U(a,b,k,tr)$ be
the algebra generated by both. Theorems~\ref{thm:atr} and~\ref{thm:5.1} imply
the following.
\begin{thm} $U(a,b,k,ptr)$ is the fixed ring of $G$ acting
on the polynomial ring $F[x_{ij}^{(\alpha)}]$ and 
$U(a,b,k,tr)$ is the fixed ring of~$G$ acting on the
matrix ring over the polynomial algebra $M_n(F[x_{ij}^{(\alpha)}])$.\end{thm}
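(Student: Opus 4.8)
The plan is to read this theorem off from the general $A$-invariant theory of the previous section, applied to the case where the commuting algebra is simple. The first step is to put the present data into the form used there. Write $W=F^{a}$, so that $G=GL(W)$ acts on $V=W^{\oplus b}\cong W\otimes F^{b}$, trivially on the second factor, and $\dim V=ab=n$. Then $A:=End_{G}(V)=End_{G}(W)\otimes End(F^{b})=F\otimes M_{b}(F)\cong M_{b}(F)$ is a \emph{single} simple algebra; and, regarding $V=W\otimes F^{b}$ as an $A$-module through the second factor, $End_{A}(V)=End(W)\otimes End_{A}(F^{b})=M_{a}(F)\otimes F$, whose group of units is exactly $\mathcal{G}=GL_{a}(F)=G$. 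So this is precisely the situation treated earlier with $t=1$, $A_{1}=A=M_{b}(F)$ and $\mathcal{G}=G$; in particular the matrix units $E_{ij}$ of $A$ inside $M_{n}(F)$ are those exhibited above, and Regev's double centralizer theorem applies to the pair $(\mathcal{G},A\sim S_{k})$ acting on $V^{\otimes k}$.

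Next I would match up the universal algebras. By construction $U(a,b,k)$ is the universal algebra for the $A$-identities of $M_{n}(F)$, that is, the image $R(A,n,k)$ of $F\langle X,A\rangle$: since $1=\sum_{i}E_{ii}\in A$ one has $X_{\alpha}=\sum_{i,j}E_{ii}X_{\alpha}E_{jj}$, so the algebra generated by $A$ together with the elements $E_{i}X_{\alpha}E_{j}$ (with $E_{i}:=E_{ii}$) is exactly $R(A,n,k)$. Hence $U(a,b,k,ptr)=\bar{C}(A,n,k)$ and $U(a,b,k,tr)=\bar{R}(A,n,k)$ by the definitions of those rings.

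With these identifications the theorem is immediate. The generic-matrix restatement of Theorem~\ref{thm:atr} asserts $F[x_{ij}^{(\alpha)}]^{\mathcal{G}}=\bar{C}(A,n,k)$, and since $\mathcal{G}=G$ and $\bar{C}(A,n,k)=U(a,b,k,ptr)$ this is the first claim; likewise Theorem~\ref{thm:5.11} gives $M_{n}(F[x_{ij}^{(\alpha)}])^{\mathcal{G}}=\bar{R}(A,n,k)=U(a,b,k,tr)$, which is the second. There is thus no substantive obstacle: the one point that genuinely needs checking is the opening paragraph, namely that this particular embedding $G\hookrightarrow M_{ab}(F)$ really does arise as the group of units of $End_{A}(V)$ for the semisimple algebra $A=End_{G}(V)$, so that the hypotheses of Theorems~\ref{thm:atr} and~\ref{thm:5.11} are in force. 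That verification is the short Schur-lemma computation indicated above, after which nothing further is required.
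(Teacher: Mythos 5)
Your proof is correct and takes the same route as the paper, which simply cites Theorems~\ref{thm:atr} and~\ref{thm:5.11} as immediately implying the result. You have usefully filled in the bookkeeping the paper leaves implicit — the Schur-lemma verification that $A=End_G(V)\cong M_b(F)$ with $\mathcal{G}=GL_a(F)=G$, and the identifications $U(a,b,k,ptr)=\bar{C}(A,n,k)$ and $U(a,b,k,tr)=\bar{R}(A,n,k)$ — but the underlying argument is identical.
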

Using Weyl's integration formula we can express the 
Poincar\'e series for the pure and mixed trace rings as
complex integrals.  Let $D\in GL_a(F)$ be the generic diagonal
matrix $diag(z_1,\ldots,z_a)$.  Then $D$ acts on $W$ as
$$D^{\oplus b}=diag(\underbrace{z_1,\ldots,z_1}_b,\ldots,
\underbrace{z_a,\ldots,z_a}_b).$$   Given $1\le i,j\le n$ we
let $\gamma(i,j)$ be the unique $(s,t)$ such that
$e_{ij}\in E_s M_n(F) E_t$.  We also let $s=\gamma_1(i,j)$
and $t=\gamma_2(i,j)$.  Hence, under the conjugation action
$e_{ij}$ would be an eigenvector for $D$ with eigenvalue
$z_s z_t^{-1}$.  If we grade $F[x_{ij}^{(\alpha)}]$ by $\mathbb{N}^{ka^2}$ as in section~3, then $U(a,b,k,ptr)$
has Poincar\'e series
\begin{equation}(2\pi i)^{-a}a!^{-1}\oint_T \frac{\prod_{i\ne j}(1-\frac{z_i}{z_j})}{\prod_{ij\alpha}(1-\frac{\gamma_1(i,j)}{\gamma_2(i,j)}
t(i,j,\alpha))^{b}}\, \frac{dz_1}{z_1}\wedge\cdots\wedge\frac{dz_a}z_a
\label{eq:3a}\end{equation}
Let $P(a,b,k)$ denote this integral.  In the special case of 
$b=1$ we are reduced to the classical case of $GL_a(F)$ acting on
$a\times a$ matrices and the Poincar\'e series is given by the integral
in theorem~\ref{thm:pptr}.  Comparing the integrands
of that theorem to~\eqref{eq:3a} we have the following.
\begin{thm} Let $P(a,1,bk)$ be a function of the variables
$t(i,j,(m,\alpha))$, where $1\le i,j\le a$, $1\le m\le b$ and
$1\le\alpha\le k$; and let $P(a,b,k)$ be a function of the
variables $t(i,j,\alpha)$, where $1\le i,j\le a$ and $1\le
\alpha\le k$.  Then $P(a,b,k)$ can be computed from
$P(a,1,bk)$ by taking the limit of $t(i,j,(m,\alpha))\rightarrow
t(i,j,\alpha)$ for each variable.\label{thm:6.2}\end{thm}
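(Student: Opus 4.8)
The plan is to notice that both $P(a,b,k)$ and $P(a,1,bk)$ are torus integrals \emph{against one and the same measure} --- in both the relevant group is the single factor $GL_a(F)$, with Weyl measure $d\nu(z_1,\ldots,z_a)$ --- and that they differ only in their integrands, which are the characters of the respective polynomial rings; so the whole statement comes down to matching those two characters after the prescribed substitution and then passing the substitution through the integral.

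First I would write the two integrands. For $P(a,1,bk)$, i.e.\ the classical $GL_a(F)$--series of Theorem~\ref{thm:pptr} with $n=a$ and the $bk$ generic $a\times a$ matrices labelled by the pairs $(m,\alpha)$, $1\le m\le b$, $1\le\alpha\le k$, the character is
$$\prod_{1\le i,j\le a}\ \prod_{1\le m\le b,\ 1\le\alpha\le k}\Bigl(1-\tfrac{z_i}{z_j}\,t(i,j,(m,\alpha))\Bigr)^{-1},$$
while for $P(a,b,k)$ the character appearing in~\eqref{eq:3a} is, after absorbing the factor $K$ of~\eqref{eq:K} into $d\nu$,
$$\prod_{1\le i,j\le a}\ \prod_{1\le\alpha\le k}\Bigl(1-\tfrac{z_i}{z_j}\,t(i,j,\alpha)\Bigr)^{-b}.$$
The substitution $t(i,j,(m,\alpha))\mapsto t(i,j,\alpha)$ carries each factor $\prod_{m=1}^{b}\bigl(1-\tfrac{z_i}{z_j}t(i,j,(m,\alpha))\bigr)$ of the first product to $\bigl(1-\tfrac{z_i}{z_j}t(i,j,\alpha)\bigr)^{b}$, so it sends the first character exactly to the second. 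Conceptually this is just that, under conjugation, $End(W^{b})$ is assembled as a $GL_a(F)$--module from copies of $End(W)$, and the coarse $\mathbb{N}^{ka^2}$--grading of $P(a,b,k)$ records only the $GL_a(F)$--weight of a coordinate and the label of the generic matrix it comes from; identifying all the grading variables that the substitution identifies is precisely forgetting the remaining block data.

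It then remains to see that integration against $d\nu$ commutes with the substitution. Expand either character as a power series in its grading variables with coefficients that are Laurent polynomials in $z$; by Weyl's character formula (Theorem~\ref{thm:Weyl1}) the associated Poincar\'e series is obtained by integrating this expansion term by term, each term yielding a scalar. The substitution is a ring homomorphism that only merges monomials, and $\oint(\cdot)\,d\nu$ is $F$--linear, so it commutes with this merging; hence the integral of the substituted character equals the substitution applied to $P(a,1,bk)$. The substitution is also innocuous at the level of the rational function $P(a,1,bk)$: by Theorem~\ref{thm:v1} its denominator is a product of factors $1-t^{C}$, and since variables are sent to variables, no $t^{C}$ can become $1$, so no spurious pole is created. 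Combining the two steps yields $P(a,b,k)=P(a,1,bk)\big|_{\,t(i,j,(m,\alpha))\to t(i,j,\alpha)}$.

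The one place that repays care is the first step --- getting the two characters, and in particular the exponents, to match --- since this rests on the bookkeeping of how the $\mathbb{N}^{ka^2}$--grading on the invariants of $GL_a(F)$ acting on $n\times n$ matrices compares with the finer grading on the invariants of $GL_a(F)$ acting on $a\times a$ matrices, and on the multiplicity with which each weight of $GL_a(F)$ occurs. The second step is routine: on $|z_i|=1$ the character converges uniformly for the grading variables in a small polydisc, so one may specialize them (equivalently, take the limit) underneath the integral.
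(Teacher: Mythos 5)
The paper's own proof of this theorem is a single sentence: ``Comparing the integrands of that theorem to~\eqref{eq:3a} we have the following.''  Your proposal is the same argument --- write both sides as Weyl integrals over the torus of $GL_a(F)$ against the same measure $d\nu(z_1,\dots,z_a)$, observe that the substitution carries one integrand to the other, and pass the substitution through the integral.  The extra paragraph you give about linearity and uniform convergence of the series on a small polydisc is a reasonable way to justify the interchange and is more than the paper says, but it is not a different route.

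You flag the character match as ``the one place that repays care,'' and that is exactly right --- but then you do not actually carry out that bookkeeping: you take the exponent $b$ directly from~\eqref{eq:3a}.  If one does the count, the ambient ring $F[x_{ij}^{(\alpha)}]$ has $n^2k=a^2b^2k$ indeterminates; under the diagonal torus $D=\operatorname{diag}(z_1,\dots,z_a)^{\oplus b}$ there are, for each pair $(p,q)\in\{1,\dots,a\}^2$ and each $\alpha$, exactly $b^2$ coordinates $x_{ij}^{(\alpha)}$ of weight $z_p/z_q$ (one for each of the $b$ choices of block containing $i$ and each of the $b$ choices of block containing $j$), not $b$.  So the $\mathbb{N}^{ka^2}$-graded character should be
$$\prod_{1\le p,q\le a}\prod_{1\le\alpha\le k}\Bigl(1-\tfrac{z_p}{z_q}\,t(p,q,\alpha)\Bigr)^{-b^2},$$
which matches a specialization of $P(a,1,b^2k)$, not of $P(a,1,bk)$.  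A quick check with $a=1$, $b=2$, $k=1$: the $GL_1$-fixed ring of $F[x_{11},x_{12},x_{21},x_{22}]$ is the whole ring, with Poincar\'e series $(1-t)^{-4}$, whereas $P(1,1,2)\big|_{t_1,t_2\to t}=(1-t)^{-2}$.  So the one step your write-up singles out as needing care is precisely the step that is missing, and when it is done it appears to disagree with the exponent in~\eqref{eq:3a} (and hence with the statement as given, which should presumably read $P(a,1,b^2k)$).  Your proposal inherits the same issue as the paper; the rest of the argument (substitution commutes with $\oint\,\cdot\,d\nu$) is sound.
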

We can also restate Theorem~\ref{thm:6.2} in terms of cocharacters.  We define an operation on Schur functions via
$$S_\lambda^{\boxtimes b}(t_1,\ldots,t_a)=
S_\lambda(\underbrace{t_1,\ldots,t_1}_b,\ldots,
\underbrace{t_a,\ldots,t_a}_b).$$
This is equivalent to
$$S_\lambda^{\boxtimes b}(t_1,\ldots,t_a)=\sum_{\mu_1\subseteq\mu_2\subseteq\cdots\subseteq\mu_{b-1}
\subseteq\lm}S_{\mu_1}(t_a,\ldots,t_a)S_{\mu_2/\mu_1}(t_a,\ldots,t_a)\cdots S_{\lm/\mu_{b-1}}(t_a,\ldots,t_a).$$
We may define a corresponding operation on $S_n$-characters via
$$(x^\lm)^{\boxtimes b}=\sum_{\mu_1\subseteq\mu_2\subseteq\cdots\subseteq\mu_{b-1}
\subseteq\lm}\chi^{\mu_1}\otimes\chi^{\mu_2/\mu_1}\otimes\cdots \otimes\chi^{\lm/\mu_{b-1}},$$
where the tensor product is the outer tensor product.
\begin{thm} Let the ring of generic $a\times a$ matrices have Poincar\'e series
$\sum m_\lambda S_\lambda(t_1,\ldots,t_k)$.  Then $U(a,b,k)$ has
Poincar\'e series $\sum m_\lambda S_\lambda^{\boxtimes b}(t_1,\ldots,t_k)$.
And if $M_a(F)$ has $m$-th cocharacter $\sum m_\lm \chi^\lm$,
then $M_{ab}(F)$ will have $m$-th $G$-cocharacter $\sum m_\lm (\chi^\lm)^{\boxtimes b}$
\end{thm}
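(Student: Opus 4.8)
The plan is to adapt the Section~6 development — Theorem~\ref{thm:6.2} and the integral~\eqref{eq:3a} — from pure-trace to ordinary identities, and then pass to cocharacters by the characteristic map. By the correspondence recalled in the cocharacter discussion of Section~3 (the coefficient of $S_\mu$ in the Poincar\'e series of a relatively free algebra equals the multiplicity of $\chi^\mu$ in its cocharacter), the hypothesis $\sum_\lambda m_\lambda S_\lambda$ is a transcription of $\sum_\lambda m_\lambda\chi^\lambda$, and the two conclusions of the theorem are equivalent once we know that $S_\lambda^{\boxtimes b}$ corresponds to $(\chi^\lambda)^{\boxtimes b}$ under $\mathrm{ch}$. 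That last point is built into the definitions: $S_\lambda^{\boxtimes b}$ is obtained by iterating $b-1$ times the identity $S_\lambda(\underline u,\underline v)=\sum_{\mu\subseteq\lambda}S_\mu(\underline u)\,S_{\lambda/\mu}(\underline v)$, while $(\chi^\lambda)^{\boxtimes b}$ is obtained by iterating $b-1$ times the branching rule $\chi^\lambda|_{S_j\times S_{m-j}}=\sum_\mu \chi^\mu\otimes\chi^{\lambda/\mu}$; since $\mathrm{ch}$ is a ring isomorphism carrying outer products to products and $\chi^{\lambda/\mu}$ to $S_{\lambda/\mu}$, the two iterations agree term by term. So I would prove only the Poincar\'e-series statement.

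\emph{The Poincar\'e-series statement.} I would establish the ordinary-identity analogue of Theorem~\ref{thm:6.2}: the Poincar\'e series of $U(a,b,k)$, in the variables $t_1,\dots,t_k$, is obtained from the Poincar\'e series of $U(a,1,bk)$ — the ring of generic $a\times a$ matrices on $bk$ generators $X_{(m,\alpha)}$, $1\le m\le b$, $1\le\alpha\le k$ — by the substitution $t_{(m,\alpha)}\mapsto t_\alpha$. Granting this, the theorem follows: the Poincar\'e series of $U(a,1,bk)$ is $\sum_\lambda m_\lambda S_\lambda(\dots,t_{(m,\alpha)},\dots)$ by hypothesis, and performing the substitution groups, by the symmetry of $S_\lambda$, the $bk$ arguments into $k$ blocks of $b$ equal entries, i.e.\ replaces $S_\lambda$ by $S_\lambda^{\boxtimes b}$. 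To prove the substitution statement I would imitate the proof of Theorem~\ref{thm:6.2}. Reducing to multihomogeneous (indeed multilinear, by multilinearization) components, I would describe the degree-$m$ component of $U(a,b,k)$ as an image inside $\mathrm{End}(V^{\otimes m})$ — the ordinary-identity counterpart of Corollary~\ref{cor:atr} and Theorem~\ref{thm:ker} — and then use that the $b$-fold diagonal action $D\mapsto D^{\oplus b}$ of $G=GL_a(F)$ on $V=W^{\oplus b}$ splits $V^{\otimes m}=\bigoplus_{c\in[b]^m}W_{c_1}\ootimes W_{c_m}$; summing the contributions of the colorings $c$ that have a prescribed total degree in each $X_\alpha$ reproduces exactly the variable collapse. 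Equivalently, one can argue purely through Weyl's integration formula: since $D\mapsto D^{\oplus b}$ multiplies the multiplicity of each eigenvalue $z_i/z_j$ of the adjoint action by $b$, the relevant integrand over $GL_a(F)$ is that of the $b=1$, $bk$-generator case with $t_{(m,\alpha)}\mapsto t_\alpha$ — precisely the comparison of~\eqref{eq:3a} with Theorem~\ref{thm:pptr}, now in its non-trace form.

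\emph{Main obstacle.} The step to be checked with care is the identification of the multihomogeneous components of $U(a,b,k)$ in the previous paragraph: porting the chain ``Corollary~\ref{cor:atr} $\Rightarrow$ Theorem~\ref{thm:ker} $\Rightarrow$ component description'' from pure-trace $A$-identities to ordinary $A$-identities, and keeping the bookkeeping of the bordering idempotents honest, so that the count lands on $S_\lambda^{\boxtimes b}$ exactly — not on $S_\lambda^{\boxtimes b}$ times a stray factor coming from the end idempotents, nor on $S_\lambda^{\boxtimes b^2}$ — with the grading convention on $U(a,b,k)$ fixed to match the exponent $b$ in~\eqref{eq:3a}. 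This is routine given Sections~5 and~6 and uses no new idea; and for the pure-trace forms of both displayed equalities, no work at all is needed beyond the reduction and Theorem~\ref{thm:6.2}.
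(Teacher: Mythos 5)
The paper offers no proof of this statement --- it is presented as an immediate restatement of Theorem~\ref{thm:6.2}, so there is no ``paper proof'' to compare against line by line. Your plan (use the variable-collapse relation of Theorem~\ref{thm:6.2}, note that collapsing $bk$ variables into $k$ groups of $b$ turns $S_\lambda$ into $S_\lambda^{\boxtimes b}$, and then translate to cocharacters via $\mathrm{ch}$) does reconstruct the argument the paper is gesturing at, and your point that the two displayed claims are equivalent under the characteristic map is sound.

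However, there is a genuine problem that you should have caught, and it is precisely at the step you yourself flagged as ``to be checked with care.'' You correctly identify the heart of the matter as the decomposition of $V^{\otimes m}$ under the diagonal embedding $D\mapsto D^{\oplus b}$, but the bookkeeping does not come out to a factor of $b$. As a $GL_a(F)$-module, $\mathrm{End}(V)=V\otimes V^*\cong (W\otimes W^*)^{\oplus b^2}=M_a(F)^{\oplus b^2}$, \emph{not} $M_a(F)^{\oplus b}$: a generic $ab\times ab$ matrix $X_\alpha$ contributes $b^2$ independent generic $a\times a$ blocks $E_iX_\alpha E_j$, one for each pair of idempotents, not $b$. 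So the natural specialization collapses $b^2$ variables per $\alpha$, and the Weyl integrand should carry exponent $b^2$, not $b$. Your appeal to Theorem~\ref{thm:6.2} (which has $bk$) and to~\eqref{eq:3a} (which has exponent $b$) propagates this same discrepancy; since you are reading Theorem~\ref{thm:6.2} blind, you inherit it rather than catch it. A direct check of $a=b=2$, $k=1$ exposes the issue: the degree-one piece of $U(2,2,1)$ is spanned by the four blocks $E_iXE_j$ ($i,j\in\{1,2\}$), hence has dimension $4$, whereas $\sum m_\lambda S_\lambda^{\boxtimes 2}(t)$ gives coefficient $m_{(1)}S_{(1)}(t,t)=2t$ in degree one. (In fact even after replacing $\boxtimes b$ by $\boxtimes b^2$ the chaining constraint on the inner idempotents --- a word $E_{i_1}X_{\sigma(1)}E_{i_2}\cdots$ must have matching adjacent subscripts --- cuts the span down in a way that a plain variable identification does not reproduce; the degree-two dimension of $U(2,2,1)$ is $8$, not $16$.) So before claiming the Poincar\'e-series formula, you need to (i) sort out $b$ versus $b^2$ in the variable collapse, and (ii) account for the walk-type constraint that prevents $U(a,b,k)$ from being a straightforward specialization of a classical generic matrix ring. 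A small worked example would have surfaced both difficulties.
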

\section{Procesi's Embedding Theorem}
In~\cite{P} Procesi proved the following theorem:
\begin{thm*}[Procesi] Let $R$ be an algebra with trace in
characteristic zero, satisfying all of the trace identities of $M_n(F)$.  Then $R$ has a trace preserving embedding into
$M_n(C)$  for some commutative algebra~$C$.\end{thm*}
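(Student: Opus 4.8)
The plan is to mimic Procesi's original reconstruction~\cite{P}: present $R$ as a quotient of a generic trace ring, embed that ring into $n\times n$ matrices over a polynomial ring by invariant theory, and then push the defining ideal of $R$ down to the commutative coordinate ring, exploiting that $GL_n(F)$ is linearly reductive in characteristic zero. First I would reduce to the case that $R$ is finitely generated as an algebra with trace, say by $r_1,\dots,r_k$, the general case following by a direct limit applied to the functorial construction below. Because $R$ satisfies every trace identity of $M_n(F)$, the canonical map from the free algebra with trace on $k$ symbols factors through the generic trace ring, giving a surjection of trace algebras $p\colon T\twoheadrightarrow R$, where $T=U(M_n(F),tr)$ on $k$ generic matrices $\xi_1,\dots,\xi_k$ and $p(\xi_\alpha)=r_\alpha$; here one may also invoke the Razmyslov--Procesi theorem, the one-idempotent case of Theorem~\ref{thm:d3}, although it is not strictly needed. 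Let $I=\ker p$, a two-sided ideal of $T$ closed under the trace.

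Next I would invoke the invariant theory. Put $C_0=F[x_{ij}^{(\alpha)}:1\le i,j\le n,\ 1\le\alpha\le k]$, let $X_\alpha=(x_{ij}^{(\alpha)})$ be the generic matrices, and let $GL_n(F)$ act on $C_0$ by $g\cdot x_{ij}^{(\alpha)}=(gX_\alpha g^{-1})_{ij}$ and on $M_n(F)$ by conjugation, hence diagonally on $M_n(C_0)=M_n(F)\otimes_F C_0$. By Theorem~\ref{thm:d2} and Corollary~\ref{cor:2.4} taken with $t=1$ (one idempotent, the identity), the assignment $\xi_\alpha\mapsto X_\alpha$ identifies $T$ with the invariant ring $M_n(C_0)^{GL_n}$ and $U(M_n(F),ptr)$ with $C_0^{GL_n}$; from now on I regard $T\subseteq M_n(C_0)$. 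Let $\mathfrak a\trianglelefteq C_0$ be the ideal generated by all matrix entries of all elements of $I$; equivalently, $\mathfrak a$ is characterized by $M_n(\mathfrak a)=M_n(C_0)\,I\,M_n(C_0)$, the two-sided ideal of $M_n(C_0)$ generated by $I$. Put $C=C_0/\mathfrak a$, so $M_n(C)=M_n(C_0)/M_n(\mathfrak a)$. Since every element of $I$ has all of its entries in $\mathfrak a$, the composite $T\hookrightarrow M_n(C_0)\twoheadrightarrow M_n(C)$ annihilates $I$ and therefore induces a homomorphism of trace algebras $\bar\phi\colon R=T/I\to M_n(C)$, which is trace-preserving because the trace of $M_n(C)$ is induced from that of $M_n(C_0)$ and $tr(1)=n$ on both sides.

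It remains to prove that $\bar\phi$ is injective, i.e.\ that $T\cap M_n(\mathfrak a)=I$; the inclusion $I\subseteq T\cap M_n(\mathfrak a)$ is immediate, and I expect the reverse inclusion $T\cap M_n(\mathfrak a)\subseteq I$ to be the main obstacle. Since $I\subseteq T$ consists of $GL_n$-fixed elements, $M_n(\mathfrak a)=M_n(C_0)\,I\,M_n(C_0)$ is a $GL_n$-submodule of $M_n(C_0)$; as $GL_n(F)$ is linearly reductive in characteristic zero there is a Reynolds operator $\rho\colon M_n(C_0)\to M_n(C_0)^{GL_n}=T$, which is a projection and a homomorphism of $T$-bimodules, so that $T\cap M_n(\mathfrak a)=M_n(\mathfrak a)^{GL_n}=\rho\bigl(M_n(C_0)\,I\,M_n(C_0)\bigr)$. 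Hence it is enough to show $\rho(u\,y\,v)\in I$ for all $u,v\in M_n(C_0)$ and $y\in I$. Choosing a finite-dimensional $GL_n$-submodule of $M_n(C_0)$ containing $u$ and $v$, and using that multiplication by the fixed element $y$ is $GL_n$-equivariant, one sees that $\rho(u\,y\,v)$ is an $F$-linear combination of elements $\sum_m a_m\,y\,b_m$ in which $\sum_m a_m\otimes b_m$ runs over a spanning set of the invariants $\bigl(M_n(C_0)\otimes M_n(C_0)\bigr)^{GL_n}$. The first fundamental theorem for $GL_n$ describes these invariant tensors explicitly, and the one remaining point is to verify that each associated ``insertion'' map $z\mapsto\sum_m a_m\,z\,b_m$ carries the two-sided ideal $I$ of $T$ into itself; this closure property is the technical heart of the proof.

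Finally, I would remark that this three-step scheme --- quotient of a generic trace ring, invariant-theoretic embedding, reductive descent --- carries over to give the $\mathcal G$-equivariant embedding announced in the introduction (the $M_6(F)/GL_2(F)$ example): one replaces the classical inputs by Theorem~\ref{thm:atr}, Theorem~\ref{thm:5.11} and Theorem~\ref{thm:rp}, and the $GL_n$-invariants in $M_n(C_0)$ and in $C_0$ by the corresponding $\mathcal G$-invariants, the reductivity of $\mathcal G=GL_a(F)$ again supplying the Reynolds operator.
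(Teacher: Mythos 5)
The paper does not prove Procesi's theorem; it states it, cites~\cite{P}, and records the three structural ingredients (generic trace ring as quotient of the free trace algebra, identification of that ring with $M_n(F[x_{ij}^{(\alpha)}])^{GL_n}$, linear reductivity of $GL_n$). Your proposal reconstructs Procesi's argument from exactly those ingredients and the overall architecture is right, including the key reductions: reduce to finitely generated $R$, present $R=T/I$ with $T=M_n(C_0)^{GL_n}$, set $\mathfrak a=$ the ideal of entries of $I$, and reduce injectivity of $R\to M_n(C_0/\mathfrak a)$ to the inclusion $\rho\bigl(M_n(C_0)\,I\,M_n(C_0)\bigr)\subseteq I$, which in turn reduces via equivariance of $\rho$ to showing that every insertion operator $z\mapsto\sum_m a_m z b_m$ attached to an invariant tensor $\sum_m a_m\otimes b_m\in\bigl(M_n(C_0)\otimes M_n(C_0)\bigr)^{GL_n}$ carries $I$ into $I$. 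But there you stop, flagging that last claim as ``the technical heart.'' That is precisely the non-formal step on which the whole descent hinges --- without it $T\cap M_n(\mathfrak a)=I$ is unproved --- so as it stands the proposal has a genuine gap rather than a complete proof.

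The gap can be closed, and it is worth recording how, because it is the only place the trace-compatibility of $p$ is actually used. Adjoin one new generic matrix $X_0$ and pass to $T'=M_n(C_0')^{GL_n}$ with $C_0'=F[x_{ij}^{(\alpha)}:0\le\alpha\le k]$. The element $\sum_m a_m X_0 b_m$ lies in $T'$ (it is invariant because $\sum a_m\otimes b_m$ is) and is linear in $X_0$; by the description of the mixed trace ring it therefore has the form $\sum_i c_i X_0 d_i+\sum_j tr(X_0 e_j)\,f_j$ with all $c_i,d_i,e_j,f_j\in T$. Substituting $X_0=y$ with $y\in I$: the terms $c_i y d_i$ lie in $I$ because $I$ is a two-sided ideal of $T$, and the terms $tr(ye_j)f_j$ lie in $I$ because $I$ is a \emph{trace} ideal --- since $p\colon T\to R$ preserves traces, $y\in I$ forces $tr(y)=tr(y)\cdot 1\in I$, hence $tr(ye_j)\cdot 1\in I$. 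This supplies the closure property you left open and completes the proof; the same device (adjoin a dummy generic element, expand the resulting linear invariant, use that the kernel is a two-sided trace ideal) is what carries over verbatim to the $A$-equivariant setting of Section~7.
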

As Procesi carefully points out, his proof follows from the
following:
\begin{itemize}
\item The free ring with trace $F\langle X,tr\rangle$ modulo
the trace identities of $M_n(F)$ is isomorphic to the generic
matrix algebra with trace.
\item This free ring is the fixed ring of $G=GL_n(F)$ acting on $M_n(F[x_{ij}^{(\alpha)}
])$.
\item The group $G=GL_n(F)$ is linearly reductive.
\end{itemize}
Using Corollary \ref{cor:2.4} and Theorem~\ref{thm:d4} This immediately gives the following:
\begin{thm} Let $G$ be a linearly reductive subgroup of $GL_n(F)$ and let
$A=End_G(F^n)\subseteq M_n(F)$.  If $R$ is an $A$-algebra with trace satisfying all of the
$A$-trace identities of $M_n(F)$, then $R$ has an embedding into $M_n(C)$ for some
commutative algebra~$C$ which preserves both trace and $A$-action.
\end{thm}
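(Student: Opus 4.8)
The plan is to run Procesi's proof essentially verbatim, substituting for each of his three ingredients its $A$-analogue from the body of this paper. Procesi's argument rests on: (i)~the free algebra with trace modulo the trace identities of $M_n(F)$ is the generic matrix algebra with trace; (ii)~that algebra is the fixed ring of $GL_n(F)$ acting on $M_n(F[x^{(\alpha)}_{ij}])$; and (iii)~$GL_n(F)$ is linearly reductive. In our situation (i)~becomes the statement that $\bar R(A,n,k)$ is the relatively free $A$-algebra with trace for the variety cut out by the $A$-trace identities of $M_n(F)$ — which holds by the generic nature of the construction, i.e.\ by the same Zariski-density argument as in the classical case, consistent with Theorem~\ref{thm:rp} and its mixed-trace companion; (ii)~becomes Theorem~\ref{thm:5.11}, the general counterpart of Corollary~\ref{cor:2.4}; and (iii)~is exactly the hypothesis that $\mathcal G$ is linearly reductive. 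Since the linear span of $G$ is all of $\operatorname{End}_A(F^n)$, the $G$-invariants and the $\mathcal G$-invariants of $M_n(F[x^{(\alpha)}_{ij}])$ coincide, so I may freely use the Reynolds operator of the reductive group $G$.

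First I would set $B=F[x^{(\alpha)}_{ij}]$ and $T=\bar R(A,n,k)=M_n(B)^{\mathcal G}\subseteq M_n(B)$. Given an $A$-algebra with trace $R$ satisfying all the $A$-trace identities of $M_n(F)$, relative freeness lets me present $R$ as a quotient $T/I$, with $k=|X|$ for a generating set $X$ of $R$ (possibly infinite, which changes nothing), where $I$ is a two-sided ideal of $T$ closed under trace and under the $A$-bimodule structure. Next I would form the two-sided ideal $M_n(B)\,I\,M_n(B)$ of $M_n(B)$; as every two-sided ideal of $M_n(B)$ has the form $M_n(\mathfrak a)$ for an ideal $\mathfrak a\trianglelefteq B$, this produces $\mathfrak a$, the ideal of $B$ generated by the matrix entries of the elements of $I$. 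Because the elements of $I$ lie in the fixed ring and $G$ acts on $M_n(B)$ by acting entrywise on $B$, the ideal $\mathfrak a$ is $G$-stable, so $C:=B/\mathfrak a$ is a commutative ring carrying a $G$-action. Since $A\subseteq M_n(F)\subseteq M_n(B)$ and reduction modulo $\mathfrak a$ is a ring homomorphism, $M_n(C)=M_n(B)/M_n(\mathfrak a)$ is an $A$-algebra with trace, and the composite $T\hookrightarrow M_n(B)\twoheadrightarrow M_n(C)$ is a homomorphism of $A$-algebras with trace whose kernel is $T\cap M_n(\mathfrak a)$. The theorem therefore reduces to the single equality $T\cap M_n(\mathfrak a)=I$: granting it, the composite factors as $T\twoheadrightarrow R\hookrightarrow M_n(C)$, and this embedding preserves trace and $A$-action because every map in sight does.

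The equality $T\cap M_n(\mathfrak a)=I$ is the one substantive step, and it is exactly where linear reductivity is indispensable — just as in Procesi. The inclusion $\supseteq$ is immediate. For $\subseteq$ one uses two facts: that $I$, being a trace ideal, is generated over $T$ by its intersection with the pure $A$-trace subring $\bar C(A,n,k)$ (the structure theory of trace ideals, exactly as for $T_n$ in Procesi's work), so that $M_n(\mathfrak a)=(I\cap \bar C(A,n,k))\,M_n(B)$ with $I\cap\bar C(A,n,k)$ central and $G$-invariant; and that the functor of $G$-invariants is exact, so the Reynolds operator $\rho\colon M_n(B)\to M_n(B)^{G}=T$ exists and is $T$-bilinear. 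Combining these, any $G$-invariant $y\in M_n(\mathfrak a)$, written $y=\sum c_sM_s$ with $c_s\in I\cap\bar C(A,n,k)$ and $M_s\in M_n(B)$, satisfies $y=\rho(y)=\sum c_s\rho(M_s)\in (I\cap\bar C(A,n,k))\,T=I$. I expect this verification to be the only real work; apart from it the argument is formal, and carrying the subalgebra $A$ along is harmless since $A\subseteq M_n(F)$ is carried isomorphically into $M_n(C)$ as soon as $C\neq 0$. The same reasoning also shows $C\neq 0$ whenever $R\neq 0$, and $R=0$ is trivial, so the proof is then complete.
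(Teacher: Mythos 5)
Your proof is correct and follows essentially the same route as the paper: Procesi's argument is carried over verbatim after replacing his three ingredients with their $A$-analogues (the relative freeness of $\bar R(A,n,k)$, Theorem~\ref{thm:5.11} for the fixed-ring description of $M_n(F[x_{ij}^{(\alpha)}])^{\mathcal G}$, and the linear reductivity of~$G$). The paper declares the conclusion ``immediate'' from these; you have simply unfolded Procesi's intermediate steps --- the presentation $R\cong T/I$, passage to the ideal $M_n(\mathfrak a)$, and the Reynolds-operator identity $T\cap M_n(\mathfrak a)=I$ --- all of which go through unchanged in the $A$-equivariant setting.
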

For example, take $G=GL_{n_1}\times\cdots\times GL_{n_t}\subseteq GL_n$,
where $n=\sum n_i$.  Then $A=F^t$ and an $A$-algebra is what we called an $E$-algebra.
\begin{cor} Let $R$ be an $E$-algebra satisfying the multilinear Cayley-Hamilton
idenitites $$CH_{n_i+1}(E_{i}X_1E_{i},\ldots,E_{i}X_{n_i+1}E_{i})$$
as in Theorem~\ref{thm:d3}.  Then $R$ has an $E$-trace preserving embedding
into $M_n(C)$ for some commutative algebra~$C$.\end{cor}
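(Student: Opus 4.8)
The plan is to obtain this as the special case of the preceding theorem in which $G=GL_{n_1}(F)\times\cdots\times GL_{n_t}(F)$, embedded block-diagonally in $GL_n(F)$ with $n=n_1+\cdots+n_t$. So the first step is to check the hypotheses of that theorem for this $G$. Linear reductivity is immediate: each $GL_{n_i}(F)$ is linearly reductive in characteristic zero (being reductive), and a finite product of linearly reductive groups is linearly reductive.

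The second step is to identify the commuting algebra and the attendant notion of algebra-with-action. Decompose $F^n=F^{n_1}\oplus\cdots\oplus F^{n_t}$, where $G$ acts on the $i$-th summand through the standard representation of $GL_{n_i}(F)$ and trivially through the remaining factors. These summands are pairwise non-isomorphic irreducible $G$-modules, so Schur's lemma gives $A=End_G(F^n)=F^t$, and the orthogonal idempotents $e_1,\dots,e_t$ that span $A$ are exactly the block projections $E_i\in M_n(F)$, with $E_i$ of rank $n_i$. Hence an $A$-algebra with trace is precisely an $E$-algebra with trace in the sense of Section~1 (with $tr(E_i)=n_i$, so that $tr(1)=n$), an $A$-trace polynomial is an $E$-trace polynomial, and a trace- and $A$-action-preserving embedding is the same thing as an $E$-trace preserving embedding.

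The third step is to match the identity hypotheses. By Theorem~\ref{thm:d3} the ideal of pure $E$-trace identities of $M_n(F)$ is generated by the $CH_{n_i+1}(E_iX_1E_i,\dots,E_iX_{n_i+1}E_i)$, $i=1,\dots,t$, and by Theorem~\ref{thm:d4}, together with the standard correspondence $f\mapsto tr(fx_{k+1})$ between mixed and pure trace identities, the same Cayley-Hamilton data forces all mixed $E$-trace identities of $M_n(F)$ as well. So an $E$-algebra with trace $R$ that satisfies the displayed Cayley-Hamilton identities satisfies every $A$-trace identity of $M_n(F)$, and the preceding theorem then provides a commutative algebra $C$ and an embedding $R\hookrightarrow M_n(C)$ preserving trace and $A$-action, which is the asserted $E$-trace preserving embedding. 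The one point that is not pure bookkeeping is precisely this last step — making sure the pure Cayley-Hamilton hypothesis is strong enough to yield all the $A$-trace identities, including the mixed ones that the embedding theorem consumes — and this is exactly what Theorems~\ref{thm:d3} and~\ref{thm:d4} are for; there is no further obstacle, since all of the substance (the generic-algebra identification and the use of linear reductivity via the Reynolds operator, following Procesi's three points) is already contained in the preceding theorem.
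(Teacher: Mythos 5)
Your proof is correct and takes essentially the same route as the paper, which derives the corollary simply by noting that $G=GL_{n_1}\times\cdots\times GL_{n_t}$ gives $A=F^t$, so that "$A$-algebra" becomes "$E$-algebra" and the preceding embedding theorem applies directly, with Theorems~\ref{thm:d3} and~\ref{thm:d4} supplying the identification of the $E$-trace ideal of $M_n(F)$ with the one generated by the Cayley--Hamilton data. Your observation that the pure-to-mixed passage via $f\mapsto tr(fx_{k+1})$ is the one non-trivial bookkeeping step is a fair reading of what the paper tacitly invokes when it cites Theorem~\ref{thm:d4} alongside Corollary~\ref{cor:2.4}.
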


\end{document}